\newtheorem{theorem}{Theorem}[section]
\newtheorem{proposition}[theorem]{Proposition}
\newtheorem{lemma}[theorem]{Lemma}
\theoremstyle{definition}
\date{May 2010}
\title{Singular sets of holonomy maps for algebraic foliations}
\author{G. Calsamiglia, B. Deroin, S. Frankel \& A. Guillot}
\begin{document}
\maketitle
\begin{abstract}
In this article we investigate the natural domain of definition of a
holonomy map associated to a singular holomorphic foliation of
$\mathbb P^2$. We prove that germs of holonomy between algebraic
curves can have  large sets  of singularities for the analytic
continuation. In the Riccati context we provide examples with natural
boundary and maximal sets of singularities. In the generic case we
provide examples having at least a Cantor set of singularities and
even a nonempty open set of singularities.  The examples provided are
based on the presence of sufficiently rich contracting dynamics in the
holonomy pseudogroup of the foliation.
\end{abstract}


\section{Introduction}


Let $\mathcal F$ be a singular holomorphic foliation of the complex projective plane. The leaves of the foliation may wind around  the plane in a very complicated manner. A standard approach to
understanding  such phenomena is to consider a real two dimensional subspace, $T$,
which intersects every leaf, and to study the equivalence relation on $T$ whose classes
are the intersections with each leaf. This  will be referred to as the \textit{holonomy relation}.

We classically take $T$ to be a compact (real) surface transverse to the foliation, and  in this case  one has  a refined construction whereby  leafwise paths with endpoints in $T$ determine, by lifting to nearby leaves, a pseudo-group acting on $T$, called the \textit{holonomy pseudo-group}, whose orbits are the holonomy classes. The study of this pseudo-group is central in foliation theory, insofar as it provides the dynamics  of the foliation, as well as some useful  geometric information.

As we shall see, a very natural choice for $T$ in our situation is to consider an algebraic curve, with finite tangency to $\mathcal F$. In fact   a generic curve $T$ intersects every leaf as follows from the maximum principle of~\cite{CLS}.  In this context,  holonomy classes are no longer the orbits of a pseudo-group of diffeomorphisms, due to the tangency of $T$ with the leaves. However, the situation is not so bad: the equivalence classes of the holonomy relation can still be described in terms of
a set of local holomorphic correspondences   on $T$, the algebroid class of Painlev\'e, see~\cite{Painleve},\cite{Loray}.

There is a  special class of examples, which is  particularly  useful for testing any property of plane foliations, namely the Riccati differential equations, defined in affine coordinates $x,y$ by
 $dy = \omega _0 + \omega_1 y + \omega_ 2 y^2$, where the $\omega_i$ are rational $1$-forms in the $x$-variable. It is well-known that there is associated to this equation  a meromorphic flat connection on the fibration $(x,y) \mapsto x$, which   compactifies to a $\mathbb P ^1$-fibration (over a compact base space); the leaves of the foliation are, apart from some special vertical  fibers, the flat sections of the connection. Thus, if one chooses $T$ to be a vertical line, the holonomy classes are   orbits of a finitely generated group of automorphisms of $\mathbb P^1$, known as the monodromy group.

We cannot hope, for a general algebraic foliation of the plane, that the holonomy relation can be realized as a genuine group of automorphisms of some Riemann surface transverse to the foliation, even after allowing for
some (algebroid) ramified semi-conjugacy. Indeed, such a group necessarily preserves a projective structure on the surface (by uniformization), and it is well-known that the holonomy pseudo-groups of generic plane algebraic foliations are not this rigid; for instance,  given such a transversely projective structure  there must be a leaf that is contained in
an invariant algebraic curve, since $\mathbb P^2$ is simply connected, and this  is non-generic by Jouanolou's theorem \cite{Jouanolou} (see also~\cite{BLL}).

It
is possibly interesting,
 for the geometry  and dynamics of foliations, to consider weaker
analogues of
product covering structures,
with respect to an algebraic fibration $\mathbb P^2 \rightarrow \mathbb P^1$, (or even a
smooth transverse plane field). In other words, to find a
lifting of paths in the leaves (resp. in the fibres) to paths in other leaves (resp. in other fibres)
which has good extension properties.  In this context it is natural to consider the problem of analytic  extensions of holonomy germs between fibres, the candidate germs for extension of holonomy being a priori analytic. This also naturally  justifies the  generalization from holonomy diffeomorphisms to   algebroid holonomy correspondences.   The extension problem may be expressed, in turn, in terms of
the maximal domain of  a holonomy correspondence defined  as an analytic multivalued map on a fibre $T$. In this vein, Loray conjectured in~\cite{Loray} (see also Ilyashenko's problem paper~\cite{Il}, especially Problem 8 and Problem 8.8 in~\cite{Il1}) that the analytic continuation of such a germ is possible along any path in $T$ that avoids a countable set of points, the singularities.


In his thesis~\cite{Marin}, Mar\'{\i}n found explicit examples where this conjecture can be verified.
In fact, his family can be extended to the family of all foliations defined by meromorphic closed $1$-forms on $\mathbb P^2$. These admit a multivalued holomorphic first integral whose monodromy preserves a Riemannian metric, and which allows to define, on each non-invariant algebraic curve $T$, a singular metric invariant by the holonomy pseudo-group. The problem of analytic continuation then reduces to
 the completeness of this metric.

The main result of our work is that,
on the other hand,
in the presence of rich contracting dynamics, which is known to be a generic property, there are special choices of the algebraic curve $T$ such that some holonomy correspondence has a maximal domain of definition with  large boundary.  We produce here examples whose boundaries contain Cantor sets of positive dimension, and even open sets. This disproves Loray's conjecture, but does not
exclude refining the
 problem to that of finding a
better choice for $T$.

These
choices of $T$,  which would seem pathological in view of the aforementioned conjectures,
  reflect the fact that foliations with sufficienly rich holonomy have special transversals,
 called \textit{diagonals}, indicating perhaps
a hidden duality  involving
 the holonomy action on a   transversal, and
the pseudo-group of transition functions along the leaves.
More precisely, we find some part of the foliation which can be described as a suspension, where the monodromy has a non-trivial domain of discontinuity when acting on the transversal space, with quotient   identifying with the base (and the diagonal).
So it is not yet clear whether the choices of diagonal $T$  should be regarded as  counterexamples
in a project aiming at finding fiber-like transversals, or rather viewed as an important aspect  of
the structure of $\mathcal F$.

 Basic examples of these non-fiber-like
transversal lines
arise noteably in the setting of
Riccati equations,
where, as explained above,
we have  a suspension foliation with linear fibers,
and thus no lack of good transversal lines, realizing holonomy as a group of automorphisms.


\newcommand{\del}{\partial}

The
most explicit
case
of a non-extendable holonomy map  that arises
is
the universal covering of the thrice punctured sphere,
$\mathbb H \subset \mathbb P^1 \rightarrow
\mathbb P^1\setminus \{0,1,\infty\} = \Sigma$,
realized as   holonomy   in
$S=(\widetilde{\Sigma} \times \mathbb  P^1)/\pi_1(\Sigma)$ ;
basically, at the level of the universal cover there is a
product foliation $\mathbb H\times \mathbb P^1$, and the
holonomy map goes from the disc $\{*\}\times \mathbb H$ in the
transversal fiber $\{*\}\times \mathbb P^1$ to the
(linear)
transversal diagonal in $\mathbb H\times \mathbb H$. This
illustrates the
non-trivial nature
of the choice of $T$,
(even  when constrained to be linear as in \cite{Loray})
and suggests the need for  further research.

Recall that this  suspension construction,  for
a circle bundle over a Riemann surface
 is also the most basic source of examples of
 Levi-flat hypersurfaces.  This is no coincidence;
the singular sets for holonomy extension in this article
 are
associated to
laminations
within the holomorphic foliation,
(albeit laminations   with  real 1-dimensional leaves,
modelled on the product of a Cantor set by a tree,
in the generic context of Theorem~\ref{thm:cantor}).
We recall
that such suspensions and diagonals
have
also
been considered
from the viewpoint of
 Stein spaces and harmonic sections
by~\cite{D-O}.
The
birational maps that we realize of such suspensions into
$\mathbb P^2$ thus
provide   singular  Levi-flats with rich
dynamics in $\mathbb P^2$, as well as interesting examples of
 Stein spaces, and may provide further insight
into the general properties of Levi-flats  in $\mathbb P^2$.




Another interesting problem is to determine the difference
between the Riemann surface of a holonomy correspondence
obtained by analytic continuation, and the maximal domain where
the map is a holonomy map. Extending work of Mar\'{\i}n~\cite
{Marin}, we find particularly simple foliations defined by
closed $1$-forms, where for any holonomy germ these two domains
coincide, up to a countable number of points.
 However, it is conceivable that there exists a holonomy
correspondence whose domain of holonomy is considerably
smaller: consider $k$-fold
coverings
ramified along the diagonal,  $\Delta$ of
$S_+=(\widetilde{\Sigma} \times \mathbb  H^2)/\pi_1(\Sigma)
\subset S$ ;   (taking $k=$  the Euler class of the circle bundle  $=c_1(N\Delta)$, up to sign, ensures existence)
 or of any such suspension type disc bundles with  holomorphic
diagonal,
and extend a bit in $S$, beyond the Levi-flat boundary
$\del S_+$.
Then the leaf space of the cover over $S_+$ is just the disc;
i.e. the diagonal (in the cover), or the fiber
$\mathbb  H^2$ in $S_+$, but on the part of the cover beyond
$S_+$ the leaf space of the cover is
the $k$-fold cover of the annulus
outside of
$S^1=\del \mathbb   H^2 \subset \mathbb  P^1$.
Unfortunately we are unable at this time
 to provide such   examples in the algebraic context.
This non-Hausdorff leaf space structure is closely related to
considerations in the real codimension one case as studied
by Palmeira, see~\cite{Palm}, but it is not at all
clear what analogous structures one might find for
singular holomorphic foliations of the complex projective
plane.

While one might
   be tempted by a certain optimism
as regards the bridges connecting
analytic holonomy extension to
techniques from the topology of real  codimension one foliations,
or the PDE theory of holomorphic extensions,
we should add that the subject seems to
be fraught with as many hidden traps as
  temptations, and should be
approached
with a certain degree of caution.


\subsection{Statement of results}

Let~$(C_0,p_0)$ and~$(C_1,p_1)$ be two pointed complex curves and let $h:(C_0,p_0)\to(C_1,p_1)$ be a germ of holomorphic function. A point~$q\in C_0$ is called a \textit{singularity of $h$} if there exists a path~$\tau:[0,1]\to C_0$ such that~$\tau(0)=p_0$, $\tau(1)=q$ and such that~$h$ admits an analytic
continuation along~$\tau([0,1))$ by germs of holomorphic maps from $C_0$ to $C_1$ but not along~$\tau([0,1])$ (see \cite{Loray}). The set of singularities for $h$ could, in principle, be any subset of $C_0$. If it is the whole of $C_0$ we will say that $h$ has \emph{full singular set}.

There may also exist an open set $D\subset C_0$ containing~$p_0$, such that for any path $\tau:[0,1]\to C_0$ starting at $p_0$ and such that~$\tau(1)\in\partial D$, $\tau^{-1}(D)=[0,1)$, the function $h$ has an analytic continuation along~$\tau([0,1))$ but does not have an analytic continuation along~$\tau([0,1])$. In the case where $\partial D$ is a topological circle, the analytic extension of $h$ is said to have $\partial D$ as its \textit{natural boundary}.


In this article, we investigate the set of singular points, of the germ of a holonomy map (or holonomy germ),  with domain
and range lying within algebraic curves, and that is associated to a given singular algebraic foliation $\mathcal F$ of the complex projective plane $\mathbb P^2$. We recall here the definition of holonomy germ. Let $C_0, C_1$ be algebraic curves of $\mathbb P^2$, and $p_0\in C_0$, $p_1\in C_1$ be points belonging to the same leaf $\mathcal{F}_{p_0}$ through $p_0$. We suppose that for $i=0,1$, $p_i$ does not belong to the singular set of $\mathcal F$, and that the curve $C_i$ is transverse to $\mathcal F$ at $p_i$. Consider a leafwise path $\gamma : [0,1]\rightarrow \mathcal{F}_{p_0}$  ,
such that $\gamma(0) = p_0$ and $\gamma (1) = p_1$. Then, one can find a continuous family of leafwise paths $\gamma^p$, parameterized by a point $p\in C_0$ close enough to $p_0$, such that $\gamma^p (0) = p$, $\gamma(1)$ belongs to $C_1$, and $\gamma^{p_0} = \gamma$. The germ of the map $p\mapsto h_{\gamma} (p)= \gamma^p (1)$ is uniquely determined by the relative (ie with fixed endpoints)  homotopy class of $\gamma$ under the above conditions, and is called the holonomy germ associated to $\gamma$. We will call such a germ between algebraic curves an \textit{admissible germ} for $\mathcal{F}$.

 \begin{theorem}
 \label{thm:countablesing}
   Let $\mathcal{F}$ be a singular foliation given by a closed meromorphic 1-form on $\mathbb P^2$. Then the set of singularities of an admissible germ for $\mathcal{F}$ between rational curves is at most countable.
 \end{theorem}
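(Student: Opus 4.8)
The plan is to exploit the multivalued first integral and the invariant metric it induces on $T$. Let $\omega$ be the closed meromorphic $1$-form defining $\mathcal F$. Locally away from its zeros and poles, a primitive $f=\int\omega$ is a multivalued holomorphic first integral of $\mathcal F$; its monodromy is a representation of $\pi_1(\mathbb P^2\setminus(\text{poles}\cup\text{zeros of }\omega))$ into the affine group $z\mapsto az+b$ (with $a$ a period ratio of type $\exp(2\pi i\,\mathrm{Res})$, hence in a fixed finitely generated multiplicative group), because $\omega$ itself is single-valued and the ambiguity of $f$ is by translations and by multiplication coming from logarithmic poles. Restricting to a rational curve $C_i$ transverse to $\mathcal F$, the form $\omega|_{C_i}$ is a meromorphic $1$-form on $\mathbb P^1$, so $df_i := \omega|_{C_i}$ defines, on the complement of its finitely many zeros and poles, a flat metric $|df_i|$ on $C_i$ with isolated cone-type singularities of angle $2\pi(k+1)$ at zeros of order $k$ and half-infinite ends (or finite-area cusps) at the poles, according to the order and residue of the pole. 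The key point is that a holonomy germ $h\colon(C_0,p_0)\to(C_1,p_1)$ satisfies $f_1\circ h = a\, f_0 + b$ for a constant $a$ in the period group and some $b$, simply because both sides are leafwise constants expressing the first integral; hence $h$ is, in the flat coordinates $f_0,f_1$, an affine map $w\mapsto aw+b$, and in particular $h$ is an \emph{isometry up to the fixed scalar $|a|$} between the flat metrics.

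Next I would make precise the analytic-continuation statement in these terms. If $\tau\colon[0,1)\to C_0$ is a path along which $h$ continues, then in flat coordinates the continuation is forced to remain the affine map $w\mapsto aw+b$, so the only obstruction to continuing along $\tau$ is that the image path $f_1(h(\tau(t))) = a f_0(\tau(t))+b$ runs off the metric completion of $(C_1,|df_1|)$, or that $\tau$ itself hits a zero/pole of $\omega|_{C_0}$, i.e.\ a cone point or an end of $(C_0,|df_0|)$. Thus a singularity $q\in C_0$ of $h$ is one of the finitely many zeros or poles of $\omega|_{C_0}$, \emph{or} a point $q$ such that, approaching $q$ along some path, $f_0$ has a well-defined limit $w_0$ (finite, so $q$ is a regular point of $\omega|_{C_0}$ at finite flat distance) but $aw_0+b$ is \emph{not} in the image of $f_1$ on any neighborhood extendable inside $C_1$ --- concretely, $aw_0+b$ is (the $f_1$-value of) a zero or pole of $\omega|_{C_1}$, or lies outside the flat-developed image of $C_1$ altogether. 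Since $\mathbb P^1$ minus finitely many points has a flat structure whose developing map to $\mathbb C$ (or to the appropriate quotient by the affine monodromy) has image with boundary a \emph{countable} union of analytic arcs — indeed, the only places where the flat structure fails to extend are over the finitely many cone points and ends, and their full preimages in the universal cover of $C_1\setminus\{\text{zeros,poles}\}$ form a countable set of points together with a countable family of geodesic rays emanating from them — pulling this countable set back by the affine map $w\mapsto (w-b)/a$ and then by the (countably-to-one) developing map $f_0$ of $C_0$ yields a countable subset of $C_0$ containing all singularities of $h$.

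The main obstacle, and the step requiring care, is the claim that the set of ``bad'' target values in $C_1$ — those $f_1$-values $w_1$ for which no local holonomy correspondence continues past the corresponding point — is countable rather than merely closed and nowhere dense. This is where one must use that $C_1$ is \emph{rational}: $C_1\setminus\{\text{zeros and poles of }\omega|_{C_1}\}$ is a finitely-punctured sphere, its universal cover has a discrete (in fact finitely generated) fundamental group acting by deck transformations, and the developing map of the flat metric is a concrete multivalued function (essentially $\int\omega|_{C_1}$ composed with the monodromy) whose only singularities over the base are the finitely many punctures; the total preimage of ``finitely many points'' under a covering of a finitely-punctured sphere is countable, and the attached geodesic rays — one for each puncture and each sheet — also form a countable family, each ray being a single real-analytic arc. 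One then needs the elementary observation that a point $q\in C_0$ is a singularity of $h$ only if its $f_0$-value lands on this countable configuration, which follows because along any path avoiding the configuration the affine continuation is unobstructed. Assembling these pieces, and noting that the genus-zero hypothesis on $C_0$ and $C_1$ is used precisely to keep the monodromy groups finitely generated (so their orbits of finite sets are countable), completes the argument; the case of higher-genus $C_i$ would a priori allow uncountable limit sets and is exactly what the theorem excludes.
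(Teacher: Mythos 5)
Your approach is genuinely different from the paper's. You are following what the paper, in its introduction, attributes to Mar\'{\i}n: use the multivalued first integral $\int\omega$ to endow each transversal rational curve with a flat (translation) structure in which holonomy becomes an isometry, and then analyze the developing map of that structure. The paper explicitly says it prefers \emph{not} to use this method: instead, after establishing that $h_\gamma^*\omega_1 = \omega_0$ (via Stokes on an $\mathcal F$-homotopy between $\tau_0$ and $\tau_1=h_\gamma\circ\tau_0$), it writes $\omega_0 = R(x)\,dx$, $\omega_1 = S(y)\,dy$ with $R,S$ rational (this is exactly where rationality of $C_0,C_1$ enters), observes that $h_\gamma$ solves the separable rational ODE $dy/dx = R(x)/S(y)$, and concludes by Painlev\'e's Theorem I, which asserts that solutions of rational ODEs continue along any path avoiding a countable subset of the $x$-plane. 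Your integrated relation $f_1\circ h = f_0 + \const$ is the same input as $h^*\omega_1=\omega_0$; what differs is the machinery used to get countability.

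Two remarks. First, a minor inaccuracy: since $\omega$ is closed, the multivalued primitive $\int\omega$ has \emph{translation} monodromy (periods $2\pi i\cdot\mathrm{Res}$, not multipliers $\exp(2\pi i\,\mathrm{Res})$); so the affine relation $f_1\circ h = a f_0 + b$ should have $a=1$, and the ``period ratio'' language is misleading. This does not damage the structure of your argument, but it should be fixed. Second, and more seriously, the crux of the proof is exactly the step you label ``the main obstacle,'' and as written it is a genuine gap. You need the set of ``bad'' values in the $f_1$-plane (critical/asymptotic values of the developing map of $C_1$, together with the boundary of its developed image) to be countable, and you assert this by describing the boundary as ``a countable union of analytic arcs'' or ``a countable family of geodesic rays'' without any argument. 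That the complement of the image of the developing map of a translation structure on a finitely-punctured sphere is countable is not obvious --- a priori the developed image is just some $\Gamma$-invariant open set whose complement could be large --- and establishing this for the 1-forms at hand is precisely the analytic content that Painlev\'e's theorem packages for you. So while your route can in principle be completed (Mar\'{\i}n does something like this in special cases), the countability claim needs an actual proof, whereas the paper closes the argument cleanly by appealing to Painlev\'e. If you want to keep your metric approach, the most promising fix is to reduce, as in the paper, to the observation that $f_0$ and $f_1$ are rational (on rational curves), so that the relevant asymptotic/critical value sets are governed by classical finiteness theorems for rational functions and their multivalued primitives --- or to simply quote Painlev\'e, at which point your argument collapses to the paper's.
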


F.~Loray conjectured that the same is true for \emph{any} admissible germ associated to a holomorphic singular foliation of $\mathbb P^2 $~\cite{Loray}. The following result exhibits examples where the property fails:

\begin{theorem}
\label{thm:circle}
For any $d\in\{2,3,\ldots\}$, there exist holomorphic foliations of $\mathbb P^2$ in each of the following families:
\begin{enumerate}
\item \label{item:fuchsian} foliations  of degree $d$ having an admissible germ between lines with a natural boundary;
\item \label{item:dense} foliations  of degree $d+1$  having an admissible germ between lines with full singular set.
\end{enumerate}
\end{theorem}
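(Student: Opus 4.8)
The plan is to realize both families via Riccati foliations, exploiting the fact that for a Riccati equation the holonomy of a vertical transversal is literally the monodromy group acting on $\mathbb P^1$, so the singular set of a holonomy germ can be read off from the dynamics of this group. First I would fix a fibration $\pi:\mathbb P^2\dashrightarrow\mathbb P^1$ and seek a Riccati foliation $\mathcal F$ whose monodromy group $\Gamma\subset\mathrm{PSL}_2(\mathbb C)$ is a Fuchsian group with a prescribed limit set. For part~\eqref{item:fuchsian}, I would take $\Gamma$ to be a cocompact (or finite-covolume) Fuchsian group uniformizing a surface, so that its limit set is a round circle $S^1=\partial\mathbb H\subset\mathbb P^1$; for part~\eqref{item:dense}, I would instead take a Fuchsian-type (e.g. Schottky or more degenerate) group whose limit set is all of $\mathbb P^1$, which is why the degree goes up by one — a larger number of branch points of the Riccati connection is needed to accommodate a group with a bigger limit set. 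The degree bound $d$ versus $d+1$ should come from counting the poles of the $\omega_i$ needed to produce a group with enough generators/parabolics.

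The key construction step is to pass from the linear (fiberwise) transversal to a \emph{diagonal} transversal $T$, following the mechanism sketched in the introduction: at the level of the universal cover $\widetilde\Sigma\times\mathbb P^1$ the foliation is a product, the fiber $\{*\}\times\mathbb P^1$ is a transversal carrying the monodromy action, and the diagonal $\{(z,\iota(z))\}$ (where $\iota:\widetilde\Sigma\hookrightarrow\mathbb H\subset\mathbb P^1$ is the developing/uniformizing map) descends to a transversal curve in $S=(\widetilde\Sigma\times\mathbb P^1)/\pi_1(\Sigma)$. Then I would produce a birational map $S\dashrightarrow\mathbb P^2$ carrying this picture into an algebraic foliation, arranging that the image of the diagonal and of a fiber are algebraic \emph{lines}; this is the delicate normalization, and it is where the constraint "between lines" must be checked (one must verify the diagonal's image is rational of degree one, and that the birational model does not destroy transversality at the relevant points). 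A holonomy germ from (the image of) the fiber to (the image of) the diagonal is then, in the universal cover, the germ of the map $w\mapsto(z,w)\mapsto$ "the point where the leaf through $(z,w)$ meets the diagonal", i.e. essentially the inverse of the developing map $\iota^{-1}$ restricted near a point — a map whose analytic continuation is obstructed exactly on $\iota(\partial\widetilde\Sigma)$, the limit set of $\Gamma$.

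With that in hand the two conclusions become statements about the limit set $\Lambda(\Gamma)$. For \eqref{item:fuchsian}: $\iota$ extends to $\overline{\mathbb H}$ mapping $\partial\mathbb H$ to the round circle $S^1$, and analytic continuation of the holonomy germ along any path in the fiber line is possible precisely up to $S^1$ and no further (the developing map has a genuine boundary there because the group acts properly discontinuously on $\mathbb H$ but the quotient is compact, so no further extension is possible); one checks the open set $D$ is the image of $\mathbb H$ and $\partial D=S^1$ is a topological circle, giving a natural boundary. For \eqref{item:dense}: with $\Lambda(\Gamma)=\mathbb P^1$, every point of the fiber line is a limit point, so by the standard argument (any path can be perturbed to hit a point of the limit set, which is an obstruction to continuation because the orbit accumulates there with contraction) the singular set is the whole curve, i.e. a full singular set. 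I would invoke Theorem~\ref{thm:countablesing} only as a contrast.

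The main obstacle I anticipate is \emph{algebraicity and the "between lines" requirement}: abstractly the suspension $S$ with its diagonal gives everything, but turning it into a genuine singular foliation of $\mathbb P^2$ of controlled degree, with both the fiber-transversal and the diagonal-transversal being algebraic curves of degree one, requires an explicit birational model and a careful degree count — in particular showing the diagonal, a priori a transcendental-looking curve $\{(z,\iota(z))\}$, becomes a line in the right coordinates, and that the resulting foliation's degree is exactly $d$ (resp. $d+1$). Controlling the poles of the $\omega_i$ so that the monodromy group is exactly the desired Fuchsian group (rigidity/realization of a prescribed monodromy representation by a Riccati equation with a prescribed polar divisor, via Riemann–Hilbert) is the second technical point, but that is classical; the genuinely new work is the diagonal-to-line normalization.
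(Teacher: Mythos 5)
Your high-level strategy --- Riccati suspension, diagonal transversal, singular set read off from monodromy dynamics --- matches the paper's, and you rightly flag the ``diagonal becomes a line'' step as the crux. Two genuine gaps remain. For item~(\ref{item:dense}) you propose using a ``Fuchsian-type (e.g.\ Schottky or more degenerate) group whose limit set is all of $\mathbb{P}^1$,'' but this plan fails: a Fuchsian group has its limit set inside a circle, a Schottky group has a Cantor limit set, and in any case discreteness is not the mechanism the paper uses. Instead Lemma~\ref{lem:parabtypewithdensemonod} produces a parabolic-type projective structure whose monodromy is \emph{dense} in $\mathrm{PSL}(2,\mathbb{C})$ (hence non-discrete), and Proposition~\ref{prop:singofgerms}(\ref{item:singofgerms1}) is a quantitative contraction argument that uses density to manufacture infinite paths in $\widetilde{\Sigma}$ along which $\mathcal{D}$ converges to any prescribed $z\in\mathbb{P}^1$; the ``perturb a path to hit the limit set'' heuristic does not replace this, since there is no discrete limit-set structure to appeal to. Relatedly, your explanation of the degree jump is off: the dense case works on a $(d+2)$-punctured sphere because Lemma~\ref{lem:parabtypewithdensemonod} requires at least four punctures so that the space of parabolic-type projective structures has positive dimension, not because a ``larger limit set'' needs more poles.

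Second, you identify but do not resolve the diagonal-to-line normalization. The paper's resolution is a specific choice of compactification integers: take $n_{p_0}=0$ at one cusp and $n_p=1$ at all others. Then formula~(\ref{eq: self-intersection of diagonal}) gives $\overline{\Delta}^2 = 2 + (0-1) + \sum_{p\neq p_0}(1-1) = 1$, so the ruled surface $\overline{S}$ is the first Hirzebruch surface $\mathbb{F}_1$, whose unique exceptional $(-1)$-curve is disjoint from $\overline{\Delta}$; blowing it down carries $\overline{\Delta}$ and a generic fibre biholomorphically to two lines in $\mathbb{P}^2$, and the degree of the resulting foliation is $\mathrm{Tang}(\overline{\mathcal F},\overline{\Delta})=\sum_p n_p$ by~(\ref{eq:tangency}), which equals $d$ (resp.\ $d+1$). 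Without this choice of the $n_p$'s and the two short computations you can certify neither ``between lines'' nor the advertised degree, so the proposal as written does not close.
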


Although  \ref{item:fuchsian}) seems to be a quantitatively weaker result than \ref{item:dense}), they are qualitatively different: the points in the natural boundary are singular for \textit{any} path leading to them whereas this is certainly not the case for all  singularities in  \ref{item:dense}).
Examples of item~\ref{item:fuchsian}) will be defined by a Riccati equation with a Fuchsian monodromy group in section~\ref{s:riccati}. The natural boundary appears as the limit set of the monodromy group, and it is   topological in nature: it is not possible to extend  the holonomy germ continuously to any of its points. Examples where the natural boundary is a fractal circle can also be constructed by using quasi-Fuchsian monodromy groups. The examples of item~\ref{item:dense}) are defined by using Riccati equations 
with   monodromy group dense in Aut$(\mathbb P^1)$.

Note that the natural boundary here
is associated to
the Levi-flat boundary of a domain  in $\mathbb P^2$.
The constructions also provide examples of
topologically trivial, but analytically
non-trivial
deformations of foliations of $\mathbb P^2$ also admitting  Levi-flat hypersurfaces.
They are obtained by varying the moduli of
the set of punctures of the base space of the fibration, with respect to which the foliations are Riccati,
or by the   corresponding changes of moduli of the action on the fibre.


We point out that we do not know whether or not there exists a holomorphic foliation of the projective plane and an admissible germ from an algebraic curve to \textit{itself} which has a natural boundary. Our next result states that, however, part of the phenomena displayed by the foliations in the first item of Theorem~\ref{thm:circle} are well behaved under small perturbations of the transversal curves where the germs are defined, and in particular permits to construct such an admissible germ (i.e. from a line to itself) with a large singular set:

\begin{theorem}\label{thm:ricperturbed} There exists a foliation~$\mathcal{F}$ in~$\mathbb{P}^2$ and an open subset of pairs of lines~$P\subset(\mathbb{P}^2)^*\times (\mathbb{P}^2)^* $, intersecting the diagonal, such that for every~$(L_0,L_1)\in P$ there is an admissible germ from~$L_0$ to~$L_1$ whose analytic extension has a curve of singularities (in particular, an uncountable set).
\end{theorem}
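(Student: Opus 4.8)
The construction starts from the Fuchsian Riccati foliation $\mathcal{F}$ of item~\ref{item:fuchsian}) of Theorem~\ref{thm:circle}, or a close variant of it. Recall that $\mathcal{F}$ is the birational image in $\mathbb{P}^2$ of a suspension $S=(\mathbb{H}\times\mathbb{P}^1)/\Gamma$, with $\Gamma$ a Fuchsian group of the first kind acting on $\mathbb{H}$ as the deck group of $\mathbb{H}\to\Sigma:=\mathbb{H}/\Gamma$ and on the fibre $\mathbb{P}^1$ through the inclusion $\Gamma\subset\mathrm{Aut}(\mathbb{P}^1)$. This foliation carries an invariant \emph{real-analytic} Levi-flat hypersurface $M\subset\mathbb{P}^2$, the image of $(\mathbb{H}\times\partial\mathbb{H})/\Gamma$, whose leaves are the leaves of $\mathcal{F}$ suspending the minimal action of $\Gamma$ on its limit circle $\partial\mathbb{H}$. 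The germ $h_0$ of Theorem~\ref{thm:circle} with natural boundary goes between two lines $T_0,T_1\subset\mathbb{P}^2$ and, in appropriate coordinates, is the covering map $\mathbb{H}\to\Sigma$, whose maximal domain of definition has boundary the circle $T_0\cap M$. What matters below is that $M$ is determined by $\mathcal{F}$ alone, not by the transversal lines.

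\smallskip
\noindent\textit{Perturbing the transversals.} Let $L_0,L_1$ be lines transverse to $M$ — an open and dense condition on a line, since $M$ is a real hypersurface of $\mathbb{P}^2$; then each $L_i\cap M$ is a real-analytic curve. Transporting the leafwise path defining $h_0$ to nearby leaves produces a holonomy germ $h\colon L_0\to L_1$ which, expressed through developing data, is a composition $\beta\circ(\mathbb{H}\to\Sigma)\circ\alpha$ of the covering map with germs of biholomorphisms $\alpha,\beta$ comparing $L_0,L_1$ with the transversal coordinate of the suspension. Hence the singular set of $h$ contains $\alpha^{-1}$ of the natural boundary of the covering map, namely a real-analytic arc of $L_0\cap M$ (in particular an uncountable set). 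That these are genuine singularities, and not merely points where $h$ ceases to be realized by the foliation, follows from a local analysis near $M$: a path in $L_0$ reaching $L_0\cap M$ pushes the relevant leaves onto the suspension lamination of $\Gamma\curvearrowright\partial\mathbb{H}$, where minimality of the action forbids analytic continuation exactly as it does for the modular function. Since $M$ is a fixed real-analytic invariant Levi-flat, the whole discussion is stable under small motions of $L_0$ and $L_1$, giving an open set of admissible pairs — but a neighbourhood of $(T_0,T_1)$ with $T_0\neq T_1$, which does not yet meet the diagonal.

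\smallskip
\noindent\textit{Reaching the diagonal, and the main obstacle.} To place a diagonal pair in the open set one must realize a holonomy germ from a line $L$ \emph{to itself}, which requires the defining leafwise path to be a loop returning $L$ to $L$, hence a generic leaf of $\mathcal{F}$ to meet $L$ in at least two points. I would therefore arrange — choosing $\Sigma$, the Fuchsian representation and the birational model $S\dashrightarrow\mathbb{P}^2$, still producing a foliation of $\mathbb{P}^2$ with an invariant Levi-flat $M$ of the same type — that a generic line pulls back to a \emph{multisection} of the Riccati fibration; following a leaf from one sheet of $L$ over a base point to another then gives a non-trivial self-holonomy germ $g\colon L\to L$, and the developing-data computation above, carried out with the sheet-permutation taken into account, again places a real-analytic curve of singularities inside $L\cap M$. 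Openness of transversality to $M$ and persistence of the returning-loop configuration under perturbation then yield the required open set $P\subset(\mathbb{P}^2)^*\times(\mathbb{P}^2)^*$, which by construction contains a diagonal pair $(L,L)$. The crux of the argument is precisely this last step: one must check that for the self-germ $g$ the modular-type obstruction along $L\cap M$ is \emph{not} cancelled by the extra monodromy picked up on going around in the leaf — this cancellation being exactly the mechanism by which the full natural boundary of item~\ref{item:fuchsian}) of Theorem~\ref{thm:circle} degrades, for transversals in general position, to merely an uncountable singular set. Certifying that at least a curve of singularities survives, uniformly over an open family of pairs of lines, is the heart of the proof.
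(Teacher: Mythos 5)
Your basic geometric picture — that the singular set should sit on the intersection of the perturbed transversal with the invariant Levi-flat hypersurface coming from $(\mathbb{H}\times\partial\mathbb{H})/\Gamma$ — is consonant with what the paper does, but the mechanism you propose for reaching the diagonal and for certifying that the singularities persist is not carried out, and the paper's route is different and simpler. The paper works with the explicit Fuchsian Riccati foliation (equation~(\ref{eq:ric-original}) with $\lambda=0$) and realizes the holonomy germ as a composition of \emph{three} holonomy maps: $g^{-1}\colon C_{q,\epsilon'}\to\{t=q\}$, then $h|_{h^{-1}(K)}\colon\{t=q\}\to K\subset\Delta$ (the universal-covering germ with natural boundary $\partial D$, restricted to the preimage of an annulus $K$ around the punctures), then $f\colon K\to C_{p,\epsilon}$, where the $C_{p,\epsilon}=\{x-yp-\epsilon=0\}$ form a two-complex-parameter family of lines whose strict transforms approximate $\overline{\Delta}$. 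Because $f$ and $g$ are genuine holonomy biholomorphisms defined on neighbourhoods of the relevant sets, the singularities of the middle factor along $\partial D$ are simply transported to the curve $g(\partial D)\subset C_{q,\epsilon'}$ — there is no possibility of ``cancellation'' and no delicate local analysis near the Levi-flat is needed. The diagonal case is then immediate, not by forcing a multisection: after blowing up the origin, every line of $\mathbb{P}^2$ not through it already becomes a \emph{section} of the Hirzebruch surface (not a multisection), and one simply takes $(q,\epsilon')=(p,\epsilon)$ so that source and target coincide, with the leafwise loop passing through the fiber $\{t=q\}$ and the diagonal $\Delta$.

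This brings out the two concrete problems with your proposal. First, the ``multisection'' idea is a misdiagnosis: a self-holonomy germ does not require the line to be a multisection of the Riccati fibration, only that the concatenated leafwise path return to the line, which happens automatically once both endpoints of the composed holonomy are taken in the same two-parameter family of perturbed lines; in fact the lines involved are sections after blowup. Second, you yourself flag as ``the heart of the proof'' the need to certify that the modular-type obstruction is not cancelled along the loop — but you leave this open. That step is precisely what the composition argument settles: the asymptotic-value obstruction lives in the middle factor, and the outer holonomy biholomorphisms $f$, $g^{-1}$ preserve it. As written, your proposal identifies the right actors (Fuchsian suspension, Levi-flat, perturbed transversals) but does not close the argument, whereas the paper's explicit composition and explicit coordinate model make openness, the curve of singularities, and the diagonal pair all visible at once.
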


The foliations in Theorems~\ref{thm:circle} and~\ref{thm:ricperturbed} are very special examples, for which we are even able to provide precise equations. Our next results concern the behaviour of admissible germs of most algebraic foliations of the plane. We prove that for a generic foliation, it is always possible to find an admissible germ from an algebraic curve to itself, whose set of singularities for the analytic continuation is a large set, i.e. of positive dimension, and in particular uncountable.

Recall that Jouanolou~\cite{Jouanolou} proved that there is an open and dense set in the quasi-projective manifold of degree $d$ foliations of $\mathbb P^2$ such that every foliation belonging to that set has no algebraic curve. It is rather classical to show that there is also a dense open set such that every foliation belonging to this set has only singularities with $\lambda_2/\lambda_1 $ non real, where $\lambda_1,\lambda_2$ are the eigenvalues (see~\cite{LinsNeto-Pereira}). We will prove that, for a foliation which belongs to these two sets, we can always find a holonomy germ from an algebraic curve to itself  whose set of singularities is large:

\begin{theorem} \label{thm:cantor}
Let $\mathcal{F}$ be a singular holomorphic foliation of $\mathbb P^2$ whose singularities are of hyperbolic type and with no invariant algebraic curve. Then there exists an admissible germ from a line to an algebraic curve whose singularity set for the analytic continuation contains a Cantor set.
\end{theorem}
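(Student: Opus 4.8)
The plan is to locate inside the holonomy pseudogroup of $\mcF$ a finitely generated subpseudogroup that is a conformal iterated function system with a Cantor attractor, and then to realize that attractor as the singularity set of a holonomy germ issued from a line — in the spirit of the ``diagonal'' transversal phenomenon of the introduction, the Cantor set playing the role that the Fuchsian limit set plays for the thrice‑punctured sphere. First I would use hyperbolicity to manufacture a contraction. Fix a singular point $s$ of $\mcF$; by hypothesis its eigenvalue ratio $\lambda=\lambda_2/\lambda_1$ is not real, so $s$ lies in the Poincar\'e domain and is non-resonant, hence $\mcF$ is holomorphically linearizable near $s$, with local separatrices $\Sigma,\Sigma'$. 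Choosing a point $q\in\Sigma$ close to $s$ and a generic line $L_0$ through $q$ (automatically transverse to $\mcF$ at $q$), the holonomy of $\Sigma$ around $s$ acts on a small transversal disc $T\subset L_0$ through $q$ as a linear map of multiplier $e^{2\pi i\lambda}$; after exchanging $\Sigma$ with $\Sigma'$ if necessary (so replacing $\lambda$ by $1/\lambda$) this is a genuine contraction $g$ fixing $p:=T\cap\Sigma$, and $g^{m}$ contracts by as small a factor as we like.

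Next I would produce a second, dynamically independent contraction. Because $\mcF$ has no invariant algebraic curve it carries no transversely projective structure — such a structure on the simply connected $\bbP^2$ would produce, via uniformization, an invariant algebraic curve, as recalled above — so its holonomy pseudogroup is non-rigid; containing the hyperbolic germ $g$ (and, if needed, the hyperbolic germs attached to the other singularities, of which there are at least $d^{2}+d+1$), it has rich contracting dynamics in the sense of the structure theory of pseudogroups of local biholomorphisms, and in particular there is a holonomy germ $\phi$ of $\mcF$ defined at $p$ with $\phi(p)\neq p$. Set $g_1:=g^{m}$ and $g_2:=\phi\, g^{m}\,\phi^{-1}$, a contraction with fixed point $\phi(p)\neq p$. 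Replacing $g_1,g_2$ by sufficiently high iterates one finds a disc $D\subset T$ with $\overline{g_1(D)}$ and $\overline{g_2(D)}$ disjoint and contained in $D$, so that $\{g_1,g_2\}$ is a conformal iterated function system; its attractor $K=\bigcap_{n\ge 1}\bigcup_{w\in\{1,2\}^{n}}g_w(D)$ is a Cantor set, and $\Gamma:=\langle g_1,g_2\rangle$ — free of rank two — acts (as a pseudogroup of local biholomorphisms of a neighbourhood of $D$ in $L_0$) with limit set $K$ and properly discontinuously on the connected open set $D\setminus K$.

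To conclude, I would observe that the leaves carrying the loops that realize the words of $\Gamma$ sweep out an open set $U\subset\bbP^2$ on which $\mcF$ is a suspension, with monodromy $\Gamma$ acting on a transversal fibre $F\subset L_0$ containing $D\setminus K$; since the discontinuity domain $\Omega\supseteq D\setminus K$ includes tautologically and $\Gamma$-equivariantly into $F$, the image of $\{(z,z):z\in\Omega\}$ descends to a ``diagonal'' transversal $\Delta$ of $\mcF|_U$, and the holonomy from $F\subset L_0$ onto $\Delta$ is, in suitable coordinates, the natural projection $\pi_\Gamma:\Omega\to\Omega/\Gamma$ (equivalently, a developing/coding map of the iterated function system). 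Composing $\pi_\Gamma$ with a holonomy germ from a point $p_1\in\Delta$ to a transverse point $p_1'$ of a generic line $C_1$ meeting the leaf through $p_1$ — such a line exists and meets every leaf by the maximum principle of~\cite{CLS}, and one chooses $C_1$ and the connecting path so that this last germ continues unobstructed along the relevant paths in $\Delta$ — gives an admissible germ $h:(L_0,p_*)\to(C_1,p_1')$ defined at any $p_*\in D\setminus K$, which is the projection $\pi_\Gamma$ followed by a local biholomorphism. A $\Gamma$‑orbit in $\Omega$ accumulates on all of $K$, so in every neighbourhood of every point $q\in K$ the germ $h$ attains some fixed value infinitely often, whereas a non-constant holomorphic germ attains a non-critical value only finitely often near its centre; hence $h$ admits no holomorphic extension across any point of $K$, although it continues along every path in the connected set $D\setminus K$. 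Therefore the singularity set of $h$ contains the Cantor set $K$.

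The crux is the construction of the second, independent contraction — equivalently, of the ping‑pong pair — from the single structural hypothesis that $\mcF$ has no invariant algebraic curve: one must extract from non-algebraicity the quantitative fact that the holonomy pseudogroup contains a germ moving the separatrix point $p$, presumably by exploiting the interaction, forced by non-algebraicity, between the separatrices of the various hyperbolic singularities, and one must check that $g_2=\phi g^{m}\phi^{-1}$ remains a contraction on a disc shared with $g_1$. A further technical task is to set up the suspension–diagonal picture precisely — identifying $\Delta$ with an honest transversal surface in $\bbP^2$, arranging the target $C_1$ to be an algebraic curve without introducing spurious singularities before reaching $K$, and verifying that non-continuability of $h$ across $K$ is exactly the impossibility of completing the underlying leafwise deformations, which is the conceptual analogue of the incompleteness of the hyperbolic metric at a Fuchsian limit set.
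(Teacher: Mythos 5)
Your high-level plan---build two disjointly-contracting holonomy maps $h_1,h_2$ on a disc in a line, take the Cantor attractor $\Lambda_H$ of the semi-group they generate, and exhibit a holonomy germ issued from that line whose singularity set contains $\Lambda_H$---is indeed the architecture of the paper's proof (Lemmas~\ref{lem:schottky} and~\ref{lem:limit}, then Proposition~\ref{p:F-homotopy}). But two of the key steps you leave as ``the crux'' and ``a further technical task'' are exactly where the real work lies, and as stated your route through them does not go through.

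First, the production of the second contraction. You propose to start from the separatrix holonomy $g$ of one hyperbolic singularity and then conjugate, $g_2=\phi\,g^m\,\phi^{-1}$, appealing to ``non-rigidity'' for the existence of a germ $\phi$ moving the fixed point $p$. The hypothesis of no invariant algebraic curve does not by itself put any particular singularity's separatrix inside a dynamically rich region, and ``non-rigid pseudogroup'' is not a lemma one can cite. The paper instead works with a pseudo-minimal set $\mathcal M$: if $\overline{\mathcal M}$ meets a singularity, one gets a contracting holonomy along a separatrix contained in $\mathcal M$; if not, $\mathcal M$ is an exceptional minimal set and one invokes Bonatti--Langevin--Moussu for the hyperbolic holonomy. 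Crucially, the absence of an invariant algebraic curve is what makes $\mathcal M$ transversally perfect, and that perfectness is what supplies the second map ($h_2=\psi\circ h_\gamma^n$ for a suitable holonomy $\psi$). One also needs both $h_1$ and $h_2$ to be carried by holonomy tubes (not merely germs); that extra structure is used later and does not come for free from a germ-level construction.

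Second, and this is the more serious gap, the ``suspension--diagonal'' step. You assert that the leaves realizing the words of $\Gamma=\langle g_1,g_2\rangle$ sweep out an open $U\subset\mathbb P^2$ on which $\mathcal F$ is a suspension, with a domain of discontinuity $\Omega$, an equivariant inclusion, and a diagonal transversal $\Delta$ onto which the holonomy is a quotient map $\pi_\Gamma\colon\Omega\to\Omega/\Gamma$. For a general foliation of $\mathbb P^2$ there is no such suspension structure; the product picture that makes the Riccati argument in Section~\ref{s:riccati} work is special to that setting and was one of the explicit motivations for the theorem. The paper's substitute is Proposition~\ref{p:F-homotopy}: one builds a $\rho$-equivariant continuous map $F_0\colon T\to U_H=B\setminus\Lambda_H$ from the dyadic rooted tree $T$ (the Cayley graph of the free semi-group), a $\pi_1(Q)^+$-invariant map $F_1\colon T\to C$ into an algebraic curve $C$ factoring through a bouquet $Q$ of two circles, and an $\mathcal F$-homotopy between $F_0$ and $F_1$ obtained by concatenating the holonomy tubes of $h_1^{-1},h_2^{-1}$ with a basic leafwise homotopy $\gamma=(\gamma^1,\gamma^2)$. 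Obtaining $C$ algebraic requires the separate approximation Lemma~\ref{lem:approx} (Fourier truncation plus smoothing $\{f_1f_2=\delta\}$), not just a generic choice of line as in your last paragraph. Also note the paper's use of the continuity of the leafwise Poincar\'e metric (Lins~Neto) to make the geodesic tubes, hence the $\mathcal F$-homotopy, continuous; your proposal doesn't address continuity at the tree's vertices.

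Finally, your non-extendability argument---that $h$ ``attains some fixed value infinitely often'' near $q\in K$ and therefore admits no extension---presupposes the incorrect covering/quotient picture, and it addresses extension to an open neighbourhood rather than the definition actually in force (continuation along a path up to, but not through, $q$). The paper's argument is different and more robust: along a path in $F_0(T)$ converging to a point of $\Lambda_H$, the holonomy image in $C$ winds infinitely often around the two circles of the bouquet $F_1(T)$, hence has no limit, hence there is no asymptotic value and no analytic continuation across that point. So while your intuition about IFS attractors as limit sets playing the role of Fuchsian limit sets is exactly right, the realization of the germ and the non-extendability argument both need the tree/holonomy-tube machinery that the paper develops, not a suspension picture.
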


It is also possible to construct an open set of foliations with  admissible germs whose singular set has nonempty interior, thus extending  the case of the foliation constructed in item \ref{item:dense} of Theorem \ref{thm:circle}. Loray and Rebelo exhibit for every integer $d\geq 2$, a non empty open set $\mathcal U_d$ in the parameter space of degree $d$ plane algebraic foliations, such that every foliation belonging to $\mathcal U_d$ has dense leaves, and other strong ergodic properties, see~\cite{LR}.

\begin{theorem}\label{thm:curveofsing} For every $d\geq 2$ and every singular holomorphic foliation belonging to a nonempty open subset of $\mathcal U_d$, there is an admissible germ from a line to an algebraic curve whose singularity set contains a nonempty open set.
\end{theorem}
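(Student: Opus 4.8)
The plan is to transplant the mechanism behind item~\ref{item:dense}) of Theorem~\ref{thm:circle} --- a dense, contracting holonomy action dragging a single ``bad'' point for analytic continuation over a set with nonempty interior --- into an arbitrary foliation lying in a suitable open subset of the Loray--Rebelo family \(\mathcal{U}_d\), the rich dynamics now being supplied by \cite{LR} instead of by an explicit Riccati monodromy group. In this way the argument runs parallel to the proof of Theorem~\ref{thm:cantor}, with the horseshoe-type dynamics of a pair of hyperbolic singularities replaced by the much larger pseudo-group dynamics of a Loray--Rebelo foliation; this replacement is precisely what upgrades ``Cantor set'' to ``open set''.

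First I would extract from \cite{LR} an open ``rich-configuration'' condition. A foliation \(\mathcal{F}\in\mathcal{U}_d\) has dense leaves, so its holonomy pseudo-group on any transversal disc is minimal, and the strong ergodic properties established in \cite{LR} reflect the fact that this pseudo-group is far from discrete. From their construction I would isolate the following finite data: a transversal disc \(\Delta\) contained in a line \(L_0\), a point \(p_0\in\Delta\), finitely many holonomy maps \(g_1,\dots,g_k\) defined near \(p_0\), a subdisc \(\Delta_0\ni p_0\) on which the generated pseudo-group \(\mathcal{G}\) acts minimally, and a map \(g_0\in\mathcal{G}\) that is a strong contraction with attracting fixed point \(p_0\). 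Each of these requirements involves only finitely many holonomy maps and finitely many transversality inequalities, hence is open in the space of degree \(d\) foliations; so the set of \(\mathcal{F}\) satisfying all of them is a nonempty open subset of \(\mathcal{U}_d\), and I fix \(\mathcal{F}\) in it.

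Next I would choose the target algebraic curve \(C_1\) and a leafwise path defining an admissible germ \(h\colon(L_0,p_0)\to(C_1,p_1)\) so that \(C_1\) (or the geometry of \(\mathcal{F}\) along it) provides a genuine obstruction to continuation: \(C_1\) should contain a point at which it is tangent to \(\mathcal{F}\), or a singular point of \(\mathcal{F}\), arranged so that the point \(b\in\Delta_0\) from which the chosen path runs into this obstruction is a true singularity of \(h\) --- and not merely a ramification or algebroid point, which is what constrains the choice of \(C_1\) relative to \(\mathcal{F}\). This \(b\), and the curve \(C_1\), play the role of the special ``diagonal'' transversal of the introduction in a local suspension picture. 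Dragging \(b\) around by the elements of \(\mathcal{G}\) and using minimality on \(\Delta_0\) yields a dense set of singularities of \(h\) in \(\Delta_0\); to upgrade this to an \emph{open} set of singularities I would argue as in item~\ref{item:dense}) of Theorem~\ref{thm:circle}, which produced a full singular set in the pure Riccati model --- using the strong contraction \(g_0\) and a nested-domains (Baire category) argument to show that the maximal domain of analytic continuation of \(h\) misses an open region \(U\subset L_0\), because any path aiming at a point of \(U\) is forced by the density and contraction of \(\mathcal{G}\) onto the \(\mathcal{G}\)-orbit of \(b\) and must terminate before arriving. Since all the data used --- the maps \(g_0,\dots,g_k\), the contraction and minimality estimates, the tangency/singularity condition, and the transversality of \(L_0\) and \(C_1\) to \(\mathcal{F}\) --- persist under small perturbations of \(\mathcal{F}\) inside \(\mathcal{U}_d\) and of \(L_0\) among lines, the conclusion then holds on a nonempty open subset of \(\mathcal{U}_d\).

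The hard part will be the upgrade from a dense set to an open set of singularities. Density alone is insufficient, since a countable dense set is avoidable by paths; one must show that the obstruction to continuation is genuinely two-real-dimensional, and not concentrated on a real hypersurface --- in contrast with the Levi-flat natural boundaries of item~\ref{item:fuchsian}) of Theorem~\ref{thm:circle} and of Theorem~\ref{thm:ricperturbed}. This is exactly where the combination of density and contraction of the Loray--Rebelo dynamics is essential, and the technical core of the proof is transporting the global analytic-continuation bookkeeping of the explicit dense-monodromy model into the purely local, finitely generated pseudo-group setting provided by \cite{LR}. A secondary difficulty, already flagged above, is ensuring that the chosen geometric obstruction really blocks analytic continuation of \(h\) rather than merely its realization as a holonomy map.
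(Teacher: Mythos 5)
Your general direction --- replace the Schottky pair of Theorem~\ref{thm:cantor} by a richer Loray--Rebelo pseudogroup, and look for an open set in the space of foliations --- is aligned with the paper, but the mechanism you propose is different from the paper's and has two genuine gaps.

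First, the paper does not manufacture singularities via a geometric obstruction on~$C_1$ (a tangency or a singular point of $\mathcal F$), nor does it drag a single ``bad'' point~$b$ around by minimality. The singularities in both Theorem~\ref{thm:cantor} and Theorem~\ref{thm:curveofsing} are \emph{asymptotic values}: via the $\mathcal F$-homotopy/tree/diagonal machinery of Proposition~\ref{p:F-homotopy} (adapted here to an $l$-adic tree and a bouquet of $l$ circles), every point of the limit set of a semigroup of contractions $h_1,\dots,h_l:\overline B\to B$ is approached by a path along which the holonomy winds infinitely many times around the bouquet, hence has no limit. Your plan of placing a tangency or a singularity of $\mathcal F$ on $C_1$ does not obviously produce a true singularity of the germ $h$ at a prescribed point $b$; near a tangency the holonomy is algebroid, and you do not justify why a specific leafwise path running ``into the obstruction'' makes $b$ a singularity rather than a ramification point. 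Moreover, even granting a singularity at $b$, analytically continuing $h$ to $g(b)$ for $g$ in the holonomy pseudogroup does not transport that singularity, because the pseudogroup here is not transversally rigid; the ``dragging by minimality'' step is unjustified.

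Second, the upgrade from ``dense'' to ``open'' --- which you flag as the hard part and defer to a nested-domains/Baire argument --- is exactly where the paper does something concrete that your sketch does not supply. The paper first records (at the start of Section~\ref{sec:cantor}, immediately after the statement of Theorem~\ref{thm:curveofsing}) that the entire Theorem~\ref{thm:cantor} construction works for any finite family of contracting holonomy maps carried by holonomy tubes: the whole limit set of the generated semigroup becomes a set of singularities of an admissible germ from $B$ to a suitable algebraic curve. Then Lemma~\ref{lem:dense semi-group} is the real new content: using Loray--Rebelo's Lemma 3.3 (non-discreteness) and Corollary 4.2 (two $\mathbb R$-independent vector fields $X_1,X_2$ in the $C^0$-closure of the pseudogroup, with $k_n$'s carried by tubes), one builds a \emph{finite} family $h_i=g_i^{-1}\circ f^k$ of contractions whose inverses navigate, via approximations of $\exp(t_1X_1)\exp(t_2X_2)$, any point of a fixed disc $\mathbb D_r$ back into $\mathbb D_{|\lambda|^k r}\subset \mathbb D_r$; iterating gives, for every $p\in\mathbb D_r$, an infinite address $(i_n)$ with $p\in\bigcap_n h_{i_1}\circ\cdots\circ h_{i_n}(\mathbb D)$, so $\mathbb D_r\subset \Lambda_H$. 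This is an explicit IFS covering argument, not a Baire category argument, and it is what makes the singular set contain an open set. Your sketch of ``paths forced onto the $\mathcal G$-orbit of $b$'' does not establish a two-real-dimensional obstruction; a countable dense set of singularities is avoidable, as you yourself note, and nothing in your outline actually excludes that scenario.

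In short: the right ingredients (LR non-discreteness, contraction, holonomy tubes, algebraic-curve approximation) appear in your proposal, but the actual proof replaces both your ``obstruction on $C_1$'' step and your ``Baire upgrade'' step with the Theorem~\ref{thm:cantor} asymptotic-value machinery applied to a semigroup whose limit set is shown, by an explicit construction out of LR's flows, to have nonempty interior.
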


We notice that the algebraic curve in Theorems~\ref{thm:cantor} and \ref{thm:curveofsing} can be made rational. A small perturbation of the union of the line with the original algebraic curve would provide a non singular rational curve, and after a birational change of coordinates, an example of an admissible germ from a line to itself with the same conclusions. We will however not provide the details here.

We finish by mentioning that Theorems \ref{thm:cantor} and \ref{thm:curveofsing} are also still true for small perturbations of the curves where the admissible germs are defined. The proof of this statement essentially follows the ideas used to prove Theorem~\ref{thm:ricperturbed} and will not be included in this work. Also, the same results most likely hold in the more general context of generic foliations on any complex algebraic surface, but we won't attempt to present things here at that level of generality.
\subsection{Organization of the paper}

In Section~\ref{sec:countablesing} we discuss the case of foliations defined by closed meromorphic $1$-forms, and prove Theorem~\ref{thm:countablesing}. The method is quite different from that of~\cite{Marin}, discussed in the Introduction. We prefer  here to use arguments involving Painlev\'e's Theorem I. We also prove that, in some special cases, the analytic extension of a holonomy germ is still a holonomy germ.  This section is essentially independent of what follows.

Theorems~\ref{thm:circle} and~\ref{thm:ricperturbed} are proved in Section~\ref{s:Riccati foliations}. Introductory material for the proofs is developed in Subsection~\ref{s:singsetforgerms}, where it is proved that the inverse of the developing map of certain projective structures on curves has large sets of singularities.
It may be of independent interest as regards projective structures on Riemann Surfaces.
We also provide explicit polynomial forms for Theorems~\ref{thm:circle} and \ref{thm:ricperturbed} in Subsection \ref{sec:explicitformulae}.

In Section~\ref{sec:cantor} we prove Theorems~\ref{thm:cantor} and~\ref{thm:curveofsing}. This Section can be read independently.

\subsection{Thanks}

We thank Yuli I'lyashenko, David Mar\'{\i}n and Paulo Sad for interesting discussions during the preparation of this work. The authors would also like to thank the institutions CRM (Bellaterra), UNAM (Cuernavaca), IMPA (Rio de Janeiro) for their hospitality. G.~Calsamiglia acknowledges support form FAPERJ/CNPq, B.~Deroin from the Ministerio de Educaci\'on y Ciencia de Espa\~na, ANR-08-JCJC-0130-01 (France),  ANR-09-BLAN-0116 (France) and from the  International Cooperation Agreement Brazil-France, A.~Guillot from CONACyT (Mexico) grant~58354 and PAPIIT-UNAM (Mexico) grant~IN102307. S. Frankel is grateful also to the Max Planck Institute for Mathematics in the Sciences, where part of this work was done.


\section{Foliations defined by closed 1-forms}
\label{sec:countablesing}

In this section we exhibit examples of foliations on projective manifolds for which admissible germs between rational curves posess small  singular sets  for  analytic continuation. Since the argument  only  uses the codimension, we obtain:

\begin{proposition}
\label{prop:countablesing}
  Let $\mathcal{F}$ be a codimension one holomorphic singular foliation defined by a meromorphic closed 1-form  $\omega$ on a projective algebraic manifold $M$ . Suppose that $L_0$ and $L_1$ are rational curves in $M$  and $h_{\gamma}:(L_0,p_0)\rightarrow (L_1,p_1)$ is an admissible germ for $\mathcal{F}$ associated to a leafwise path $\gamma$. Then the set of singularities of $h_{\gamma}$ is at most a countable set in $L_0$.

\end{proposition}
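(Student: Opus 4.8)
The strategy is to exploit the closed 1-form $\omega$ to reduce the analytic-continuation problem along a leaf to controlling the zeros and poles of a multivalued holomorphic function, and then to invoke Painlevé's theorem (Theorem I) to get a countability bound. The plan is as follows. First I would note that, away from the polar and zero divisor of $\omega$, the foliation $\mathcal F$ has local holomorphic first integrals given by local primitives $f=\int\omega$; two such primitives differ by the integral of $\omega$ along a loop, i.e.\ by an element of the group of periods of $\omega$, which is a finitely generated (hence countable) subgroup of $(\bbC,+)$. Thus the holonomy germ $h_\gamma$ is characterized by the relation $f_1\circ h_\gamma = f_0 + c$ for an appropriate period constant $c$, where $f_i$ is a primitive near $p_i$ on $L_i$. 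Restricting $f_i$ to the rational curve $L_i$ gives a meromorphic-type multivalued function on $L_i\cong \bbP^1$; the key point is that analytic continuation of $h_\gamma$ along a path $\tau$ in $L_0$ is obstructed precisely when the continuation of $f_0|_{L_0}$ along $\tau$ hits a problem, or when one cannot solve $f_1|_{L_1}(w)=f_0|_{L_0}(\tau(t))+c$ locally — which happens only at critical points of $f_1|_{L_1}$ or where $f_1|_{L_1}$ fails to be locally invertible, a discrete (hence countable on a compact curve) set.

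The second and main step is to control the continuation of $f_0|_{L_0}$ itself. Here I would use the hypothesis that $L_0$ is \emph{rational}: the restriction $\omega|_{L_0}$ is a meromorphic 1-form on $\bbP^1$, so its primitive is a finite sum $\sum a_j\log(z-z_j) + R(z)$ with $R$ rational. The singularities for analytic continuation of such a function come from (a) the finitely many points $z_j$ where $\omega|_{L_0}$ has nonzero residue, around which the logarithmic branching lives — but branching is not an obstruction to continuation, only monodromy — and (b) the tangency points of $L_0$ with $\mathcal F$ and the points where $L_0$ meets the singular locus of $\mathcal F$ or the polar locus of $\omega$, all finite in number. The genuinely delicate contribution is the accumulation of preimages under holonomy: as one continues along $\tau$, the leaf through $\tau(t)$ may wind back and hit $L_1$ near a tangency or singular point, and the ``obstruction set'' is pulled back through the dynamics. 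This is exactly where Painlevé's Theorem I enters: the set of singular points of an analytic continuation obtained by solving an algebraic/algebroid relation over a one-dimensional base is at most countable, because it injects into a countable union of preimages of a finite set under a countable family of correspondences. I would phrase the holonomy correspondence as an algebroid correspondence between $L_0$ and $L_1$ cut out by the equation $f_1(w)=f_0(z)+c$, with $f_i$ algebroid over the respective $\bbP^1$'s, and invoke Painlevé to conclude that the singular set is at most countable.

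Concretely, the steps in order: (1) set up local primitives $f_i$ of $\omega$ on $L_i$ and the period group $\Lambda\subset\bbC$, noting $\Lambda$ is countable; (2) express $h_\gamma$ via $f_1\circ h_\gamma=f_0+c$ and identify the finite ``bad locus'' $B_i\subset L_i$ (intersection with $\mathrm{Sing}(\mathcal F)$, tangencies, poles and zeros of $\omega|_{L_i}$); (3) observe that analytic continuation of $h_\gamma$ along $\tau$ fails at $\tau(1)=q$ only if, in the process of continuing, the relation $f_1(w)=f_0(z)+c$ becomes non-solvable near the relevant point, which forces the continued branch to approach a point of $B_0$, or a point whose $h_\gamma$-image approaches a point of $B_1$, or a point where $df_1$ vanishes; (4) each such condition is an algebroid condition, so by Painlevé I the set of such $q$, obtained over all branches and all periods $c\in\Lambda$, is a countable union of countable sets, hence countable. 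The main obstacle, and the part requiring care, is step (3)–(4): making precise that the singularities of the \emph{inverse-type} continuation (solving for $w$ given $z$) are confined to the critical and bad locus, and that ranging over the countably many homotopy classes / period translates does not blow the count up beyond countable — this is precisely the content one extracts from Painlevé's theorem, and the proof should cite and apply it carefully rather than re-deriving it.
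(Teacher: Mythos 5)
Your proposal and the paper share the same high-level strategy: restrict $\omega$ to the rational curves, show that the holonomy germ intertwines these restrictions, and then invoke Painlev\'e's Theorem~I. The divergence is in how the transport relation is packaged and fed into Painlev\'e. You work with the \emph{integrated} form $f_1\circ h_\gamma = f_0 + c$, where $f_i=\int\omega$ on $L_i$, which forces you to track the period group $\Lambda$, manage the multivaluedness of the $f_i$, and recast the conclusion as a countability statement about ``algebroid correspondences'' --- and it is precisely this last step~(3)--(4) that stays vague, since the version of Painlev\'e you gesture at is not a standard statement and would itself require proof. The paper instead works with the \emph{differential} form of the same relation: a short Stokes argument with a foliated homotopy establishes $h_\gamma^*\omega_1 = \omega_0$ as germs of $1$-forms at $p_0$ (this is where the closedness of $\omega$ actually enters, and it is not a triviality you should leave unproved, as your sketch does). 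Since $L_0,L_1$ are rational, $\omega_0 = R(x)\,dx$ and $\omega_1=S(y)\,dy$ with $R,S$ rational, so $h_\gamma$ solves the rational first-order ODE $dy/dx=R(x)/S(y)$, and Painlev\'e's Theorem~I in its standard form (solutions of rational ODEs continue analytically along every path avoiding a countable subset of the base) gives the result at once --- no periods, no homotopy bookkeeping, no inventory of auxiliary bad loci. The single move you are missing is to differentiate your transport relation: that converts your diffuse step about algebroid correspondences into a one-line citation of the classical theorem.
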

In particular, this is true for germs of holonomy associated to meromorphic fibrations $f:M\dashrightarrow\mathbb{C}P^1$.
Theorem~ \ref{thm:countablesing} is a direct consequence of Proposition \ref{prop:countablesing}.
 As we will see in Sections~\ref{s:riccati} and~\ref{sec:cantor} the hypothesis $d\omega=0$    is crucial. Notice that it allows one to define a transversely Euclidean structure for the foliation on a Zariski open subset. A question that this leaves open is whether Proposition~\ref{prop:countablesing} extends to foliations that admit a transverse Riemannian structure on an invariant (Zariski) open set.


\begin{proof}
Let $\omega_i$ ($i=0,1$) be the meromorphic 1-form on the rational curve $L_i$ defined by restriction of $\omega$ to $L_i$. We claim  that the germs of 1-forms  $h_{\gamma}^*(\omega_1)$ and $\omega_0$ coincide at $p_0$. To prove the claim, consider $U$ the Zariski open set where $\omega$ is holomorphic. For any path $c$ in $U$ starting at $p_0=\gamma(0)$, the formula $F(c)=\int_{c}
\omega$ defines a multivalued holomorphic first integral of $\mathcal{F}_{|_{U}}$ on $U$ that forces the pole set $(\omega)_{\infty}$ to be invariant by $\mathcal{F}$. Hence, up to replacing $\gamma$ by its lift in a nearby leaf, we can suppose that $\gamma$ is contained in $U$ . Let $q\in L_0$ be a point sufficiently close to $p_0\in L_0$. Consider a foliated homotopy $H:[0,1]\times[0,1]\rightarrow U$ such that for all $s,t\in[0,1]$:
\begin{itemize}
  \item $H(s,0)=\gamma(s)$
  \item $\tau_0(t):=H(0,t)\in L_0$
  \item $\tau_0(1)=q$
  \item $H(s,t)$ belongs to the leaf of $\mathcal{F}$ through $\tau_0(t)$
  \item $\tau_{1}(t):=H(1,t)=h_{\gamma}(\tau_0(t))\in L_1$
\end{itemize}
 Since $\omega$ is closed on $U$ an application of Stokes' Theorem shows that the integral of $\omega$ on the path described by going once around the boundary of the square defined by $H$  is zero. On the other hand for any path $c$ in a leaf of $\mathcal{F}$, $c^*\omega\equiv 0$. Hence we have
$$\int_{\tau_0}\omega_0=\int_{\tau_0}\omega=\int_{\tau_1}\omega=\int_{h_{\gamma}\circ\tau_0}\omega_1=\int_{\tau_0}h_{\gamma}^*(\omega_1).$$

 The equality
 implies that $h_{\gamma}^*(\omega_1)=\omega_0$ as germs at $p_0$. Since meromorphic 1-forms on rational curves are rational, we can write the 1-forms in coordinates to find rational functions $R,S$ such that $\omega_0(x)=R(x)dx$ and $\omega_1(y)=S(y)dy$. The germ $y=h_{\gamma}(x)$ is a solution to the differential equation $$\frac{dy}{dx}=\frac{R(x)}{S(y)}.$$ A well known theorem of Painlev\' e (\cite{Painleve}) states that the \emph{solutions} of rational differential equations admit analytic extension along any path avoiding a countable set in the $x$ variable, i.e. in $L_0$. Hence the result.

\end{proof}

\subsection{Admissible analytic extension of an admissible germ; the examples of Mar\'{\i}n revisited}

In general it is not clear whether the analytic continuation of a holonomy germ is holonomic, and some care is needed
even in the case of foliations defined by global rational functions. For instance consider the foliation $\mathcal{F}=\{d(xy)=0\}$ and the lines $L_0=\{y=y_0\}$, $L_1=\{x=x_0\}$,  for fixed $x_0,y_0\neq 0$ close to $0$. The linear map $(x,y_0)\mapsto (x_0,x y_0/x_0)$ is a holonomy germ $(L_0,(x_1,y_0))\rightarrow L_1$ for $x_1\neq 0$ close to $0$ but not for $x_1=0$.

Nevertheless, in his thesis,  D. Mar\'{\i}n (\cite{Marin}) proved that for $\alpha\notin\mathbb{R}$ the foliation defined by $dy-y(y-1)(y-\alpha)dx=0$ in $\mathbb P^1\times\mathbb P^1$, which is Ricatti with respect to $dy=0$, satisfies that any holonomy germ $h$ from a non-invariant fibre $F$ of $dx=0$ to itself can be analytically extended along any path avoiding a countable set in the fibre (we could use Proposition~\ref{prop:countablesing} here) and, moreover, the extension is still a holonomy map.
He further proves that any two such germs of holonomy, (different from the identity), can be obtained one from the other by analytic continuation along paths in the fibre. We are going to adapt this example to the present setting and generalize it.
\begin{proposition}
\label{prop:holonomicextension}
  Given a meromorphic 1-form $\eta$  on $\mathbb P^1$ consider the foliation $\mathcal{F}$ defined by the meromorphic closed 1-form $\omega$ on $\mathbb P^1\times\mathbb P^1$ whose expression in a chart is $$\omega(x,y)=dx-\eta(y)\quad\quad(x,y)\in\mathbb{C}^2$$ and an admissible germ $h$ for $\mathcal{F}$ between fibres of $\{dx=0\}$. If the analytic extension of $h$ along a path $\tau$ in the fibre that avoids $(\eta)_{\infty}$ is the germ of a  biholomorphism at $\tau(1)$, then it is also an admissible germ for $\mathcal{F}$.
\end{proposition}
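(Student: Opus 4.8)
The plan is to exploit the structure of $\mathcal{F}$ as a suspension: since $\omega = dx - \eta(y)$ with $\eta$ depending only on $y$, the foliation off the polar fibres is transverse to both projections, and the leaves are the graphs of the multivalued functions $x = \int \eta(y) + \const$. First I would set up the first integral: on the Zariski open set $U = (\mathbb{P}^1 \setminus (\eta)_\infty) \times \mathbb{P}^1$, the function $G(x,y) = x - \int_{y_*}^y \eta$ is a multivalued holomorphic first integral, and a holonomy germ $h$ between two fibres $F_0 = \{x = a_0\}$ and $F_1 = \{x = a_1\}$ along a leafwise path $\gamma$ is characterized by the relation $a_0 - \int_{\gamma} \eta = a_1 - \int_{y_0}^{h(y_0)}\eta$ realized along a determination fixed by $\gamma$; equivalently, $h$ is the germ of the time-$(a_1-a_0)$ map of the (multivalued) flow of the vector field $\eta(y)^{-1}\,\partial_y$ on the fibre, with the determination pinned down by $\gamma$. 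This is the same computation as in the proof of Proposition~\ref{prop:countablesing}: $h^*\omega_1 = \omega_0$ where $\omega_i = \eta|_{F_i} = \eta(y)\,dy$, so $\int_{y_0}^{h(y_0)} \eta$ is a constant (independent of $y_0$) equal to $a_0 - a_1$ modulo the periods of $\eta$.

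Next I would take the analytic continuation $\widetilde{h}$ of $h$ along a path $\tau$ in $F_0$ avoiding $(\eta)_\infty$, and assume it is a biholomorphism germ at $\tau(1)$. The key point is that the relation $\widetilde h^*\omega_1 = \omega_0$ persists under analytic continuation (it is a closed analytic condition on germs, and continuation of holonomy germs is by holonomy germs, each of which satisfies it by Proposition~\ref{prop:countablesing}'s argument applied locally). Hence along $\tau$ the quantity $\int_{\tau(0)}^{\widetilde h(\tau(0))}\eta$ stays constant modulo periods; so $\widetilde h$ is still locally the time-$(a_1-a_0)$ map of the flow of $\eta(y)^{-1}\partial_y$ for an appropriate branch. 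To conclude that $\widetilde h$ is holonomic, I would construct the explicit continuous family of leafwise paths: for $p$ near $\tau(1)$ in $F_0$, lift the leaf of $\mathcal{F}$ through $p$ — i.e. follow the graph $\{G = G(p)\}$ — from $x = a_0$ to $x = a_1$. The obstruction to doing this globally along $\tau$ is exactly that the leaf through $\tau(t)$ might hit the polar fibres $(\omega)_\infty = (\eta)_\infty \times \mathbb{P}^1$ or a vertical (invariant) fibre before reaching $x = a_1$; but since $\widetilde h$ extends analytically and biholomorphically all the way to $\tau(1)$, this shows precisely that such a continuous family of leafwise paths survives, and that its endpoint map is $\widetilde h$. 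So the definition of admissible germ is met.

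The step I expect to be the main obstacle is the last one: showing that mere analytic extendability of $\widetilde h$ as a biholomorphism forces the geometric (leafwise-path) family to exist — i.e. that no "hidden" obstruction from the foliation geometry (a leaf becoming tangent to a fibre, or escaping to a singular/polar fibre) can occur while $\widetilde h$ nonetheless continues analytically. The resolution is to run the argument backwards: near any point of $\tau$, the germ $\widetilde h$ being a biholomorphism satisfying $\widetilde h^*\omega_1 = \omega_0$ lets one solve the ODE $dx/dy$ locally and reconstruct the leaf segment from $F_0$ to $F_1$ through each nearby point, and these local reconstructions patch along $\tau$ precisely because $\widetilde h$ itself patches; the biholomorphism hypothesis is what guarantees transversality to $F_1$ at the far end (so that the endpoint genuinely lands on $F_1$ as a local diffeomorphism), and the avoidance of $(\eta)_\infty$ by $\tau$ keeps everything in $U$. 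I would also remark that the biholomorphism hypothesis is not automatic — it is exactly the phenomenon of the example $\mathcal F = \{d(xy)=0\}$ above, where the continuation degenerates — which is why it must be assumed.
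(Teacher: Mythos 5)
Your setup is on the right track: you correctly identify the multivalued first integral $x - \int\eta$, the relation $\widetilde h^*\omega_1 = \omega_0$ and its persistence under continuation, and you correctly locate the real work in the last step (producing the continuous family of leafwise paths). However, the resolution you offer for that last step is not a proof but a restatement of the claim. Saying that ``the local reconstructions patch along $\tau$ precisely because $\widetilde h$ itself patches'' conflates two different things: continuity of the family of holonomy \emph{germs} (which is what the analytic continuation of $\widetilde h$ gives you for free) and continuity of a family of \emph{leafwise paths} up to relative homotopy (which is what admissibility requires). Two leafwise paths from $\tau(t)$ to $\widetilde h(\tau(t))$ can induce the same holonomy germ without being homotopic rel endpoints -- a loop in the leaf with trivial linear monodromy is enough -- so patching at the level of germs does not, by itself, produce a coherent global $\mathcal F$-homotopy. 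That is exactly the content of the ``hidden obstruction'' you worry about, and your ``run it backwards'' paragraph never actually dispels it.

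The missing idea, which is what the paper's proof supplies, is to pass to the universal cover of the regular part of the transversal. Let $\Sigma = \mathbb P^1\setminus(\eta)_\infty$ and $P:\widetilde\Sigma\to\Sigma$ be the universal cover; integrating $\eta$ gives a single-valued developing map $\mathcal D:\widetilde\Sigma\to\mathbb C$. Pulling $\mathcal F$ back to $\mathbb C\times\widetilde\Sigma$ yields the \emph{product} foliation $d(x-\mathcal D(y))=0$, whose leaves are literally graphs of $\mathcal D+c$. Now lift $\gamma$ and $\tau$ to the cover. The first-integral identity $x_1-\mathcal D(\tau_1(t)) = x_0-\mathcal D(\tau_0(t)) =: c(t)$, which you derived, holds upstairs as a genuine (single-valued) equality. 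Choose \emph{any} homotopy $H(s,t)$ in $\widetilde\Sigma$ from $\tau_0$ to $\tau_1$ extending $\gamma_2 = H(\cdot,0)$; then the explicit formula $F(s,t) = (\mathcal D(H(s,t))+c(t),\,H(s,t))$ is automatically a continuous family of leafwise paths from $\tau(t)$ to $\widetilde h(\tau(t))$, because each $F(\cdot,t)$ lies in the fiber $\{x-\mathcal D(y)=c(t)\}$. Projecting by $(\mathrm{id},P)$ gives the required $\mathcal F$-homotopy in $\mathbb C\times\Sigma$. This construction is what makes the patching issue evaporate: upstairs there is nothing to patch, because any interpolating homotopy $H$ works, and the hypothesis that $\tau$ avoids $(\eta)_\infty$ is exactly what keeps $\tau_0$, and hence the whole picture, inside the cover. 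Without this step your argument has a genuine gap.
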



\begin{proof}
 Let  $\Sigma=\mathbb P^1\setminus\{p_0,\ldots,p_d\}$ be the set where $\eta$ is holomorphic.  Denote by $P:\widetilde{\Sigma}\rightarrow \Sigma$ a fixed universal covering map. By integration of $\eta$ along paths we can define a holomorphic mapping that corresponds to a branched projective structure (see definition in section \ref{s:singsetforgerms}) $\mathcal{D}:\widetilde{\Sigma}\rightarrow\mathbb C$  with critical points at the zero divisor $P^{-1}((\eta)_0)$ and with monodromy $\rho:\pi_1(\Sigma)\rightarrow\mathrm{PSL}(2,\mathbb C)$. Furthermore its monodromy group $\Gamma=\text{Im}(\rho)$ is a subgroup of translations of $\mathbb C$.

 Notice that $(\omega)_{\infty}$ is invariant by $\mathcal{F}$ and contains the set $\text{Sing}(\mathcal{F})$, so $\mathcal{F}$ is regular on  $\mathbb P^1\times\mathbb P^1\setminus(\omega)_{\infty}=\mathbb{C}\times \Sigma$. The map $(id,P):\mathbb{C}\times\widetilde{\Sigma}\rightarrow\mathbb{C}\times \Sigma$ is a universal covering map, and thanks to the closedness of $\omega$, the pull-back of $\mathcal{F}$ is a regular foliation $\widetilde{\mathcal{F}}$ on $\mathbb{C}\times \widetilde{\Sigma}$  defined by the holomorphic 1-form $$d(x-\mathcal{D}(y))=0\text{  where  }(x,y)\in\mathbb C\times \widetilde{\Sigma}.$$

  Let $\gamma$ be a leafwise path such that $h_{\gamma}=h$ and $\tau$ as in the statement of the proposition. Lift them to paths in $\mathbb{C}\times\widetilde{\Sigma}$ with common base point and, by abuse of language, reuse the names $\gamma$ and $\tau$. We have $\gamma(0)=(x_0,y_0)$ and $\gamma(1)=(x_1,y_1)$ and suppose that both are transversality points of $\widetilde{\mathcal{F}}$ with the respective fibres of $dx=0$.  The path $\tau$ is contained in the fibre $F_0=x_0\times \widetilde{\Sigma}$ and satisfies $\tau(0)=\gamma(0)$. Suppose $h_{\gamma}$ admits analytic extension along $\tau$ (this is the case for most paths thanks to Proposition \ref{prop:countablesing}).  We claim that the extended germs are still holonomy germs of $\widetilde{\mathcal{F}}$  at the points of $\tau$ where $F_0$ is transverse to $\widetilde{\mathcal{F}}$.

We have $\gamma(s)=(\gamma_1(s),\gamma_2(s))$ and $\tau(t)=(x_0,\tau_0(t))$. Let $c(t)=x_0-\mathcal{D}(\tau_0(t))\in\mathbb C$. The path $h_{\gamma}(\tau(t))=:(x_1,\tau_1(t))\in F_1$ satisfies $\tau_1(0)=\gamma_2(1)=y_1$ and $x_1-\mathcal{D}(\tau_1(t))=c(t)$ for small values of $t$, hence for all $t\in[0,1]$. Choose a homotopy $H:[0,1]\times [0,1]\rightarrow\widetilde{\Sigma}$ such that for each $(s,t)\in[0,1]^2$,
\begin{itemize}
 \item $H(0,t)=\tau_0(t)$
\item $H(1,t)=\tau_1(t)$
\item $H(s,0)=\gamma_2(s)$
\end{itemize}
Then the map $F(s,t)=(\mathcal{D}(H(s,t))+c(t),H(s,t))\in \mathbb C\times \widetilde{\Sigma}$ defines for each $t\in[0,1]$ a path $\gamma_t$ in a leaf of $\widetilde{\mathcal{F}}$ with an endpoint at $\tau(t)\in F_0$ and the other at $h_{\gamma}(\tau(t))\in F_1$. If neither of the endpoints belong to $\text{Tang}(\widetilde{\mathcal{F}},dx=0)$, the holonomy germ associated to $\gamma_t$ realizes the analytic extension of the holonomy germ $h_{\gamma}$  along $\tau$. Finally we need to project the homotopy back to $\mathcal{F}$.  It is enough to consider the composition $(id,P)\circ F:[0,1]^2\rightarrow\mathbb{C}\times\Sigma$.
\end{proof}


\section{Riccati foliations on the plane}\label{s:Riccati foliations}

In this section we prove Theorems~\ref{thm:circle} and \ref{thm:ricperturbed}. We begin by some introductory material concerning projective structures on curves.

\subsection{Projective maps between surfaces}\label{s:singsetforgerms}


In this section, we introduce the main material which will serve to prove Theorems~\ref{thm:circle} and~\ref{thm:ricperturbed}. We consider branched projective structures on Riemann surfaces $\Sigma_i$ for $i=0,1$, and a germ of projective map (i.e. a composition of charts of the branched projective structures) between $\Sigma_0$ and $\Sigma_1$. We aim to provide examples of such germs whose analytic continuation has a large set of singularities. We begin with some classical facts on projective structures.

Let $\Sigma$ be a Riemann surface. A \textit{branched projective
structure} on $\Sigma=\cup U_j$ is a collection $\{ \mathcal D_j
\}$ of non-constant holomorphic maps $\mathcal D_j : U _j \subset
\Sigma \rightarrow \mathbb P^1$  such that the change of
coordinates (where defined) are Moebius transformations, i.e.
$\mathcal D_j \circ \mathcal D_k ^{-1} (z)$ has the form $(az+b)/(cz+d)$. In
fact any of the functions $\mathcal D_j$ can be extended
analytically to a map $\mathcal D : \widetilde{\Sigma} \rightarrow
\mathbb P^1$ defined on the universal cover $\widetilde{\Sigma}$
of $\Sigma$. These \textit{developing maps} $\mathcal D$  are thus uniquely
defined up to post-composition by a Moebius map, and they satisfy a
formula of the form
\[  \mathcal D (\gamma (p) ) = \rho (\gamma) \mathcal D (p),\]
for every $p\in \widetilde{\Sigma} $ and every $\gamma \in \pi_1
(\Sigma)$, where $\rho: \pi_1(\Sigma) \rightarrow \mathrm{PSL}(2,\mathbb C)$
is a certain  representation associated to $\mathcal D$ called the
\emph{monodromy representation}. The monodromy representation, and hence its image, is well-defined up to
conjugacy by a Moebius map, and we will refer to the conjugacy class of
$\Gamma:=\rho(\pi_1(\Sigma))$ in $\mathrm{PSL}(2,\mathbb{C})$ as the
\textit{monodromy group} of the branched projective structure $\mathcal{D}$.
The term \emph{branched} comes from the fact that the maps $\mathcal{D}_i$ may
have critical points. In fact the $\mathcal{D}_j$ define an
orbifold structure on $\Sigma$ with projective change of
coordinates. If there are no critical points, they define a Riemann
surface atlas with projective change of coordinates and will be
simply called a \emph{projective structure}.

A projective structure on a punctured sphere $\Sigma=\mathbb{P}^1\setminus\{p_1,\ldots,p_n\}$ can be compared to the projective structure given by its natural embedding $\Sigma \hookrightarrow \mathbb P^1 $ in the Riemann sphere, via the \textit{Schwarzian derivative} defined for any function $f$ by $\{f,z\}= \frac{f'''}{f'} -\frac{3}{2}(\frac{f''}{f'})^2$. More precisely, to any projective structure $\mathcal{D}$ on $\Sigma$ is associated canonically a quadratic form $q(z) dz^2$ by the formula
\[ q(z)dz^2 : = \{ \mathcal D  , z \} dz^2 ,\]
which is independent of the chosen developing map. The quadratic form can also be defined in the case of a branched projective structure, but in this case it is meromorphic.

We will restrict our attention to parabolic type projective structures on $\Sigma$. Namely, in some coordinate $w$ around any puncture $p_i$, some developing map of our projective structure is given by
\[  \mathcal D  = \frac{1}{2\pi i}\log (w-w(p_i)) .\]
This property is equivalent to the following Laurent series  expansion
of the quadratic differential $q(z)dz^2$
\[ \{  \mathcal D, z \} dz^2 = \left( \frac{1}{2 (z-z(p_i) ) ^2} + \ldots \right) dz^2\]
at the neighborhood of any puncture $p_i$ (see~\cite{Hille}). The set $\mathcal P$ of parabolic type projective structures on $\Sigma$ is an affine complex space
isomorphic to
the complex vector space of meromorphic quadratic differentials on the sphere, with poles only at the $p_i$'s of order at most $1$. This space has positive dimension as soon as the number of punctures is at least $4$.

Examples of parabolic type projective structures on punctured spheres are given by uniformizations. For our constructions, we will need projective structures of parabolic type on punctures spheres with monodromy groups dense  in $\mathrm{PSL}(2,\mathbb C)$:
\begin{lemma}
\label{lem:parabtypewithdensemonod}
If $\Sigma$ is the sphere punctured by at least four points, then there exists a parabolic type projective structure on $\Sigma$ with dense monodromy.
\end{lemma}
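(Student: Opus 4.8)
The plan is to use the fact that the space $\mathcal{P}$ of parabolic type projective structures on $\Sigma=\mathbb{P}^1\setminus\{p_1,\dots,p_n\}$ (with $n\geq 4$) is an affine complex space of positive dimension, together with the observation that the monodromy representation depends holomorphically on the point of $\mathcal{P}$. First I would recall a base point in $\mathcal{P}$: the uniformizing projective structure $\mathcal{D}_0$ coming from the identification of $\Sigma$ with a quotient of the disc (or upper half plane), whose monodromy is a discrete Fuchsian group $\Gamma_0\subset\mathrm{PSL}(2,\mathbb{R})$ with parabolic elements around the punctures. The key point is that $\mathcal{P}$ has complex dimension $n-3\geq 1$, so we can deform $\mathcal{D}_0$ inside $\mathcal{P}$ and watch the monodromy move.

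Second, I would set up the holomorphic family of monodromy representations. Fixing generators $g_1,\dots,g_{n-1}$ of $\pi_1(\Sigma)$ (a free group), the map sending a parabolic type structure to the tuple $(\rho(g_1),\dots,\rho(g_{n-1}))\in\mathrm{PSL}(2,\mathbb{C})^{n-1}$ is holomorphic on $\mathcal{P}$ (the developing map, being the ratio of solutions of the Schwarzian equation $\{\mathcal{D},z\}=q(z)$, depends holomorphically on $q$, and so does the deck-transformation data). Its image is therefore a holomorphically parametrized family of representations passing through the discrete faithful one $\rho_0$. The parabolicity condition pins down the local conjugacy class of each $\rho(g_i)$ around a puncture to remain parabolic, but the global representation is free to move.

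Third — and this is the main obstacle — I need to exhibit at least one parabolic type structure whose monodromy is dense, i.e. to rule out that the whole family $\rho_t$ stays inside proper closed subgroups of $\mathrm{PSL}(2,\mathbb{C})$. Here I would argue as follows. The set of tuples in $\mathrm{PSL}(2,\mathbb{C})^{n-1}$ generating a non-dense subgroup is a countable union of proper real-analytic (in fact algebraic) subsets: a non-dense subgroup is contained in a closed subgroup, and the closed subgroups of $\mathrm{PSL}(2,\mathbb{C})$ up to conjugacy are the elementary ones (fixing a point or a pair of points on $\mathbb{P}^1$, or a point in $\mathbb{H}^3$, giving $\mathrm{PSO}(3)$), $\mathrm{PSL}(2,\mathbb{R})$ and its conjugates, and all of $\mathrm{PSL}(2,\mathbb{C})$; each non-trivial proper case imposes a nontrivial algebraic condition on the tuple. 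So it suffices to show the holomorphic map $\mathcal{P}\to\mathrm{PSL}(2,\mathbb{C})^{n-1}$ is not contained in any one of these proper subvarieties, and then a Baire argument (or just the fact that a connected complex manifold is not a countable union of proper analytic subsets) produces a structure with dense monodromy. To see the image escapes each proper subvariety, it is enough to compute, at the Fuchsian base point $\rho_0$, a first-order deformation of $\mathcal{P}$ (a tangent vector is a meromorphic quadratic differential with at most simple poles at the $p_i$) whose effect on the monodromy is not tangent to $\mathrm{PSL}(2,\mathbb{R})$ — equivalently, a deformation that is not a quasiconformal deformation of the Fuchsian group, which exists precisely because the space of parabolic projective structures over a fixed complex structure (a torsor under $H^0(\Sigma,2K+\text{punctures})$) is strictly larger than Teichmüller space. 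Concretely, one picks $q$ a real quadratic differential times $i$, or uses that the Schwarzian parametrization of projective structures has the Teichmüller locus as a proper real submanifold, so generic small $q$ leaves it; such a $q$ moves $\rho_0$ off $\mathrm{PSL}(2,\mathbb{R})$, and since it already contains non-elementary Fuchsian data it cannot fall into an elementary subgroup either. Hence the image is Zariski-dense enough to avoid the countable bad locus, and a parabolic type projective structure with dense monodromy exists.

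I expect the genuinely delicate point to be the transversality computation at the Fuchsian base point — i.e. producing an explicit infinitesimal deformation of the projective structure that visibly pushes the monodromy out of $\mathrm{PSL}(2,\mathbb{R})$ — since everything else (holomorphic dependence, the classification of closed subgroups, and the Baire-category conclusion) is standard. In the write-up I would phrase this via the classical fact that $\dim_{\mathbb{C}}\mathcal{P} = n-3$ equals the complex dimension of Teichmüller space $\mathcal{T}(\Sigma)$, while the Fuchsian structures form only a totally real submanifold of half the real dimension, so $\mathcal{P}$ manifestly contains non-Fuchsian parabolic structures arbitrarily close to $\mathcal{D}_0$, and among those the dense-monodromy ones are generic.
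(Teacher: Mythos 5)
There is a genuine gap, and it is at the heart of your argument rather than in the ``delicate transversality'' step you flag at the end. You assert that the set of tuples in $\mathrm{PSL}(2,\mathbb{C})^{n-1}$ generating a non-dense subgroup is a countable union of proper (real-)analytic subsets, and you justify this by listing the proper closed subgroups of $\mathrm{PSL}(2,\mathbb{C})$ up to conjugacy as: elementary ones, $\mathrm{PSL}(2,\mathbb{R})$ and its conjugates, and the whole group. This list omits the discrete subgroups. A Schottky group or a quasi-Fuchsian group is a proper closed subgroup, non-elementary, and in general not conjugate into $\mathrm{PSL}(2,\mathbb{R})$; its non-density is caused by discreteness, not by lying in a positive-dimensional proper Lie subgroup. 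Discreteness is \emph{not} an algebraic or analytic condition on the tuple of generators: convex cocompact (e.g.\ quasi-Fuchsian) discrete representations form an \emph{open} set in the character variety. Indeed, the paper itself points out, in its explicit family for the four-punctured sphere, that for $\lambda$ small the monodromy is quasi-Fuchsian (hence discrete, hence non-dense) — an open subset of $\mathcal{P}$. So the ``bad'' locus of parabolic structures with non-dense monodromy has non-empty interior in $\mathcal{P}$, and your Baire-category conclusion fails. No amount of work on the transversality computation at the Fuchsian base point can repair this, since escaping the Fuchsian locus lands you in the quasi-Fuchsian (still discrete, still non-dense) open set.

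The paper's proof is structurally different precisely because of this point. It proves non-discreteness directly: it shows some trace-squared function $T_\gamma:\mathcal{P}\to\mathbb{C}$ is non-constant (using a geometric argument with the associated $\mathbb{P}^1$-bundle and the negativity of the compactified diagonal's self-intersection to rule out a constant family of representations), and then, crucially, it invokes \emph{Picard's little theorem} on the affine space $\mathcal{P}$ to produce a parameter where $T_\gamma$ takes a value $2\cos\alpha$ with $\alpha$ irrational. That forces an infinite-order elliptic element, hence a non-discrete monodromy group. Non-elementarity is handled by a separate preliminary argument (also using the bundle geometry and Camacho–Sad). Finally, non-discrete plus non-elementary gives density in either a conjugate of $\mathrm{PSL}(2,\mathbb{R})$ or in all of $\mathrm{PSL}(2,\mathbb{C})$, and in the former case a small perturbation making some $\mathrm{Tr}^2$ non-real, combined with Zassenhaus's lemma, finishes the job. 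The step you would need to add to your proposal is exactly this mechanism for excluding discreteness — without it, the genericity argument cannot close.
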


Before proving the lemma let us quickly sketch, for lack of a suitable reference, a proof that, in the case of punctured spheres, the monodromy of a parabolic type projective structure  is non elementary i.e. it is not conjugate to either, a subgroup of the affine group, a subgroup of $\mathrm{SU}(2)$ or to the group which preserves a non oriented geodesic in $\mathbb H^3$. Because  the  monodromy around the cusps is parabolic, it suffices to show that the monodromy is not conjugate to a subgroup of the affine group. But, if it were, then the foliation $\mathcal F$ constructed in subsection~\ref{s:riccati} would have a leaf that gives a section of the fibration $S\rightarrow \Sigma$ (the leaf through a fixed point of the monodromy in $\mathbb P^1$, the point at infinity for the affine action). Taking the compactification corresponding to the integers $n_p= 0$, (in subsection~\ref{s:riccati}) one shows by a computation that this leaf would compactify as an invariant algebraic curve passing through each saddle-node singularity of $\overline{\mathcal F}$. By the Camacho-Sad formula~\cite{Br}, the self-intersection of such an invariant curve vanishes. This would imply that the $\mathbb P^1$-bundle $\overline{S}$ would be biholomorphic to the product $\mathbb P^1 \times \mathbb P^1$, and would contradict the fact that there is a holomorphic section, $\overline{\Delta}$, of negative self-intersection. With this preliminary observation in hand, we proceed to prove Lemma~\ref{lem:parabtypewithdensemonod}.

\begin{proof}
For every element $\gamma \in \pi_1(\Sigma)$, consider the map $T_{\gamma}: \mathcal P \rightarrow \mathbb C$ which is defined as $T_{\gamma} (\sigma) = \mathrm{Tr}^2\  \rho_{\sigma}(\gamma)$, for any projective structure $\sigma$ whose monodromy is $\rho_{\sigma}$. This map is holomorphic. We claim that there exists a non trivial element $\gamma$ in $\pi_1(\Sigma)$ such that $T_{\gamma}$ is not constant, but if it were so, a well-known fact is that the representation $\rho_{\sigma}$ would be constant up to conjugation (see for instance \cite{HP}), equal to some representation $\rho$. Consider the $\mathbb P^1$-bundle constructed in Section \ref{s:riccati} using the representation $\rho$, the compactification being given by the choice of integers $n_p = 0$ for all cusps $p$ of $\Sigma$. Then the compactifications $\overline{\Delta_{\sigma}}$ of the diagonals defined by the projective structures $\sigma$ define a holomorphic family of different compact holomorphic curves, whose self-intersection is negative, by Equation~\ref{eq: self-intersection of diagonal} in Section \ref{s:riccati}: this is impossible since holomorphic curves intersect non negatively.  Thus, our claim is proven.


Then, because $\mathcal P$ is an affine complex space, $T_{\gamma}$ must take one of the values $2\cos \alpha$ for an irrational real number $\alpha$, by Picard's Theorem. The monodromy $\rho_{\sigma}(\gamma)$ of the corresponding projective structure is conjugated to an irrational rotation, and thus the image of the monodromy representation is not discrete. Because the monodromy of a parabolic type projective structure is non elementary, this implies that up to conjugation, either the monodromy is dense in $\mathrm{PSL}(2,\mathbb R)$, or it is dense in $\mathrm{PSL}(2,{\mathbb C})$.

If it is dense in $\mathrm{PSL}(2,\mathbb R)$, then by perturbing the projective structure, the image of the monodromy will still be non discrete. Indeed, by Zassenhaus's lemma~\cite{Zas}, a non abelian subgroup generated by elements close enough to the identity is non discrete. Hence, by perturbing the projective structure so that $\mathrm{Tr}^2 (\gamma)$ is not real, the image of $\rho_{\sigma}$ will be dense in $\mathrm{PSL}(2,\mathbb C)$.
\end{proof}

We will fix a hyperbolic Riemann surface $\Sigma$ with a branched projective structure, a developing map $\mathcal D$, and study the analytic continuation of the inverse $\mathcal D^{-1}$. We will prove that the singular set for the analytic continuation of this map is the limit set of the monodromy group, when either the projective structure is the uniformizing one, or when the monodromy group is dense (in which case the limit set is the whole Riemann sphere).

\begin{proposition}
\label{prop:singofgerms}
  Let $\Sigma$ be a hyperbolic Riemann surface equipped with a branched projective structure and $\mathcal D$ be a developing map. We denote by $h$ a germ with extension the multivalued map $\mathcal D^{-1}$.
  \begin{enumerate}

    \item \label{item:singofgerms0} if the projective structure is that given by uniformization,then $h$ has a natural boundary for  analytic continuation.

    \item \label{item:singofgerms1} if the monodromy group is dense in $\mathrm{PSL}(2,\mathbb{C})$, then $h$ has full singular set.

  \end{enumerate}
\end{proposition}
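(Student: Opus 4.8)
The plan is to convert analytic continuation of $h=\mathcal D^{-1}$ into the geometry of $\mathcal D$ itself. Write $\widetilde\Sigma^{\circ}$ for $\widetilde\Sigma$ with the discrete critical set of $\mathcal D$ removed, so that $\mathcal D\colon\widetilde\Sigma^{\circ}\to\mathbb P^1$ is a local biholomorphism. Continuing $h$ along a path $\tau$ in $\mathbb P^1$ is the same as lifting $\tau$ through $\mathcal D$ to a path in $\widetilde\Sigma^{\circ}$ issuing from the base point $p_0$ (with $\mathcal D(p_0)=z_0$), and the continuation fails to reach the end of $\tau$ precisely when this lift leaves every compact subset of $\widetilde\Sigma^{\circ}$ while its $\mathcal D$-image still converges in $\mathbb P^1$. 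Concretely: $q$ is a singularity of $h$ if and only if there is a path $\sigma\colon[0,1)\to\widetilde\Sigma^{\circ}$ with $\sigma(0)=p_0$, $\mathcal D(\sigma(t))\to q$, and $\sigma(t)$ not converging to a point of $\widetilde\Sigma^{\circ}$. Indeed, if $h$ continued past $q$ the resulting germ $g$ would satisfy $\mathcal D\circ g=\mathrm{id}$ near $q$, so $g(q)$ would be a noncritical preimage of $q$ and $\sigma(t)=g(\mathcal D\sigma(t))\to g(q)$, contradicting the escape; conversely a local inverse of $\mathcal D$ supplies the continuation up to $t=1$ whenever $\sigma$ does converge in $\widetilde\Sigma^{\circ}$.

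This settles item~\ref{item:singofgerms0} at once. Realize the uniformizing structure with $\widetilde\Sigma$ a round disc $\mathbb D\subset\mathbb P^1$ and $\mathcal D$ the inclusion, so $\widetilde\Sigma^{\circ}=\mathbb D$ and lifting a path is the identity. The criterion then gives: no $q\in\mathbb D$ is a singularity; no point of $\mathbb P^1\setminus\overline{\mathbb D}$ is even reachable by a continuable path; and every $q\in\partial\mathbb D$ is a singularity, reached by any path entering $\mathbb D$ from inside. Hence $S(h)=\partial\mathbb D$, a topological circle, and $D:=\mathbb D$ exhibits it as a natural boundary; the obstruction at each point of $\partial\mathbb D$ is ``topological'' in that the putative limit value lies outside the range $\widetilde\Sigma$.

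For item~\ref{item:singofgerms1} I would record three facts. First, $S(h)$ is $\Gamma=\rho(\pi_1\Sigma)$-invariant: it depends only on the multivalued function $\mathcal D^{-1}$, and $\mathcal D\circ\gamma=\rho(\gamma)\circ\mathcal D$ gives $\mathcal D^{-1}\circ\rho(\gamma)=\gamma\circ\mathcal D^{-1}$, so $\rho(\gamma)S(h)=S(h)$. Second, $S(h)\neq\varnothing$: otherwise every path from $z_0$ would be fully continuable, so by the monodromy theorem and simple-connectedness of $\mathbb P^1$ the germ $h$ would extend to a holomorphic map $\mathbb P^1\to\widetilde\Sigma$, which is constant because $\widetilde\Sigma$ is hyperbolic, contradicting $\mathcal D\circ h=\mathrm{id}$. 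Third, $S(h)$ in fact contains a dense set: being dense, $\Gamma$ is non-elementary and contains a loxodromic element $\rho(g)$, $g\in\pi_1\Sigma$; pick a path $c$ in $\widetilde\Sigma^{\circ}$ from $p_0$ to $gp_0$ whose $\mathcal D$-image avoids the repelling fixed point of $\rho(g)$, and form $\sigma=c\ast gc\ast g^2c\ast\cdots$ (reparametrized on $[0,1)$). Since $g$ has infinite order and the action is properly discontinuous, $g^np_0\to\partial\widetilde\Sigma$, so $\sigma$ leaves every compact subset of $\widetilde\Sigma^{\circ}$; and on the piece $g^nc$ one has $\mathcal D=\rho(g)^n$ applied to the fixed compact path $\mathcal D(c)$, which avoids the repelling point, so $\mathcal D(\sigma(t))\to a$, the attracting fixed point of $\rho(g)$. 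By the criterion $a\in S(h)$, hence by invariance the dense orbit $\Gamma\cdot a\subset S(h)$.

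The remaining, and genuinely hard, step is to pass from a dense $\Gamma$-invariant subset to all of $\mathbb P^1$: given an arbitrary $q$, one must manufacture a path $\sigma$ in $\widetilde\Sigma^{\circ}$ from $p_0$ that escapes while $\mathcal D(\sigma(t))\to q$. The natural attempt chains together translates by a steering sequence $\gamma_1,\gamma_2,\dots\in\pi_1\Sigma$ (chosen so that $\rho(\gamma_1\cdots\gamma_k)$ contracts uniformly onto $q$, which density permits) of a single ``outgoing'' ray — for the parabolic-type structures of our applications one takes the geodesic ray into a cusp, where in the $\tfrac1{2\pi i}\log$-chart $\mathcal D$ is affine and carries the cusp horoball biholomorphically onto a round disc that can be made to shrink to $q$ — joined by short bridges through the part of $\widetilde\Sigma$ (a fundamental domain, moved by $\rho(\gamma_k)$) on which $\mathcal D$ is controlled. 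The main obstacle is precisely the bridges: one must arrange that consecutive rays are joined while $\mathcal D$ stays inside neighbourhoods of $q$ shrinking to $\{q\}$, i.e.\ that each bridge remains in one connected component of $\mathcal D^{-1}$ of a small ball about $q$. Making the steering sequence $(\gamma_k)$ simultaneously aim at $q$ and keep the bridges in a single component is where the full density of $\Gamma$ in $\mathrm{PSL}(2,\mathbb C)$ — not merely non-discreteness of a single orbit — must be used, and where the argument genuinely interacts with the boundary behaviour of the developing map.
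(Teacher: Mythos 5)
Your criterion reducing singularities of $h=\mathcal D^{-1}$ to escaping lifts with converging $\mathcal D$-image is correct, and your proof of item~\ref{item:singofgerms0} is essentially the paper's: both realize the uniformizing structure with $\widetilde\Sigma=\mathbb D$ and $\mathcal D$ the inclusion, and observe that the universal covering map $\mathbb D\to\Sigma$ extends continuously to no boundary point.

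For item~\ref{item:singofgerms1}, however, there is a genuine gap, and you are candid about it. Your argument cleanly establishes that $S(h)$ is $\Gamma$-invariant, nonempty, and contains the attracting fixed point $a$ of a loxodromic element (and hence its dense orbit $\Gamma\cdot a$). But dense is not full: $S(h)$ has no a priori closedness, since a limit of singular points need not admit a path realizing the failure of continuation. You then sketch a ``steering sequence'' strategy but do not carry it out, and you identify the correct obstruction (controlling the bridge pieces of the path so that $\mathcal D$ stays inside shrinking neighbourhoods of $q$). Your proposed route via cusp horoballs is also not what the paper does, and it is not clear it would close the gap.

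The paper handles an \emph{arbitrary} $z\in\mathbb P^1$ directly and uniformly, with no cusps. By compactness and density of $\Gamma$, one produces a finite set $\mathcal B\subset\pi_1(\Sigma)$ with the property that any almost-antipodal pair $(u,v)$ can be mapped by some $\rho(\beta)$, $\beta\in\mathcal B$, to points near a fixed pair $(z_0,z_0')$ with $|D\rho(\beta)|(u)\ge 4$ and $|D\rho(\beta)|(v)\le 1/4$. Iterating from $(z,z')$ gives a word $\beta_n\cdots\beta_1$ whose derivative at $z$ is $\ge 4^n$ and at $z'$ is $\le 4^{-n}$; setting $\alpha_n=\beta_n^{-1}$, $A_n=\alpha_1\cdots\alpha_n$, a direct estimate on the Moebius map $\rho(A_n)\circ R$ (with $R$ a bilipschitz correction sending $z,z'$ to $\rho(A_n)^{-1}(z),\rho(A_n)^{-1}(z')$) shows the ``bad'' set $B_n=\{w:|D\rho(A_n)|(w)\ge 2^{-n}\}$ shrinks exponentially and $\rho(A_n)(\mathbb P^1\setminus B_n)$ lies in a ball of radius $O(2^{-n})$ about $z$. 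The crucial bridge step — exactly what you flagged as the main obstacle — is then solved inside a \emph{fixed} large compact ball $D\subset\widetilde\Sigma$ containing all $\alpha(p_0)$, $\alpha\in\mathcal A=\mathcal B^{-1}$: since $B_{n-1}$ is tiny, $\mathcal D^{-1}(B_{n-1})\cap D$ is a finite union of small topological balls, and one joins $p_0$ to $\alpha_n(p_0)$ by a path $a_n$ of bounded length avoiding them. The concatenation $c$ of $A_{n-1}a_n$ then has $\mathcal D\circ c$ of total length summable (exponentially small pieces), hence $\mathcal D\circ c(t)\to z$, while $c$ escapes to infinity. This is where density in $\mathrm{PSL}(2,\mathbb C)$ is really consumed: it supplies the uniform finite set $\mathcal B$ with the expansion/contraction property for \emph{every} almost-antipodal pair, not merely a loxodromic whose orbit is dense. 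You would need to supply a construction of this kind to complete your argument.
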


Before proceeding to the proof let us remark that we are going to
find singularities of the inverse $h^{-1}$ based on an analysis of
the asymptotic values of the germ $h$. Given a holomorphic germ
$h:(\Sigma_0,p_0)\rightarrow(\Sigma_1,z_0)$ between complex curves,
we say $z\in\Sigma_1$ is an \emph{asymptotic value for $h$} if
there exists a path $\tau:[0,1)\rightarrow \Sigma_0$ such that
$\tau(0)=p_0$, the analytic continuation of $h$ along $\tau$ exists
and is a germ of biholomorphism at each point of $\tau$,
\[z=\lim_{t\rightarrow 1} h(\tau(t)) \text{ but }\lim_{t\rightarrow 1} \tau(t)\text{ does not exist}.\] Hence $h^{-1}$ does not admit continuous extension to $z$ along $h\circ\tau$, which is enough for $z$ to be a singularity of $h^{-1}$.
 \begin{proof} Let us start with item (\ref{item:singofgerms0}). By the uniformization theorem for Riemann surfaces, $\Sigma$ is biholomorphic to the quotient of $\mathbb{D}$ by the action of a Fuchsian group preserving $\mathbb{D}$. The hypothesis states that, modulo conjugation by a Moebius map, $h^{-1}$ is a germ of the universal covering map $\mathbb{D}\rightarrow\Sigma$, hence it does not extend continuously to any point in the boundary of $\mathbb{D}$ .

To prove item (\ref{item:singofgerms1}) fix a point $p_0$ in
$\widetilde{\Sigma}$ over $0\in\Sigma$, and denote  $z_0= \mathcal
D (p_0)$. Let $\delta>0$ be a small real number, say $\delta =
1/10000$. On the sphere $\mathbb P^1 $, we consider the spherical
metric of constant positive curvature given in affine coordinates
$w$ by $|ds| = \frac{|dw|}{1+|w|^2}$. We denote by $Dh$ the
derivative of a Moebius map $h$ acting on the sphere, and by
$|Dh|(w)$ its spherical norm at the point $w$. If $w$ is a point of
the sphere, we denote by $w'$ the antipodal point.

For each point $z\in\mathbb P^1$ we will construct a finite set
$\mathcal A$ in $\pi_1(\Sigma)$, and an infinite sequence $\alpha_1,
\ldots, \alpha_n, \ldots$ of elements of $\mathcal A$, which has the
following properties. Denoting by $A_n := \alpha_1 \ldots \alpha_n$ (i.e. the group product), for every positive integer $n$, the diameter of the ball $$B_n
:= \{ w\in \mathbb P^1\ |\ |D \big( \rho(A_n) \big)|(w)\geq
1/2^{n}\}$$ tends to $0$ exponentially fast when $n$ tends to
infinity, and the image $\rho(A_n) (\mathbb P^1 \setminus B_n)$ is
contained in $B(z, cst/ 2^{n})$, where $cst$ is a universal
constant. Moreover, neither $z_0$ nor $\rho(\alpha_{n+1}) (z_0)$
belong to $B_n$ (observe that this implies in particular that
$\rho(A_n) z_0$ converges to $z$ when $n$ tends to infinity).

Before proving the existence of such a sequence $\alpha_n$, let us
explain why it implies the lemma. The idea is to consider, for every
non negative integer $n$, a smooth path $a_n : [0,1] \rightarrow
\widetilde{\Sigma}$, which begins at $p_0$ and ends at $\alpha_n
(p_0)$, of length bounded by a constant independent of $n$
(depending only on $\mathcal A$), and such that for a sufficiently
large integer $n$, $\mathcal D \circ a_n$ does not take any value in
the ball $B_{n-1}$. The reason why these paths $a_n$ exist is the
following. Consider in $\widetilde {\Sigma} $ a large ball $D$
containing all the points $\alpha (p_0)$, where $\alpha$ ranges over  $\mathcal A$. If $n$ is large enough, $B_{n-1}$ is a ball in
$\mathbb P^1$ with small diameter $|B_{n-1}|$, so that the set $D
\cap \mathcal D ^{-1} (B_{n-1})$ is a disjoint union of topological
balls of diameter bounded by $cst . |B_{n-1}|$, and their number is
bounded by the degree of $\mathcal D _{| D}$. To construct $a_n$, it
suffices to follow the geodesic $[p_0,\alpha_n(p_0)]$ between $p_0$
and $\alpha_{n} (p_0)$ in $\widetilde{\Sigma}$, and to
make a detour around the
components of $\mathcal D^{-1} (B_{n-1}) \cap D$ each time the
geodesic hits one of them.

We are ready to construct the path $c: [0,\infty) \rightarrow
\widetilde{\Sigma}$. This is an infinite concatenation of the paths
$c_n := A_{n-1} a_n$, which starts at $A_{n-1}(p_0)$ and ends at $A_n
(p_0)$; namely, we define $c (t) = c_n (t-n+1)$ for every non negative
integer $n$ and every $t\in [n,n+1]$. Notice that we have $\mathcal
D \circ c_n = \rho(A_{n-1}) \circ \mathcal D \circ a_n$, so that
because $\mathcal D \circ a_n$ does not take any value in the ball
$B_{n-1}$, the length of $\mathcal D \circ c_n$ goes to $0$
exponentially fast. We deduce that $\mathcal D \circ
c(t)$ converges to a point in $\mathbb P^1$ when $t$ tends to
infinity; this point must be $z$ since $c(n) = \rho(A_n) (z_0)$
converges to $z$ when $n$ tends to infinity, and thus $z$ is an
asymptotic value of $\mathcal{D}$.

It remains to prove the existence of the set $\mathcal A$ and of the
sequence $(\alpha_n)$.
We claim that it is possible to find a
finite subset $\mathcal B$ of $\pi_1 (\Sigma)$ such that, for every
pair of points $\{ u,v \}$ contained in $\mathbb P^1$ verifying
$d(u,v') \leq \delta$, there is an element $\beta \in \mathcal B $
such that
\begin{itemize}
\item $\rho (\beta)$ maps the points $u$ and $v$ to points $\delta/2$-close to $z_0$ and $z_0'$ respectively,
\item $| D \big( \rho(\beta) \big)| (u)\geq 4$ and $| D \big( \rho  (\beta) \big)| (v) \leq 1/4$.
\end{itemize}
Indeed, remark that for every pair $\{ u,v \}$ of points of the
sphere such that $d(u,v') \leq \delta$, there is a Moebius map $h$
in $\mathrm{PSL}(2,\mathbb C)$ such that $d(h(u), z_0) < \delta/2$, $d(h(v),
z_0')< \delta/2$ and $|Dh|(u) >4$, $|Dh|(v) < 1/4$. Because $\rho
(\pi_1(\Sigma))$ is dense in $\mathrm{PSL}(2,\mathbb C)$, we can suppose that
the Moebius map $h$ belongs to the image of $\rho$. But then, the
four preceeding conditions will be satisfied if we move $u$ and $v$
a little bit. Thus, the claim is a consequence of the compactness of
the set of pairs of points $\{u,v\}$ such that $d(u, v')\leq
\delta$.

We construct a sequence of elements $\beta_n$ in $\mathcal B$ in the
following way. The element $\beta_1$ is the element $\beta\in
\mathcal B$ corresponding to the choice $u= z$, $v= z'$. The element
$\beta_2$ is the element of $\mathcal B$ corresponding to the choice
$u= \beta_1 (z)$ and $v= \beta_1(z')$, etc. By construction,
$\beta_n \ldots \beta_1 $ maps $z$ and $z'$ to points
$\delta/2$-close to $z_0$ and $z_0'$ respectively, and moreover
\[  | D (\beta_n \ldots \beta_1 ) | (z ) \geq 4^n,\ \ \ \ \mathrm{and}\ \ \ \ | D (\beta_n \ldots \beta_1 ) | (z' ) \leq 4^{-n} .\]
Then we define $\mathcal A = \mathcal B ^{-1}$, $\alpha_n = \beta_n
^{-1}$. We will prove that this sequence verifies the required
conditions.

We need to analyze the behaviour of the function $|D \big( \rho(A_n)
\big) |$. Consider a Moebius map $R$ which sends respectively the
points $z$ and $z'$ to the points $\rho(A_n) ^{-1}(z) $ and
$\rho(A_n) ^{-1}(z')$, and which is $2$-bilipschitz. Such a map
certainly exists since the points $\rho(A_n)^{-1}(z)$ and
$\rho(A_n)^{-1}(z')$ are almost antipodal: more precisely, they are
$\delta/2$-close to $z_0$ and $z_0'$, so we have
$d(\rho(A_n)^{-1}(z) ', \rho(A_n)^{-1}(z') ) \leq \delta$. We will
study the derivative of the Moebius map $\rho(A_n) \circ R$ instead
of  $\rho(A_n)$. Observe that we have $| D ( \rho(A_n)
\circ R) | = |D R | . (|D \big( \rho(A_n) \big)| \circ R) $, and
thus $|D\big( \rho(A_n) \big)|$ is the same as $| D\big(
\rho(A_n)\circ R \big)| \circ R^{-1} $ up to a multiplicative
constant.  Changing affine coordinate if necessary without changing
the expression of the spherical metric, we can suppose that $z'=0$,
hence $z = \infty$. Because $\rho(A_n) \circ R$ fixes $z$ and $z'$,
it is a homothety: $\rho(A_n) \circ R (w) = \lambda . w$. A direct
computation shows that
\[ |D \big( \rho(A_n) \circ R \big) | (w) = \frac{1+|w|^2} {|\lambda| . |w|^2 + |\lambda |^{-1}}. \]
Observe that for every non negative integer $n$,
\[ |\lambda | = |D \big( \rho(A_n)  \circ R \big) | (0) \geq cst . |D \big( \rho(A_n) \big) | (\rho(A_n)^{-1} (z') ) \geq cst . 4^n.\]
Then, the derivative of $\rho(A_n) \circ R$ is larger than $cst .
2^{-n}$ if and only if $|w| \leq cst . 2^{-n/2}$. Hence, $B_n
\subset R (\{ |w|\leq cst . 2^{-n/2} \})$ and the diameter of $B_n$
tends exponentially fast to~$0$.

It remains to prove that neither $z_0$ nor $\rho(\alpha_{n+1})
(z_0)$ belong to $B_n$. We see that $|D\big( \rho(A_n) \circ R \big)
) |(w) \leq cst.|\lambda|^{-1}\leq cst. 4^{-n}$ if $w$ does not
belong to the ball of radius $\delta$ around $z'$. This implies that
$|D\big( \rho(A_n) \big)|$ is bounded by $cst. 4^{-n}$ outside the
ball $R (B(z', \delta)) \subset B (z_0' , 3\delta)$. Because $z_0$
does not belong to $B(z_0', 3)$, we have $|DA_n | (z_0 ) \leq cst .
4^{-n}$. Moreover,
$$|D ( \rho(A_n)  )| (\rho (\alpha_{n+1})
(z_0) ) = \frac{|D ( \rho(A_{n+1})  )|(z_0)}{ |D  (
\rho(\alpha_{n+1}) ) | (z_0) }\leq cst. 4^{-n}.$$
Hence, for $n$
sufficiently large, neither $z_0$ nor $\rho(\alpha_{n+1}) (z_0)$
belong to $B_n$. Item (\ref{item:singofgerms1}) is proved. \end{proof}

\subsection{Riccati foliation associated to a projective structure}\label{s:riccati}

Given any parabolic type projective structure on a punctured sphere $\Sigma$, we proceed to the construction of some associated  transversally projective foliations on rational complex surfaces. Fix a developing map $\mathcal D$ of the projective structure and the corresponding monodromy representation $\rho$. Consider the quotient of $\widetilde{\Sigma} \times \mathbb P^1$ by the action of $\pi_1 (\Sigma)$ on it given by $\mathrm{id} \times \rho$. There are three holomorphic structures invariant by this action:
\begin{itemize}
  \item The vertical fibration $\pi:\widetilde{\Sigma} \times \mathbb  P^1\rightarrow \widetilde{\Sigma}$,
  \item The horizontal foliation, transversal to the
  fibration $\pi$,
  \item The section
  $\Delta:=\{(x,\mathcal D(x)):x\in\widetilde{\Sigma}\}$, which is transverse to
  both $\pi$ and the foliation at every
point.
\end{itemize}
The quotient $S=(\widetilde{\Sigma} \times \mathbb  P^1)/\pi_1(\Sigma)$
is a non-compact surface equipped with a
$\mathbb P^1$-fibration $\pi:S\rightarrow\Sigma$, a
smooth holomorphic
foliation $\mathcal{F}$ transverse to $\pi$, and a holomorphic section $\Delta$
of $\pi$ transverse to $\mathcal{F}$.  For branched projective structures this has to be modified
by considering locally branched covers over the base, but this does not affect the compactifications that follows.

Our goal is to compactify~$(S,\pi,\mathcal F, \Delta)$, where $\pi$ compactifies as a rational fibration over $\mathbb P^1$, namely a Hirzebruch surface (see~\cite{Barth-al}), but with a singular foliation. Above a neighborhood of a cusp $p_i$, we have seen that the developing map $\mathcal D$ is given in some coordinates by $\mathcal D (w) = \frac{1}{2\pi i} \log w$, where $w$ induces a biholomorphism from a neighborhood $D_i$ of $p_i$ to the unit disc.  Thus, the fibration $\pi$  restricted to $\pi^{-1} (D_i)$ is defined as the quotient of $\mathbb H \times \mathbb P^1$ by the action $(x,z) \mapsto (x+1, z+1)$, where $x:= \frac{1}{2\pi i}\log w$, the foliation is the quotient of the horizontal foliation, and the curve $\Delta$ is the quotient of the diagonal $\{ z= w\}$.

There exist local models of $\mathbb P^1$-bundles over a disc with a meromorphic flat connection with a pole at $0$, and  parabolic monodromy (see \cite{Br}): for any non negative integer $n$, they are defined by the following differential equation
\begin{equation}\label{singularity}
w dt + (w^n - nt) dw =0,\end{equation}
in the coordinates $(w,t)$ belonging to ${\mathbb  D} \times \mathbb P^1$. These foliations have
either one or two singularities on the invariant
fiber $w=0$,
according to whether $n=0$ or $n>0$ respectively. In the $x$-coordinates, the equation (\ref{singularity}) reads $\frac{dt}{dx} = -e^{nx} + nt$, and
the solutions are given by $t(x)= (cst - x) \cdot  e^{nx}$. Thus, the map $(x,z) \mapsto ( x, t = (z-x) e^{nx} ) $ induces a biholomorphism between $\pi^{-1} (D_i)$ and $\mathbb D ^* \times \mathbb P^1$, sending the vertical fibration to itself, the foliation $\mathcal F$ to the foliation defined by (\ref{singularity}), and the curve $\Delta$ to the curve whose equation is $\{t = 0\}$.

Gluing the model (\ref{singularity}) to the surface $S$ for each cusp of $\Sigma$, we obtain a compact complex surface $\overline{S}$. It is equipped with a rational fibration $\mathbb  P^1 \rightarrow \overline{S} \stackrel {\overline{\pi}}{\rightarrow} \mathbb P^1$, and a singular holomorphic foliation $\overline{\mathcal F}$ transverse to $\overline{\pi}$, with $d+1$ invariant fibers defined by the models (\ref{singularity}). Moreover, the section $\Delta$ compactifies to a section $\overline{\Delta}$ of $\overline{\pi}$ which passes through each singularity of $\overline{\mathcal F}$ of type (\ref{singularity}) corresponding to a cusp $p$ with $n_p >0$, and
such that $\overline{\Delta}$
is transverse to $\overline{\mathcal F}$ at each point of intersection, of a fiber over a cusp $p$ of $\Sigma$, iff
$n_p=0$. We deduce   (see \cite{Br}) that the number of \lq\lq tangencies\rq\rq ,
\begin{equation}\label{eq:tangency}\mathrm{Tang} (\overline{\mathcal F}, \overline{\Delta} ) =\sum_p n_p.\end{equation}

We claim that the self-intersection of $\overline{\Delta}$ is
\begin{equation}\label{eq: self-intersection of diagonal} \overline{\Delta} ^2 = 2 + \sum_p (n_p -1).\end{equation}

To prove this fact, observe that the normal bundle of $\overline{\Delta}$ is canonically isomorphic to the tangent bundle of the fibration $\overline{\pi}$ restricted to $\overline{\Delta}$, because $\overline{\Delta}$ is everywhere transverse to $\overline{\pi}$. Consider the morphism
$P: T \Delta \rightarrow T  F|_{\Delta}$, from the tangent bundle of $\Delta \subset \overline{\Delta}$ to the tangent bundle of the fibration $F$,
defined as the projection along  the tangent bundle of $\mathcal F$. In the coordinates $(w,t)$ above a neighborhood of the cusp $p$, the projection $P$ has the following form:

\[  P\left(\frac{\partial}{\partial w}\right) = w^{n_p -1} \frac{\partial}{\partial t}.\]

This means that $P$ extends   meromorphically from $T\overline{\Delta}$ to $TF|_{\overline{\Delta}}\simeq N_{\overline{\Delta}}$, with a $0$ of order $n_p-1$ at the cusp $p$, and we get the desired formula.

We recall here that by deforming the conformal structures of the punctured sphere, associated to base or fibre, we obtain foliations
that are holomorphically different and remark that only a very special subset of these admit
holomorphic diagonals~\cite{D-O}. This provides some motivation to consider smooth real two-dimensional transversals,
with the conformal structure they inherit from the foliation.

\subsection{Proof of Theorem \ref{thm:circle}}
Choose $d\in\{2, 3, \ldots\}$. Consider the Riemann sphere punctured at $d+1$ points, together with its parabolic type \emph{uniformizing} projective structure: take a curvilinear
$(d+1)$-sided polygon in the unit disc $\mathbb{D}\subset\mathbb P^1$ with vertices in the
unit circle $\mathbb{S}^1$ and   consecutive sides $C_i$, $i=0,\ldots,d$, that are subarcs of circular arcs
perpendicular to $\mathbb{S}^1$ at the vertices.
Let $r_i$ be the hyperbolic reflection with respect to the circle
containing $C_i$ and let $G$
denote the (Fuchsian) group of Moebius transformations whose generators are the compositions
$\rho_i=r_i\circ r_{i+1}$ for $i=0,\ldots,d$ where $r_{d+1}:=r_0$.
By Poincar\'e's theorem (see for instance~\cite{Beardon}), $G$ leaves $\mathbb{D}$ invariant and acts freely
discontinuously there. The quotient map $\mathbb{D}\rightarrow
\mathbb{D}/G=:\Sigma$ is a universal covering map of the curve $\Sigma$, which is biholomorphic
to a $(d+1)$-punctured sphere, and we consider the projective structure given by the
canonical equivariant embedding
$\mathcal{D}:\widetilde{\Sigma} = \mathbb{D}\hookrightarrow \mathbb P^1$.
By construction $\mathcal{D}$ is a parabolic type projective structure and by Proposition \ref{prop:singofgerms} a germ of its inverse $\mathcal{D}^{-1}$ has the unit circle $\mathbb{S}^1$ as natural boundary.

For the proof of item (\ref{item:dense}) consider a parabolic type projective structure $\mathcal{D}$ on a $d+2$ punctured sphere $\Sigma$ with dense monodromy, which exists by Lemma~\ref{lem:parabtypewithdensemonod}.  Proposition~\ref{prop:singofgerms} guarantees that a germ of the inverse $\mathcal{D}^{-1}$ has full singular set.

Construct the holomorphic foliation $\overline{\mathcal{F}}$ associated to the pair $(\Sigma,\mathcal{D})$, using the method
at the beginning
of this section,
by choosing
one $p_0$, setting  $n_{p_0}=0$ and for all other  $p$, setting  $n_p=1$ .
Hence the compactification $\overline{\Delta}$ of the diagonal section $\Delta$ associated to $\mathcal{D}$ is  a smooth rational curve of self intersection $+1$. This implies that the rational surface is the first Hirzebruch surface $\mathbb{F}_1$, which has a unique
exceptional-curve disjoint from $\overline{\Delta}$. Blowing down gives  $\mathbb P^2$ equipped with a holomorphic foliation $\mathcal{G}$ of degree $d$ in the case of a Fuchsian representation, and of degree $d+1$ in the case of dense representation (by (\ref{eq:tangency})). By construction there is an admissible germ for $\mathcal{\mathcal{G}}$ from  a fibre of the rational fibration (a line in $\mathbb P^2$) to the image of $\overline{\Delta}$ (another line in $\mathbb{P}^2$) that corresponds to the germ $\mathcal{D}^{-1}$ of $\overline{\mathcal{F}}$ and having  the corresponding set of singularities as desired, because the restriction of the blow down to a fibre (resp. $\overline{\Delta}$) is  biholomorphic onto its image.

By using a different  set of techniques  we provide, in section \ref{sec:explicitformulae}, explicit polynomial 1-forms for the  foliations constructed here.

\subsection{Explicit expressions}\label{sec:explicitformulae}

In some particular cases, we can obtain birational models for the previously constructed Riccati foliations. We claim that we can find all the previous phenomena within the family of degree four foliations of~$\mathbb{P}^2$ given in an affine chart by the kernel of the form
\begin{equation}\label{eq:ric-original}(y^2[2\lambda x+1]-2x^3+L)ydx+(x^4-2xy^2[\lambda x-1]+y^4-Lx)dy,\end{equation}
for $L = (1+\lambda^2)y^2x^2-2\lambda y^4$ and~$\lambda\in\mathbb{C}$. Blowing up the origin we pass from~$\mathbb{P}^2$ to the first Hirzebruch surface, which is a rational fibration with rational base.
In the coordinate~$x=ty$ of the blowup we obtain the foliation~$\mathcal{F}$ given by
\begin{equation}\label{eq:ric-explicit}\left[1+2t(\lambda -t^2)y+(t^2+t^2\lambda^2-2\lambda)y^2\right]dt-(t^4-1)dy.\end{equation}

In order to understand the foliation we will consider the foliation~$\mathcal{F}_2$ that~(\ref{eq:ric-explicit}) induces is the \emph{second} Hirzebruch surface~$\mathbb{F}_2$, by means of the chart~$(t,y)=(T^{-1},YT^2)$. The rational fibration~$\Pi:\mathbb{F}_2\to\mathbb{P}^1$ determined by~$\Pi(y,t)=[t:1]$ is transverse to~$\mathcal{F}_2$ except at the four exceptional fibers~$\{t^4-1\}$: the foliation is a Riccati one. In other words, if we let~$\Sigma=\mathbb{P}^1\setminus\{1,-1,i,-i\}$ and~$\Omega=\Pi^{-1}(\Sigma)$, $\Pi|_\Omega:\Omega\to\Sigma$ is a rational fibration everywhere transverse to~$\mathcal{F}_2$. There is a section~$\Delta$ of the fibration~$t\mapsto(0,t)$ and this section can be compactified to~$\overline{\Delta}=\{y=0\}$, which is everywhere transverse to the foliation. Since we are in a transversely projective foliation, $\Delta$ inherits a projective structure that can compared to the canonical one:

\begin{proposition}\label{prop:schwexplicit} The Schwarzian derivative of the projective structure induced by~$\mathcal{F}$ with respect to the canonical one in~$\Delta$ is given by
\begin{equation}\label{eq:schw-explicit}2\frac{4t^2+\lambda(t^4-1)}{(t^4-1)^2}dt^2\end{equation}
\end{proposition}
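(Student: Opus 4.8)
The plan is to compute the Schwarzian derivative of the developing map of the projective structure carried by $\Delta$ directly from the differential equation~(\ref{eq:ric-explicit}). Since the foliation is Riccati with respect to the fibration $\Pi$, the leaves are graphs $y = y(t)$ satisfying a Riccati ODE in $t$; restricting~(\ref{eq:ric-explicit}) to a leaf and solving for $dy/dt$ gives
\[
\frac{dy}{dt} = \frac{1 + 2t(\lambda-t^2)y + (t^2+t^2\lambda^2 - 2\lambda)y^2}{t^4-1}.
\]
The holonomy of the foliation along loops in $\Sigma$ identifies the fibre $\mathbb P^1$ with $\overline{\Delta}$ (via the transversality of $\overline{\Delta}$ to $\mathcal F$), and the resulting family of Möbius maps $t\mapsto$ (fibre coordinate) is precisely a developing map $\mathcal D$ of the projective structure on $\Delta$. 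So I need the Schwarzian of a fundamental solution of this Riccati equation as a function of $t$.

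The standard device is to linearize: writing a Riccati equation $y' = a(t) + b(t)y + c(t)y^2$ in terms of $y = -u'/(c\,u)$ turns it into a second-order linear ODE $u'' + P(t)u' + Q(t)u = 0$, and the ratio of two independent solutions of this linear equation is exactly the developing map of the induced projective structure. The key classical fact (see~\cite{Hille}) is that the Schwarzian derivative of such a ratio equals $2Q - \tfrac12 P^2 - P'$ after reducing to normal form; concretely, if one first normalizes away the first-order term, the projective structure has Schwarzian $\{\mathcal D, t\} = 2\,I(t)$ where $I$ is the invariant built from the coefficients. So the first step is to read off $a = 1/(t^4-1)$, $b = 2t(\lambda-t^2)/(t^4-1)$, $c = (t^2+t^2\lambda^2-2\lambda)/(t^4-1)$; the second step is to assemble the Schwarzian from the formula $\{\mathcal D, t\} = 2ac - \tfrac12 b^2 + \tfrac12 b\,\tfrac{c'}{c} - \big(\tfrac{b}{2} - \tfrac{c'}{2c}\big)' + \dots$ (the precise combination being the classical Riccati-to-Schwarzian identity), and the third step is the algebraic simplification showing this collapses to $2(4t^2 + \lambda(t^4-1))/(t^4-1)^2$.

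I expect the main obstacle to be purely computational bookkeeping: the intermediate expression involves $c'/c$ and its derivative, which introduce the factor $t^2+t^2\lambda^2-2\lambda$ in denominators that must cancel against the numerator contributions from $ac$ and $b^2$ before one recovers the clean answer with only $(t^4-1)^2$ downstairs. A useful consistency check along the way is to verify the parabolic-type condition from Subsection~\ref{s:singsetforgerms}: near each puncture $t_0 \in \{1,-1,i,-i\}$ the expression~(\ref{eq:schw-explicit}) must have leading term $\tfrac{1}{2(t-t_0)^2}dt^2$, which follows since $(t^4-1)^2$ has a double zero at each $t_0$ and the numerator $2(4t^2 + \lambda(t^4-1))$ takes the value $8t_0^2 = 8/t_0^2$ there while $(t^4-1)' = 4t^3$ gives $((t^4-1)^2)'' \sim 2(4t_0^3)^2 = 32t_0^6$ to leading order, so the ratio of leading coefficients is $2\cdot 8t_0^2 / (32 t_0^6) \cdot t_0^4 = 1/(2)$ after matching powers — confirming the residue and hence that the formula is at least compatible with the parabolic normalization. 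An alternative, perhaps cleaner, route avoiding the linearization would be to exhibit an explicit change of coordinate on $\Delta$ transporting~(\ref{eq:ric-explicit}) to the model~(\ref{singularity}) near each puncture and to pin down the global quadratic differential by its poles and leading terms using the fact, recalled in Subsection~\ref{s:singsetforgerms}, that the space of parabolic-type structures is an affine space over quadratic differentials with at most simple poles at the $p_i$; but since the punctures here are exactly the four sixth-roots-of-unity-type points $t^4 = 1$, the direct computation is the most transparent and is what I would carry out.
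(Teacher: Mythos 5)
Your plan takes the same broad route as the paper --- linearize the Riccati equation and compute the Schwarzian of a ratio of two solutions of a second-order linear ODE --- but the specific linearization you pick computes the Schwarzian of the \emph{wrong} projective structure. The substitution $y=-u'/(c\,u)$ identifies the pole $y=\infty$ with the zero set $\{u=0\}$, so the ratio $u_1/u_2$ of two solutions of your ODE $u''+Pu'+Qu=0$ is a developing map for the projective structure carried by the transversal $\{y=\infty\}$, \emph{not} by $\Delta=\{y=0\}$. These are two different transversals to $\mathcal F$ carrying genuinely different projective structures (same monodromy, different Schwarzian). Concretely, with $P=-(c'/c+b)$ and $Q=ac$, the quantity $2Q-\tfrac12 P^2-P'$ involves $c'/c$ and $c''/c$ and therefore has extra poles along $\{c=0\}$, i.e.\ at $(1+\lambda^2)t^2=2\lambda$, which do not appear in~(\ref{eq:schw-explicit}); a spot-check at $\lambda=0$, $t=1/2$ gives roughly $-13.7$, far from $8t^2/(t^4-1)^2\approx 2.28$. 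Your residue check near $t^4=1$ only validates the target formula, not the derivation, so it cannot catch this mismatch.

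The paper avoids the pitfall by working with the $2\times2$ linear system~(\ref{relations}) for the fundamental matrix and reading off the developing map as $\phi(t)=h^{-1}(t)(0)=-b(t)/a(t)$, which is exactly the holonomy image of $(t,0)\in\Delta$; it then computes $\phi''/\phi'=\alpha_0'/\alpha_0-\alpha_1-2(c/a)\alpha_0$ and assembles the Schwarzian from that. In scalar terms, the top-row entries $a,b$ of the matrix solution satisfy the second-order ODE with $P_1=-(\alpha_1+\alpha_0'/\alpha_0)$ and $Q_1=\alpha_0\alpha_2-\alpha_1'+\alpha_1\alpha_0'/\alpha_0$ --- not the ODE you wrote for $u$. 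So the fix to your scalar route is either to take the ratio of \emph{derivatives} $u_1'/u_2'$ (these are the first components of the vector solutions, which vanish where $y=0$), or equivalently to first pass to $1/y$ (a Riccati equation with coefficients $(-\alpha_2,-\alpha_1,-\alpha_0)$) and only then apply your substitution; this replaces $c'/c$ by $\alpha_0'/\alpha_0$ throughout, the $(t^4-1)^{-2}$ denominators become manifest, and the computation does close up to~(\ref{eq:schw-explicit}).
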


\begin{proof}
Recall that if $H(t)=\left(\begin{array}{cc} a(t) & b(t) \\ c(t) & d(t) \end{array}\right)$ is a solution to the linear differential equation
\begin{equation}\label{relations}\frac{d}{dt}H(t)=\left(\begin{array}{rr}\frac{1}{2}\alpha_1 & \alpha_0 \\ -\alpha_2 & -\frac{1}{2}\alpha_1 \end{array}\right)H(t),\end{equation}
the general solution to the Riccati equation
\begin{equation}\label{ric}[\alpha_2(t)y^2+\alpha_1(t)y+\alpha_0(t)]\frac{\partial}{\partial y}+\frac{\partial}{\partial t},\end{equation}
is given by
$$h(t)(y_0)=\frac{a(t)y_0+b(t)}{c(t)y_0+d(t)}.$$
This implies that the flow of the vector field~(\ref{ric})
in time~$s$ is
$(y,t)\stackrel{s}{\longrightarrow}(h(t+s)h^{-1}(t)(y),t+s)$ and hence
$(y,t)\stackrel{t_0-t}{\longrightarrow}(h(t_0)h^{-1}(t)(0),t_0)$.
The holonomy from~$\Delta$ to the fiber~$t=t_0$ is in consequence, up to a Moebius transformation, given by
$\phi(t)=h^{-1}(t)(0)=-b(t)/a(t)$. We will now calculate its Schwarzian derivative. By~(\ref{relations}), we have that
$$\frac{\phi''}{\phi'}=\frac{\alpha_0'}{\alpha_0}-\alpha_1-2\frac{c}{a}\alpha_0.$$
Developing~$\{\phi(s),s\}=(\phi''/\phi')'-\frac{1}{2}(\phi''/\phi')^2$
using~(\ref{relations}) and the above formula, we obtain
$$\{\phi(t),t\}dt^2=\left[2\alpha_0\alpha_2-\frac{1}{2}\alpha_1^2-\alpha_1\frac{\alpha_0'}{\alpha_0}-\alpha_1'-\frac{3}{2}\left(\frac{\alpha_0'}{\alpha_0}\right)^2+\frac{\alpha_0''}{\alpha_0}\right]dt^2.$$
It is well-defined even in the presence of multivaluedness for the  flow of the vector field.
For equation~(\ref{eq:ric-explicit}), this is exaclty~(\ref{eq:schw-explicit}).
\end{proof}

Notice that for every fourth root of unity~$t_0$, the expression~(\ref{eq:schw-explicit})  is, at~$\tau=t-t_0$, of the form
$(\frac{1}{2}\tau^{-2}+\cdots)d\tau^2$ and this implies that, at each one of these four points in~$\overline{\Delta}\setminus\Delta$, the monodromy is parabolic (the point at~$t=\infty$ is a regular point of the projective structure).

The monodromy representation~$\rho:\pi_1(\Delta)\to \mathrm{PSL}(2,\mathbb{C})$ of the projective structure \emph{is} the group of the Riccati equation. If~$\gamma:[0,1]\to\Delta$ is a closed path, since the holonomy from~$\Delta$ to a generic fiber~$F$ is a developing map~$\mathcal{D}$ of the projective structure and~$\mathcal{D}(\gamma\cdot p)=\rho(\gamma)\mathcal{D}(p)$, the difference of the holonomies is exaclty~$\rho(\gamma)$.

We can pull-back the fibration~$\mathbb{P}^1\to\Omega\to \Sigma$ to~$\widetilde{\Sigma}$ via~$\Pi:\widetilde{\Sigma}\to\Sigma$ and obtain in this way a fibration~$\mathbb{P}^1\to\widetilde{\Omega}\to \widetilde{\Sigma}$ endowed with a foliation~$\widetilde{\mathcal{F}}_2$ that is transverse to the fibration. Up to a biholomorphism we can suppose that~$\widetilde{\Omega}=\widetilde{\Sigma}\times\mathbb{P}^1$, and that~$\widetilde{\mathcal{F}}_2$ is the horizontal foliation in~$\widetilde{\Sigma}\times\mathbb{P}^1$. We may recover~$\Omega$ by acting diagonally by~$\pi_1(\Sigma)$ upon~$\widetilde{\Sigma}\times\mathbb{P}^1$ by deck transformations in the first factor and by some representation~$\mu:\pi_1(\Sigma)\to \mathrm{PSL}(2,\mathbb{C})$ in the second one. This last representation must be the monodromy one. Furthermore, in~$\widetilde{\Sigma}\times\mathbb{P}^1$ the section~$\Delta$ becomes the graph of a function~$f:\widetilde{\Sigma}\to\mathbb{P}^1$, but the projection onto the second coordinate must be the developing map. We conclude that the foliations considered here belong to the family of previously constructed foliations. They correspond to the choice of~$n_p=0$ in each one of the four punctures of~$\Sigma$. The surface~$\mathbb{F}_2$ appears as a consequence of~formula~(\ref{eq: self-intersection of diagonal}). \\

Since the dimension of the space of parabolic type projective structures on a four punctured sphere is one, equation~(\ref{eq:schw-explicit})  is a parametrization of that space in the case~$\Sigma=\mathbb{P}^1\setminus\{1,-1,i,-i\}$. For the projective structure corresponding to the (Fuchsian) uniformization, the quadratic differential must be invariant by the group of biholomorphisms of~$\Sigma$. Under the symmetry~$t\mapsto it$ the quadratic differential~(\ref{eq:schw-explicit}) is the same except for a change of sign in~$\lambda$, this is, only $\lambda=0$ may give a quadratic differential invariant by the biholomorphisms of~$\Sigma$ and corresponds thus to the uniformization parameter. Among the rest of parameters we find quasi-Fuchsian representations (for sufficiently small $\lambda$) and dense representations, as was shown in Lemma \ref{lem:parabtypewithdensemonod}.

\subsection{Proof of Theorem~\ref{thm:ricperturbed}}

We will consider the foliation~(\ref{eq:ric-original}) with~$\lambda=0$ and the lines~$C_{p,\epsilon}$ given by~$\{x-yp-\epsilon=0\}$ for~$|p|<\frac{1}{4}$. After the blowup leading to the foliation~(\ref{eq:ric-explicit}), the strict transform of~$C_{p,\epsilon}$ becomes the curve~$y=\epsilon(t-p)^{-1}$. This curve does not intersect~$\Delta$. Let~$K\subset\overline{\Delta}$ be the annulus~$\frac{1}{2}<|t|<2$. The complement of~$K$ in~$\overline{\Delta}$ is a compact set free of singularities of the projective structure (the four punctures are contained in~$K$). If~$\epsilon$ is small enough, $C_{p,\epsilon}$ is close to~$\overline{\Delta}$ along~$K$ and thus, since~$\overline{\Delta}$ is transverse to~$\mathcal{F}$,  we have a holonomy map~$f:K\to C_{p,\epsilon}$ that is a biholomorphism onto its image. Let~$q\in\mathbb{C}$, $q^4\neq 1$. We have shown that there exists an open disk~$D\subset \{t=q\}$ and a holonomy map~$h:D\to \Delta$ that realizes the Fuchsian uniformization of~$\Delta$ and that has~$\partial D$ as natural boundary. Notice that, for every~$z\in\partial D$ we can find a path~$\gamma:[0,1]\to \{t=q\}$ such that~$\gamma(1)=z$ and such that~$\gamma(t)\in h^{-1}(K)$ for~$t<1$.  Consider the restriction~$h|_{h^{-1}(K)}:h^{-1}(K)\to K$. It is still a holonomy map and, through~$\gamma$, has in~$z$ a singularity for its analytic continuation. Hence, for every~$z\in\partial D$, $f\circ h|_{h^{-1}(K)}$ is a holonomy map from~$\{t=q\}$ to~$C_{p,\epsilon}$ for which~$z$ is a singularity for its analytic continuation (see Figure~\ref{fig:schw-perturb}). Notice that, because~$\mathcal{F}$ is both transverse to~$\Delta$ and to~$\{t=q\}$ at the intersection of these curves, $\Delta$ intersects~$\{t=q\}$ in the interior of~$D$. Let~$B\subset D$ be a small closed ball containing this point of intersection. If~$\epsilon'$ is small enough, $\mathcal{F}$ establishes a holonomy diffeomorphism $g:(\{t=q\}\setminus B)\to C_{q,\epsilon'}$ defined, in particular, in~$\partial D$. We have the holonomy~$f\circ h|_{h^{-1}(K)}\circ g^{-1}$ between~$C_{q,\epsilon'}$ and~$C_{p,\epsilon}$. The holonomy is defined along~$g\circ\gamma(t)$ for every~$t<1$ but cannot be extended to~$t=1$. In other words, \emph{the points of the curve $g(\partial D)$ are singularities for the analytic continuation of an admissible germ from~$C_{q,\epsilon'}$ to~$C_{p,\epsilon}$}. This ends the proof of the Theorem. \\

It is worth noticing that, if~$\epsilon$ is small enough, the projective structure in~$C_{p,\epsilon}$ can be completely understood: It is a bubbling (see~\cite{GKM}) over the projective structure of~$\Delta$ along two arcs, one of them close to~$\{t=p\}$ and the other close to~$\{t=\infty\}$. This is, however, unnecesary for the proof of the Theorem.


\begin{figure}
\centering
\includegraphics[width=5in]{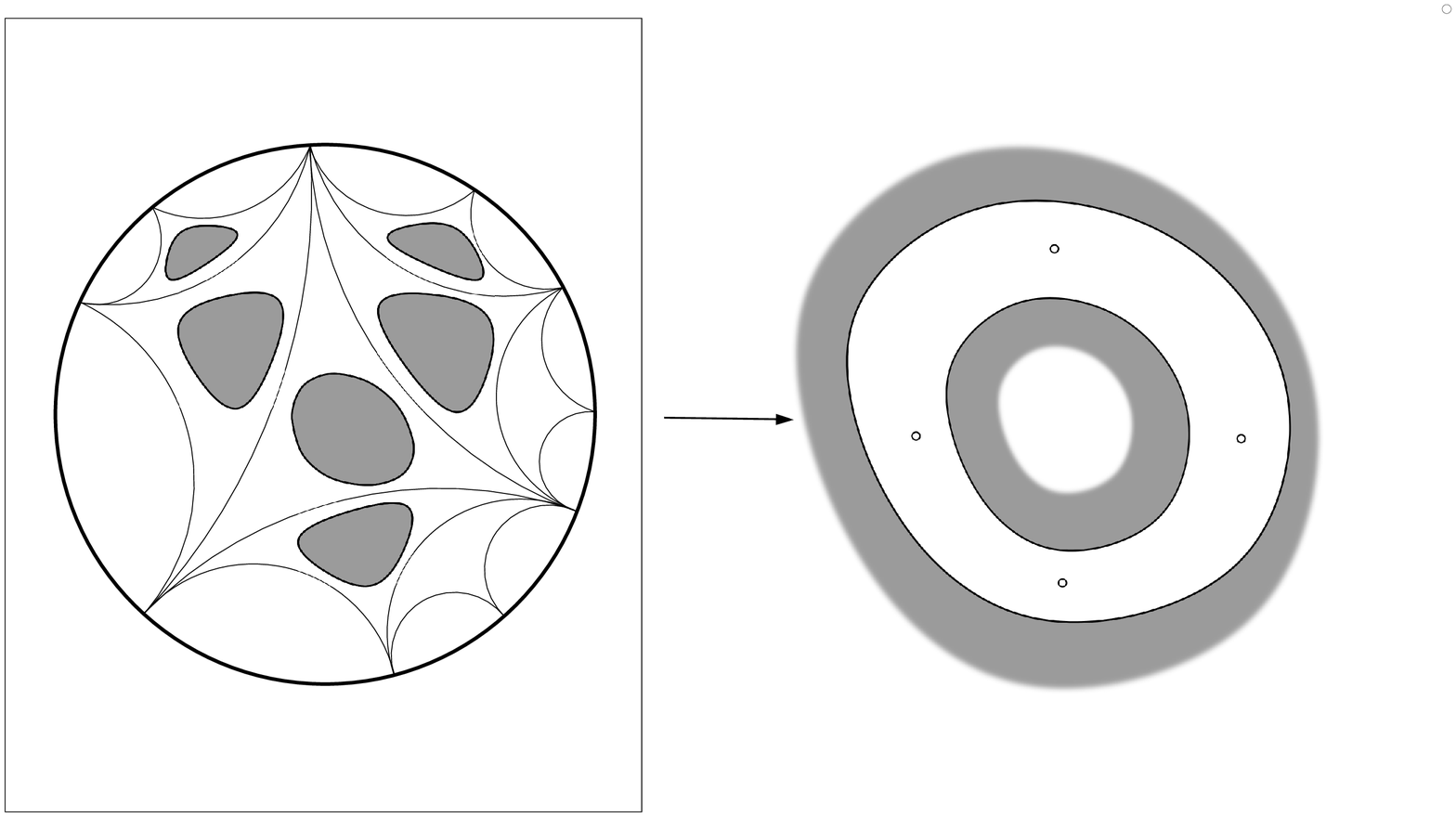}
  \caption{Singularities of the holonomy in the perturbed line: through the holonomy relation, the complement of the shaded part within the circle realizes the universal covering of the annulus. This covering cannot be extended to the circle.}\label{fig:schw-perturb}
\end{figure}

\section{Generic foliations}\label{sec:cantor}
\subsection{Proof of Theorem~\ref{thm:cantor}}

The holonomy map with  natural boundary (Theorem \ref{thm:circle}) was
constructed using
the quotient map of the domain of discontinuity of a Fuchsian group. Analogous maps, closely related to Schottky groups, can in fact be seen as the holonomy of generic foliations of the complex projective plane. The
existence of rich dynamics in the holonomy pseudo-group will underlie their construction.

Before proceeding to an explanation, we need to introduce the concept of $\mathcal F$-homotopy, and to discuss a subtlety in the definition of  the holonomy map. Given a compact topological space $X$, an algebraic foliation of $\mathbb P^2$, and continuous maps $F_0,F_1: X \rightarrow  \mathbb P^2$, an $\mathcal F$\textit{-homotopy} between $F_0$ and $F_1$ is a continuous family of leafwise paths $\Gamma^x: [0,1] \rightarrow \mathbb P^2 $ such that $\Gamma^x (i) = F_i (x)$ for $i=0,1$ and every $x\in X$. Sometimes the $\mathcal F$-homotopy will be denoted by $\Gamma(s,x) = \Gamma^x(s)$. Now, define a \textit{holonomy tube} between two open sets $U,V$ contained in  algebraic curves, by an $\mathcal F$-homotopy between $id_{\bar U}$ and some map $h:{\bar U} \rightarrow {\bar V}$. Of course, the map $h$ associated to a holonomy tube is a holonomy map, but strictly speaking tubes have more structure, the reason being that in a holonomy map the homotopy class of the leafwise path is not uniquely determined.

\begin{lemma}\label{lem:schottky}
Under the hypotheses of Theorem \ref{thm:cantor} there exists a line $L$, an open disc $B\subset L$ and a pair of holonomy tubes between $\overline{B}$ and the pair of subsets,  $h_i \overline{B}\subset B$, where the  associated holonomy maps $h_1,h_2:\overline{B}\rightarrow B$ are injective and such that
$$h_1(\overline{B})\cap h_2(\overline{B})=\emptyset.$$
\end{lemma}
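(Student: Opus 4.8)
\textbf{Proof plan for Lemma~\ref{lem:schottky}.}
The plan is to build a Schottky-like pair of holonomy contractions out of the hyperbolic singularities of $\mathcal F$. Since $\mathcal F$ has only hyperbolic singularities (eigenvalue ratio $\lambda_2/\lambda_1 \notin \mathbb R$), pick a singular point $s$; in suitable local coordinates $\mathcal F$ is generated by a linear vector field $\lambda_1 u\,\partial_u + \lambda_2 v\,\partial_v$, and the holonomy of the leaf $\{v=0\}$ around the origin, computed on a transversal $\{u=\text{const}\}$, is the linear germ $v\mapsto e^{2\pi i \lambda_2/\lambda_1}v$. Because $\lambda_2/\lambda_1$ is not real, this multiplier has modulus $\neq 1$; replacing the loop by its inverse if needed, we may assume $|e^{2\pi i\lambda_2/\lambda_1}|<1$, so that a small separatrix loop at $s$ gives a \emph{linear contracting} holonomy germ on a local transversal. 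First I would fix one such singularity and record this contracting local holonomy $g_0$, together with a disc $D_0$ on a transversal disc $T_0 \ni$ basepoint $x_0$ on which $g_0$ is defined, injective, and strictly contracting, with $\overline{g_0(D_0)}\subset D_0$.

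Next I would globalize and transport this picture to a line. Since $\mathcal F$ has no invariant algebraic curve, a generic line $L$ is transverse to $\mathcal F$ and meets every leaf (by the maximum principle of~\cite{CLS} cited in the introduction); in particular the leaf through $x_0$ meets $L$ transversally at some point $p$. Choose a leafwise path $\gamma$ from $p$ to $x_0$; conjugating the local contraction $g_0$ by the holonomy tube along $\gamma$ produces, on a disc $B_0\subset L$ around $p$, a holonomy tube whose associated map $\tilde g_0 : \overline{B_0}\to B_0$ is injective with $\tilde g_0(\overline{B_0})\subset B_0$ (it is a topological, not linear, contraction in general, but it is still a strict contraction for the Kobayashi/Poincaré metric of $B_0$ onto a relatively compact subdisc once $B_0$ is chosen small enough, which is all we need). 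Now to get \emph{two} such maps with disjoint images I would use a second, essentially free, motion: since leaves of a foliation with a hyperbolic singularity accumulate richly (and, by the no-invariant-curve hypothesis, the leaf of $x_0$ is not algebraic, hence not closed), there is a leafwise loop $\ell$ based at $p$ whose holonomy $\sigma$ moves $p$ a definite amount, or at least moves the fixed point $q_0=\tilde g_0(q_0)$ of $\tilde g_0$ off itself. Set $h_1 = \tilde g_0$ and $h_2 = \sigma\circ \tilde g_0\circ \sigma^{-1}$ (as holonomy tubes, composing the corresponding $\mathcal F$-homotopies), shrinking $B\subset B_0$ around $p$ so that both are defined on $\overline B$, injective, with images in $B$. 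The two images are small neighbourhoods of the two distinct fixed points $q_0$ and $\sigma(q_0)$; after a further shrinking of $B$ (replacing $h_i$ by $h_i$ composed with enough iterates of itself, i.e. by $h_i^{k}$ for large $k$, which only improves contraction) their closures become disjoint. Renaming $h_i^{k}$ as $h_i$ gives the statement.

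The step I expect to be the genuine obstacle is producing the second generator with guaranteed disjoint image while staying honestly inside the holonomy pseudo-group of $\mathcal F$ (i.e. realizing everything by \emph{holonomy tubes} over a single line $L$, not merely abstractly). Two things must be checked carefully here: (i) that there really is a leafwise loop $\ell$ at $p$ displacing the fixed point of the first contraction — this is where non-triviality of the holonomy, guaranteed by hyperbolicity plus absence of invariant algebraic curves, is used, perhaps via a second hyperbolic singularity with a different separatrix, or via the density/richness results for leaves of such foliations; and (ii) that after conjugation and the passage to high iterates one can pick a \emph{single} disc $B$ on which both $h_1,h_2$ are simultaneously defined, injective, and have closures of images inside $B$ and disjoint from each other. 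Both are soft once one has the contracting germ, but the bookkeeping of nested discs and of composing $\mathcal F$-homotopies (to keep the tube structure) is the part that needs care; the contraction estimates themselves are elementary, via the Schwarz lemma applied to the holomorphic self-maps $h_i$ of the disc.
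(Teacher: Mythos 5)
Your overall strategy---produce one strictly contracting holonomy germ, transport it to a line, and then conjugate/translate it to obtain a second contraction with disjoint image---is the same in spirit as the paper's. But there is a real gap exactly where you flag it: you assert, without proof, the existence of a holonomy $\sigma$ that displaces the fixed point $q_0$ of $\tilde g_0$. ``Leaves accumulate richly'' and ``non-triviality of the holonomy'' are not arguments; the fixed point $q_0$ of your separatrix contraction corresponds to a specific leaf (the separatrix itself), and you need a holonomy loop that moves this particular point, which does not follow just from hyperbolicity of the singularity.

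The paper closes this gap by working from the start inside a pseudo-minimal set $\mathcal M$ of $\mathcal F$. This does two things at once. First, it guarantees a contracting germ $h_\gamma$ whose fixed point $0$ lies in $\mathcal M$: either $\overline{\mathcal M}$ meets a (hyperbolic) singularity, so a separatrix sits in $\mathcal M$ and its loop holonomy is contracting; or $\mathcal M$ is an exceptional minimal set and the contracting germ is supplied by Bonatti--Langevin--Moussu. Second, the absence of invariant algebraic curves forces $\mathcal M$ to be \emph{transversally perfect}; that is exactly what produces, for any small disc around $0$, a holonomy $\psi$ moving $0$ to a nearby but distinct point of $\mathcal M$. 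This is the existence statement you leave unjustified. Also note the paper's second generator is $h_2 = \psi\circ h_\gamma^n$ rather than a conjugate $\sigma\circ h_1\circ\sigma^{-1}$: by taking $n$ large so that $h_\gamma^n(B)\Subset U\cap\psi^{-1}(V)$ with $V\Subset B\setminus\overline{h_\gamma(B)}$, the disjointness $h_1(\overline B)\cap h_2(\overline B)=\emptyset$ is automatic and no further iteration or shrinking is needed; your conjugation route requires additional care about domains (when you shrink $B$ you must keep both fixed points $q_0$ and $\sigma(q_0)$ inside it) that the paper sidesteps.

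In short: picking a hyperbolic singularity directly is fine for the \emph{first} contraction, but the existence of the displacing holonomy is not ``soft''---it is where the no-invariant-algebraic-curve hypothesis actually does its work, via the transversal perfectness of a minimal set. Without this (or an equivalent substitute), the proposal does not yet prove the lemma.
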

\begin{proof}
  Let $\mathcal{M}$ be a pseudo-minimal set for  $\mathcal{F}$, that
  is, a nonempty closed invariant subset of
  $\mathbb  P^2\setminus{\emph{Sing}(\mathcal{F})}$ such that every
  leaf
    $\mathcal{F}_x \subset\mathcal{F}$ is dense in
  $\mathcal{M}$. Existence  of $\mathcal{M}$ is standard. It can be shown by using
 the
fact that no leaf of $\mathcal{F}$ is contained in a ball,
so no leaf is localized near a singularity (see \cite{Zak}).
  We will now produce
a loop
  $\gamma:{\mathbb {R}/\mathbb {Z}}\rightarrow \mathcal{F}_x\rightarrow\mathcal{M}$,
for some $x$,
and a transverse open disc $B$ contained in a line in $\mathbb  P^2$
such
that the holonomy map
$h_{\gamma}$ satisfies
$h_{\gamma}(\overline {B}) \Subset  B$.

Suppose that $\overline{\mathcal{M}}$ contains
 a singularity $p$ of $\mathcal{F}$. By
hypothesis $p$ is hyperbolic, so
restricting  to   the domain of linearization of $\mathcal{F}$ at
 $p$, a  leaf   of
$\mathcal{M}$
accumulates in a  separatrix $S$  of $p$,
 so
 $S\subset\mathcal{M}$.
 There is  then a loop $\gamma\subset S$   and a transverse disc
$B$ producing the hyperbolic holonomy germ
$\overline {h_{\gamma}(B)} \Subset  B$ as desired.
On the other hand, if $\overline{\mathcal{M}}$ does not
contain a singularity of $\mathcal{F}$, then $\mathcal{M}$ is an
exceptional minimal set and the existence of the germ $h_\gamma$ was
proven in \cite{BLM}.
In any case ${h_{\gamma}}$ has a fixed point $0$,
(in a coordinate $z:B\rightarrow\mathbb{D}$)
and by  the Schwarz lemma
${h^n_{\gamma}(B)} \rightarrow 0$ uniformly in $n$. Shrinking $B$ if necessary, we can construct a holonomy tube from $\overline{B}$ to $h_{\gamma}(\overline{B})$, that we denote by $\Gamma':[0,1]\times B \rightarrow\mathbb P^2$.

Since $\mathcal{F}$ has no invariant algebraic curves, $\mathcal{M}$
is transversally a perfect set. We can choose open sets (by increasing $n$ as needed)
$U,V\subset B$  such that
$z=0\in U\Subset\overline{U}\Subset h(B)$,
$\overline{V}\Subset B\setminus \overline{h(B)} $,
and there exists a holonomy
diffeomorphism $\psi:U\rightarrow V$ with $\psi(0) \in V $. Shrinking $U$ if necessary, we consider a holonomy tube $\Gamma'': [0,1]\times U \rightarrow \mathbb P^2$ from $U$ to $V$ whose associated map is $\psi$.
 Now
for sufficiently large $n\in\mathbb{N}$,
$h_{\gamma}^{n}(B)\Subset U\cap  \psi^{-1}(V) $,
so  by
defining
$h_1=h_\gamma  $
and $h_2=\psi\circ h_{\gamma}^{n}  $,
we get two injective holonomy maps $h_1,h_2:\overline{B}\rightarrow B$ of $\mathcal{F}$ satisfying $$h_1(\overline{B})\cap h_2(\overline{B})=\emptyset.$$
We define the tubes $\overline{\Gamma^1} = \Gamma'$ and  $\overline{\Gamma^2} = (\Gamma')^{*n} * \Gamma''$, whose associated holonomy maps are respectively $h_1$ and $h_2$.
\end{proof}

We denote by $H = \langle h_1,h_2\rangle $ the semi-group generated by the two holonomy maps of Lemma~\ref{lem:schottky}. The action of $H$ on $B$ is a local semi-group version of the action of a Schottky group on $\mathbb{P}^1$. In particular
there exists a unique closed $H$-invariant set, and   it is  contained in the closure of every $H$-orbit. This     \textit{limit set}   is
\[  \Lambda_H := \bigcap_{n\in \mathbb N^*} \left( \bigcup_{(i_1,\ldots i_n) \in \{1,2\}^n} h_{i_1}\circ \ldots \circ h_{i_n} (\overline{B}) \right) .\]
The following  fairly standard Lemma is  included  for lack of a suitable reference:
\begin{lemma}\label{lem:limit}
  Under the hypothesis of~\ref{lem:schottky}, the set $\Lambda_H$ is a Cantor set in $B$. Moreover, $H$ acts freely discontinuously on $U_H=B\setminus \Lambda_H$.
\end{lemma}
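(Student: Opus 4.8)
The plan is to mimic the classical ping-pong argument for Schottky groups, transplanted to the local semi-group setting provided by Lemma~\ref{lem:schottky}. Write $W_n = \bigcup_{(i_1,\ldots,i_n)\in\{1,2\}^n} h_{i_1}\circ\cdots\circ h_{i_n}(\overline{B})$, so that $\Lambda_H = \bigcap_n W_n$. First I would observe that the $W_n$ form a decreasing sequence of nonempty compact sets: decreasing because $h_1(\overline B)\cup h_2(\overline B)\subset B$ forces $h_{i_1}\cdots h_{i_{n+1}}(\overline B)\subset h_{i_1}\cdots h_{i_n}(B)\subset h_{i_1}\cdots h_{i_n}(\overline B)$, and compact/nonempty because each $h_i$ is a continuous injection on $\overline B$. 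Hence $\Lambda_H$ is a nonempty compact set, and it is totally disconnected because at level $n$ it is split among the $2^n$ pairwise-disjoint ``cylinders'' $h_{i_1}\cdots h_{i_n}(\overline B)$ — here disjointness of cylinders with distinct first letters is exactly $h_1(\overline B)\cap h_2(\overline B)=\emptyset$ from Lemma~\ref{lem:schottky}, and for the other letters one applies the injections $h_{i_1},\ldots$ to this same disjointness.

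Next I would establish that the diameters of the cylinders $h_{i_1}\circ\cdots\circ h_{i_n}(\overline B)$ tend to $0$ uniformly as $n\to\infty$. For this I would use the Schwarz lemma in the coordinate $z:B\to\mathbb{D}$: since $h_1(\overline B)$ and $h_2(\overline B)$ are compactly contained in $B$, each $h_i$ is a strict contraction for the hyperbolic (Poincar\'e) metric of $B$, with a contraction factor $k<1$ that can be taken uniform over $\{1,2\}$ (it depends only on how deep $h_1(\overline B)\cup h_2(\overline B)$ sits inside $B$, via the Schwarz–Pick lemma). Composing, $h_{i_1}\circ\cdots\circ h_{i_n}$ contracts the Poincar\'e metric of $B$ by at least $k^n$; since $\overline B$ has finite Poincar\'e diameter, the cylinders have Poincar\'e diameter $\le k^n \operatorname{diam}_{\mathrm{hyp}}(\overline B)\to 0$, and they all lie in the fixed compact set $h_1(\overline B)\cup h_2(\overline B)\Subset B$ on which the Poincar\'e and Euclidean metrics are comparable, so Euclidean diameters go to $0$ as well. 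Combined with the nesting and disjointness, this gives a homeomorphism $\{1,2\}^{\mathbb N}\to\Lambda_H$ sending $(i_n)$ to the unique point of $\bigcap_n h_{i_1}\circ\cdots\circ h_{i_n}(\overline B)$; in particular $\Lambda_H$ is perfect, hence a Cantor set. The same diameter estimate shows $\Lambda_H$ is contained in the closure of every $H$-orbit and is the unique closed $H$-invariant set, as asserted in the text preceding the lemma.

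Finally, for the free proper discontinuity of $H$ on $U_H = B\setminus\Lambda_H$: given a compact set $Q\subset U_H$, I would show only finitely many elements $h_{i_1}\circ\cdots\circ h_{i_n}$ of $H$ meet $Q$, i.e.\ satisfy $h_{i_1}\circ\cdots\circ h_{i_n}(\overline B)\cap Q\neq\emptyset$. Since $\operatorname{dist}(Q,\Lambda_H)=\rho>0$ and the cylinder diameters go to $0$ uniformly, for $n$ large every cylinder of level $n$ lies within distance $<\rho$ of some point of $\Lambda_H$ (because $\bigcap$ of the nested cylinders containing it is a point of $\Lambda_H$), hence is disjoint from $Q$; thus only words of bounded length can meet $Q$, and there are finitely many of those. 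Freeness of the action amounts to: distinct words $w\neq w'$ in $\{1,2\}^*$ give distinct maps, which follows because $w(\overline B)$ and $w'(\overline B)$ are disjoint once $w,w'$ disagree in some letter (again by injectivity of the $h_i$ and $h_1(\overline B)\cap h_2(\overline B)=\emptyset$), while if one of $w,w'$ is a prefix of the other, say $w'=w u$ with $u$ nonempty, then $w'(\overline B)=w(u(\overline B))\subsetneq w(\overline B)$ is a proper subset, so $w'\ne w$ as maps. The only mildly delicate point — and the one I would be most careful about — is keeping the contraction factor $k$ genuinely uniform in $i\in\{1,2\}$ and making precise the passage between the hyperbolic and Euclidean metrics on the compactly contained set $h_1(\overline B)\cup h_2(\overline B)$; everything else is the standard Schottky ping-pong bookkeeping.
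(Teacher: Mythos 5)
Your proof is correct in substance but takes a genuinely different route from the paper's, so let me compare. You establish the shrinking of the cylinders $h_{i_1}\circ\cdots\circ h_{i_n}(\overline{B})$ via the Schwarz--Pick lemma: since $h_1(\overline{B})\cup h_2(\overline{B})\Subset B$, each $h_i$ is a uniform hyperbolic contraction of $B$ by some factor $k<1$ (indeed, if the image lies in a disc of Euclidean radius $r<1$ in the coordinate $z$, the inclusion $\mathbb{D}_r\hookrightarrow\mathbb{D}$ contracts the Poincar\'e metric everywhere by at least $r$, and the Schwarz--Pick inequality does the rest). The paper instead proceeds via conformal moduli of annuli: it sets $c<\min_i m(B\setminus h_i(B))$ and shows by induction, using conformal invariance of the modulus under the injective holomorphic maps $h_I$ together with the superadditivity (Gr\"otzsch) inequality, that $m(B\setminus h_I(B))>c\cdot N$ for words $I$ of length $N$, so that the nested sets degenerate to single points. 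The two arguments are near-dual formulations of the same conformal-geometric fact, but yours is more elementary (no extremal length), while the paper's bypasses any need for a uniform contraction constant and phrases the whole estimate in a conformally invariant way. You also spell out freeness and proper discontinuity, which the paper asserts but does not actually argue in its proof.

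One small slip to fix: you invoke ``$\overline{B}$ has finite Poincar\'e diameter,'' but in the coordinate $z:B\to\mathbb{D}$ the hyperbolic metric of $B$ has infinite diameter on $\overline{B}$. The repair is immediate and does not change your argument: $h_{i_n}(\overline{B})$ lies in the compact set $h_1(\overline{B})\cup h_2(\overline{B})\Subset B$, which does have finite hyperbolic diameter, and the remaining $n-1$ compositions each contract that diameter by $k$, so $\operatorname{diam}_{\mathrm{hyp}}\bigl(h_{i_1}\circ\cdots\circ h_{i_n}(\overline{B})\bigr)\le k^{\,n-1}\operatorname{diam}_{\mathrm{hyp}}\bigl(h_1(\overline{B})\cup h_2(\overline{B})\bigr)\to 0$.
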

\begin{proof}
 Let $N$ be a natural number and for each finite ordered set $I=(i_n)_{n\le N}\in\{1,2\}^{N}$ define $h_I=h_{i_1}\circ\cdots\circ h_{i_{N}}\in H$, and say it has {\it length} $N$. By a trivial induction, each $h_I:B\rightarrow B$ is an injective holomorphic mapping. We need to analyze the set of points where orbits of the action accumulate, i.e., $\Lambda_H=\bigcap_{k} \bigcup_{N(I)=k}  h_I(B)$. For each $I$ the set $ B\setminus h_I(B)$ is conformally equivalent to a unique annulus
$$A_{I}=\{z\in {\bf C}: r_I<|z|<1\},$$
of modulus
$$m(B\setminus h_I(B))=-\log(r_I).$$

Set $0<c<\min \{m(B\setminus h_1(B)), m(B\setminus h_2(B)) \}$. We claim that $m(B\setminus h_I(B))>c\cdot N$. Indeed, this can be proved by induction on the length. For $N=1$ it is obvious. Suppose $I'$ has length $N-1$ and satisfies  $m(B\setminus h_{I'}(B))>c\cdot(N-1)$. Let $I=I'i_{N}$. Then by using superadditivity of  moduli (see \cite{ahl}) we have that
$$m(B\setminus h_I(B))\ge
m(B\setminus h_{I'}(B)) + m(h_{I'}(B)\setminus h_I(B)).$$
On the other hand
 $$m(h_{I'}(B)\setminus h_I(B)) > m(B\setminus h_{i_{N}}(B))>c$$
 and therefore,  $m(B\setminus h_I(B))>c\cdot N$.

Hence for an infinite sequence, $I=(i_n)_{n\in\mathbb{N}}\in\{1,2\}^{\mathbb{N}}$, if we consider its truncation $I_{N}$ of length $N$ we have that $\lim_{N\rightarrow\infty}m(B\setminus{h_{I_{N}}(B))}=\infty$ and there exists a unique point $k_I\in \bigcap_{n\in\mathbb{N}}h_{i_1}\circ\cdots\circ h_{i_n}(B)\in B$. It is then clear that the limit set of $H$,
$$\Lambda_H=\bigcup_{I\in\{1,2\}^{\mathbb{N}}}k_I$$
is a totally disconnected, perfect set in $B$, hence a Cantor set.
\end{proof}


We now proceed to the proof of Theorem \ref{thm:cantor}. Suppose that we
could find an algebraic curve $C$ and a holonomy map $h : B\setminus \Lambda_H \rightarrow C$ which is invariant by the action of the semi-group $H$. Then, the map $h$ would not extend continuously to any point of $\Lambda_H$, and Theorem~\ref{thm:cantor} would follow.
While this seems to be asking too much,
a slight weakening of this property, by restricting $h$ to an $H$-invariant tree in $ B\setminus \Lambda_H$,
is sufficient for our purposes.  The tree structure is already implicit in the multi-indices used
to parametrize points of  $\Lambda_H$ in the proof of Lemma~\ref{lem:limit}.
We proceed to  constructions of such  $h$ based on  suspensions  using  semigroup actions, and
 containing trees as
diagonals.  We hope this also provides some understanding of the  link to our results
 in the previous sections.


Consider the bouquet of two oriented circles $Q= S_1 \vee S_2$, with base point $*$, and let $\pi_1 (Q)^+ \subset \pi_1(Q)$ be the positive semi-group generated by the classes $[S_i]$, $i=1,2$ in the fundamental group. Denote by $T$ the Cayley graph of $\pi_1(Q)^+$ associated to the set of generators $[S_1], [S_2]$; this is a dyadic rooted tree and $\pi_1(Q)^+$ acts on $T$ by the restriction of the $\pi_1(Q)$ action on its Cayley graph. We will denote by $\rho : \pi_1(Q) ^+ \rightarrow H$ the representation defined by sending $[S_i]$ to $h_i$, which clearly gives a semigroup isomorphism.  The next proposition provides in particular the tree in $ B\setminus \Lambda_H$, on which our suspension analogy will be built.
\begin{proposition} \label{p:F-homotopy}
There exist two continuous maps $ F_i : T \rightarrow \mathbb P^2$, $i=1,2$ such that
\begin{enumerate}
\item $F_0$ takes values in $ U_H= B\setminus \Lambda_H$ and is $\rho$--equivariant,
\item \label{it:algcur} $F_1$ takes values in a smooth algebraic curve $C$ of $\mathbb P^2$ and is  $\pi_1(Q)^+$--invariant, thus giving an  immersed image of $Q$,
\item $F_0$ and $F_1$ are $\mathcal F$-homotopic.
\end{enumerate}
\end{proposition}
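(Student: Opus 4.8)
The plan is to build the maps $F_0$ and $F_1$ tree-level by tree-level, propagating the holonomy tubes $\overline{\Gamma^1}, \overline{\Gamma^2}$ from Lemma~\ref{lem:schottky} along the edges of $T$, and then to assemble the $\mathcal F$-homotopy by concatenating these tubes in the order dictated by the rooted-tree structure. First I would fix the base vertex $e\in T$ (the identity of $\pi_1(Q)^+$) and declare $F_0(e)$ to be the fixed point $z=0\in B$ of $h_1$ (which, by construction, lies in $\overline{B}\setminus\Lambda_H$ since $0\in h_1(\overline B)$ and the $h_i$ are injective with disjoint images); set $F_1(e)$ to be the corresponding point on $C$, namely the endpoint of the leafwise path $\Gamma'(\cdot,0)$. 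For a general vertex $v=[S_{i_1}]\cdots[S_{i_k}]$, define $F_0(v)=h_{i_1}\circ\cdots\circ h_{i_k}(0)\in U_H$; equivariance of $F_0$ under $\rho$ is then immediate from the definition, and the fact that $F_0$ avoids $\Lambda_H$ follows because each $F_0(v)$ lies in the interior of one of the cylinders $h_{i_1}\circ\cdots\circ h_{i_k}(\overline B)$, which meets $\Lambda_H$ only in its own limit piece — but $0$ is an interior point, not a limit point. To extend $F_0$ over the edges of $T$, I would use that each edge of $T$ joins $v$ to $v[S_i]$, and along this edge interpolate inside $B$ between $F_0(v)$ and $F_0(v[S_i]) = h_v(h_i(0))$ by pushing forward, via the injective holomorphic map $h_v := h_{i_1}\circ\cdots\circ h_{i_k}$, a fixed path in $\overline B$ from $0$ to $h_i(0)$; continuity of $F_0$ on all of $T$ follows since $T$ carries the direct-limit topology of its finite subtrees and each $h_v$ is continuous.

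Next I would construct the $\mathcal F$-homotopy $\Gamma$ realizing item~(3), which simultaneously forces the definition of $F_1$. Over the edge from $v$ to $v[S_i]$, the image path $F_0|_{\text{edge}}$ lies in $B$; I need a continuous family of leafwise paths starting along this edge and ending along a corresponding path in $C$. The building block is the tube $\overline{\Gamma^i}$ from $\overline B$ to $h_i(\overline B)$, whose leafwise paths I propagate forward by following, in the family, the leafwise path given by iterating the tubes $\overline{\Gamma^{i_1}}, \dots, \overline{\Gamma^{i_k}}$ corresponding to $v$. Concretely, for a point $p$ on the edge, $p = h_v(p')$ with $p'\in\overline B$ on the fixed interpolating path; the leafwise path $\Gamma^p$ is the concatenation $\overline{\Gamma^{i_1}}(\cdot, p') * \overline{\Gamma^{i_2}}(\cdot, h_{i_1}^{-1}\text{-image}) * \cdots$, i.e. exactly the composition of tubes encoding the word $[S_{i_1}]\cdots[S_{i_k}]$, applied at the seed point $p'$. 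This is continuous in $p$ by continuity of each tube and of the $h_v$; at the two endpoints of the edge it reduces to the leafwise paths used to define $F_0$ and $F_1$ at $v$ and $v[S_i]$. Then $F_1(v)$ is by definition the endpoint $\Gamma^{F_0(v)}(1) \in C$, where $C$ is the fixed smooth algebraic curve containing the range of all the holonomy maps $h_1, h_2$ (it exists because, as produced in Lemma~\ref{lem:schottky}, both $h_i$ have range inside the single disc $B\subset L$, and we may take $C=L$, or — if one prefers a curve disjoint from the singularities used for $h_2$ — a generic small perturbation of $L$ by the maximum-principle transversality of~\cite{CLS}). The crucial point is that $F_1$ is $\pi_1(Q)^+$-\emph{invariant}, not merely equivariant: this is exactly the content of the two holonomy tubes having the \emph{same} associated target, namely that $\overline{\Gamma^i}$ is a tube from $\overline B$ to a subset of $B$, so applying the generator $[S_i]$ to a vertex $v$ changes the seed point $F_0(v) \mapsto h_i F_0(v)$ but leaves the leaf — hence the endpoint in $C$ — unchanged. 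More precisely, $F_1(v[S_i])$ and $F_1(v)$ are joined by the leafwise loop coming from the tube $\overline{\Gamma^i}$ read backwards then forwards, so they are the \emph{same} point of $C$; iterating, $F_1$ descends to the quotient $T/\pi_1(Q)^+ = Q$, giving the immersed bouquet asserted in item~(2).

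The main obstacle I expect is the bookkeeping of basepoints and homotopy classes in the concatenation of tubes: because a holonomy map does not determine the homotopy class of its leafwise path (the subtlety flagged in the paragraph defining holonomy tubes), one must check that the tube $\overline{\Gamma^2} = (\Gamma')^{*n}*\Gamma''$ chosen in Lemma~\ref{lem:schottky} is compatible, \emph{as a tube}, with $\overline{\Gamma^1}=\Gamma'$, so that arbitrary words in $\overline{\Gamma^1}, \overline{\Gamma^2}$ concatenate to genuine continuous families of leafwise paths — i.e., one needs that the endpoint of each tube lands in the domain disc $\overline B$ (or $U$) of the next tube, which is precisely the nesting $h_\gamma^n(B)\Subset U\cap\psi^{-1}(V)$ arranged in the lemma, together with $h_i(\overline B)\subset B$. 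Once this compatibility is in hand, continuity of $F_0$, $F_1$ and $\Gamma$ is a direct-limit argument, equivariance/invariance are formal, and the fact that $F_0$ avoids $\Lambda_H$ is the interior-point observation above; so the proof reduces to carefully stating the tube-concatenation lemma and verifying the nesting inclusions, which are already guaranteed by the construction in Lemma~\ref{lem:schottky}.
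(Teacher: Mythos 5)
Your high-level plan---propagate the tubes from Lemma~\ref{lem:schottky} along the edges of $T$ and concatenate---matches the paper's starting point, but two concrete errors defeat the argument. First, the basepoint: you put $F_0(e)$ at the fixed point $0$ of $h_1$ and claim $0\notin\Lambda_H$, but $h_1(0)=0$ gives $0\in h_1^n(\overline B)$ for every $n$, so $0=k_{(1,1,1,\dots)}\in\Lambda_H$. Starting the tree at a contracting fixed point of one of the generators directly violates $F_0(T)\subset U_H$ (and also degenerates the $h_1$-edges of $F_0(T)$). The paper fixes an arbitrary $p\in U_H$ precisely for this reason.

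Second, and fatally, your diagonal collapses. Concatenating the contracting tubes in the order dictated by $v$ and reading them backward from $F_0(v)=h_v(p')$ necessarily ends at the seed $p'$, so $F_1$ on vertices is constant and $F_1$ over an edge just reproduces the interpolating arc inside $B$ --- not an immersed bouquet. The paper's $F_1$ is something quite different: one builds a two-parameter family $\gamma^i_t(s)$ of leafwise paths with one free transverse endpoint, and then sets $\sigma_i(t)=\gamma^i_t(t)$, reading off the family along the diagonal of the $(s,t)$-square. That $\sigma_i$ is a genuine loop in $\mathbb P^2$ based at $p$ that travels out into the leaf and returns, so $F_1(T)$ is a bouquet of two circles in the leaf, not a subset of $B$. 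Because of this, the image of $F_1$ is \emph{not} a priori contained in any algebraic curve --- which is why Lemma~\ref{lem:approx} (Fourier truncation plus a transversality argument) is needed to replace $\sigma_1\vee\sigma_2$ by a nearby bouquet $\hat\sigma_1\vee\hat\sigma_2$ inside a smooth algebraic curve and to perturb the families $\gamma^i$ compatibly; your proposal skips this entirely, and can only do so because the diagonal has degenerated. Worse, with $F_1\subset B$ the singularity argument for Theorem~\ref{thm:cantor} evaporates: a path in $F_0(T)$ tending to a point of $\Lambda_H$ would then have image under $h$ converging inside $B$, rather than winding infinitely around the two circles of the bouquet. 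Finally, the genuine continuity issue at vertices of $T$ is not the tube nesting (which Lemma~\ref{lem:schottky} supplies, as you note) but the jump by one in the number of tubes followed when crossing a vertex; the paper repairs this by grafting $\gamma^i_t$ onto the concatenated geodesic tubes, using that $\gamma^i_1$ coincides with the geodesic tube of $h_i^{-1}$, and your proposal has no analogue of the families $\gamma^i$ to effect this repair.
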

Assuming this proposition, let us prove Theorem \ref{thm:cantor}. Denote by $h: U\subset B\rightarrow V\subset C$ a germ of holonomy map defined by the $\mathcal F$-homotopy between $F_0$ and $F_1$. Remark that by equivariance each point $q\in \Lambda_H$ can be approached by an infinite path in $F_0 (T)$ that, apart from continuity, has very little regularity at $q$ (it is essentially log--fractal, i.e. fractal wrt the $log(r)$ transformation of polar to cylindrical coordinates). Nevertheless, along such a path, the holonomy $h$ will not have any limit, since it is a curve winding infinitely many times around the two circles of the bouquet $F_1(T)$.  (Recall the discussion of asymptotic values following proposition~\ref{prop:singofgerms}.) Thus $\Lambda_H$ is a subset of the set of singularities for the topological continuation of $h$.

It remains to prove Proposition~\ref{p:F-homotopy}. We will first produce a pair $F_0,F_1$ satisfying all the desired properties except that $F_1$ does not take values in an algebraic curve.  A basic approximation method will allow us to correct this easily.

 To guarantee the continuity of some of the constructions we will repeatedly use that, by work of Lins-Neto in~\cite{Lins}, for the class of foliations in Theorem~\ref{thm:cantor}, the leaves of $\mathcal{F}$ are of hyperbolic type and the unique complete metric of constant negative curvature in each leaf is continuous in $\mathbb P^2\setminus \mathrm{Sing}(\mathcal{F})$ (the two conclusions hold as soon as the foliation has degree greater than~$2$ and non-degenerate singularities). This allows, given a leafwise path, to homotope it fixing endpoints, to the unique geodesic representative in its class.

 Let $p$ be a point of $U_H=B \setminus \Lambda_H$. The basic technical device is a pair $\gamma= (\gamma^1, \gamma^2)$ of continuous families $\gamma^i = \{ \gamma_t ^i(s) \} _{t\in [0,1]}$ of $s$--leafwise paths satisfying


\begin{itemize}
\item $\gamma_0^i $ is the constant path at $p$,
\item $\gamma_t^i (0) \in U_H$ and $\gamma_1 ^i (0) = h_i (p)$
\item $\gamma_1 ^i$ is the leafwise geodesic path starting at $h_i(p)$ and ending at $p$, and carrying the holonomy map $h_i^{-1}$.
\end{itemize}
To ensure existence of such families, for $i=1,2$, we add the special condition;
$$\{\gamma_t^i (1) :t\in [0,1]\} \subset \{\gamma_t^i (0) :t\in [0,1]\} \cup \{\gamma_1^i (s) :s\in [0,1]\},$$
i.e.
start with the constant path at $p$, and begin moving continuously to the constant path at $h_i(p)$ in the space of $U_H $-valued constant paths. Then, construct a homotopy (with one free extremity) between the constant path at $h_i(p)$ and the leafwise geodesic path going from $h_i(p)$ to $p$.


\begin{figure}

\centering

\includegraphics[width=12cm]{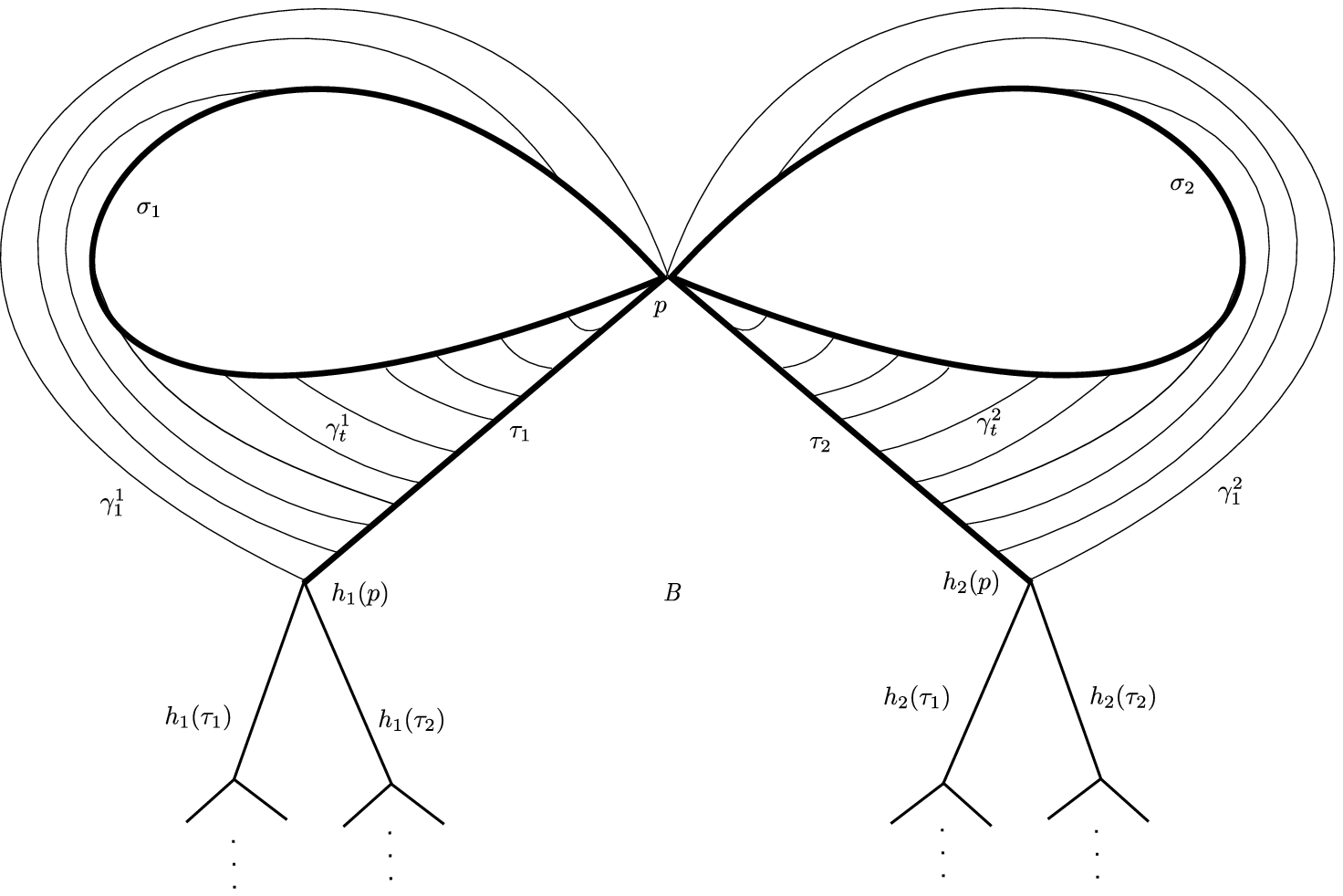}

\caption{The $\mathcal{F}$-homotopy between $\tau_i$ and $\sigma_i$ is realized by the finer paths}\label{fig:papillon}

\end{figure}

Given $\gamma$, we now indicate how to construct associated continuous maps $F_j ^{\gamma} : T \rightarrow \mathbb P^2$, for $j=0,1$. For $i=1,2$, denote by $\tau_i: [0,1] \rightarrow U_H$ the path $\tau_i (t) = \gamma_t ^i (0)$ going from $p$ to $h_i(p)$.
We extend the $\tau_i$'s as a continuous $\rho$-equivariant map $F_0^{\gamma} : T \rightarrow B$. We refer to its image as the tree generated by $\tau_1\vee\tau_2$. Notice that for each point $q\in F_0^{\gamma}(T)$ there exist unique $i\in\{1,2\}$, $t\in[0,1)$ and a finite composition $h_q$ of $h_1$ and $h_2$'s of maximal length such that
\begin{equation}
  \label{eq:paramtree}
h_q(\tau_i(t))=q
\end{equation}

 Parametrize $Q$ by $[0,1] \cup [0,1]$ with all endpoints identified.
Now, define for $i=1,2$, $\sigma_i : [0,1] \rightarrow \mathbb P^2$ as the map $\sigma_i (t) = \gamma_t ^i (t)$. Obviously $\sigma_i (0) = \sigma_i (1) = p$. Thus, we can define a map $\sigma_1 \vee \sigma_2$ from $Q$ to $\mathbb P^2$, which will be referred to as the diagonal. Then we obtain a $\pi_1(Q)^+$-invariant continuous map $F_1^{\gamma} : T \rightarrow \mathbb P^2$ by composing $\sigma_1 \vee \sigma_2$ with the
restriction  to $T$ of the universal covering map of $Q$.

\begin{figure}

\centering

\includegraphics[width=12cm]{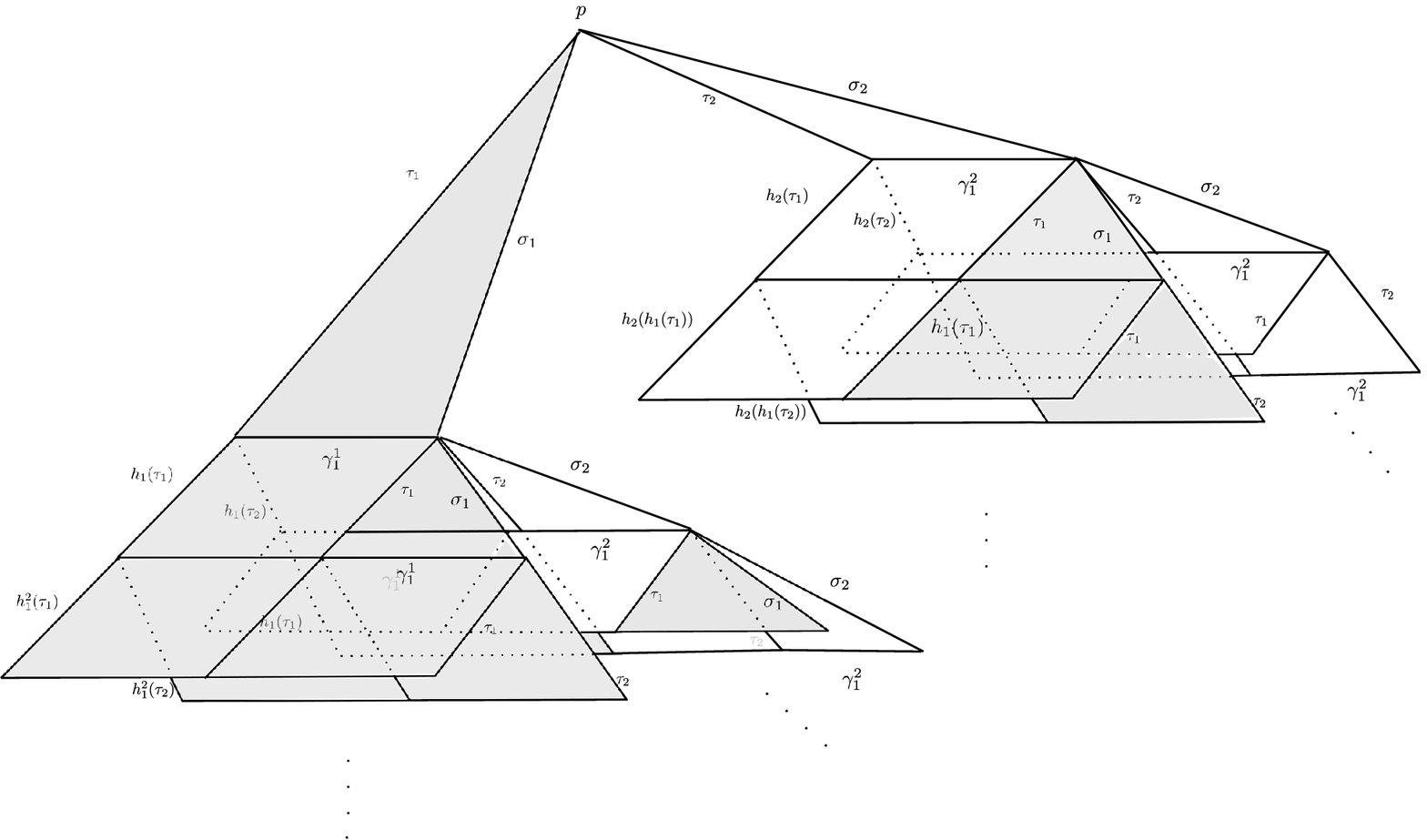}

\caption{The $\mathcal{F}$-homotopy between $F_0^{\gamma}$ and $F_1^{\gamma}$. Shadowed squares correspond to follow $\Gamma^1$, the $h_1^{-1}$-tube. White squares correspond to follow $\Gamma^2$, the $h_2^{-1}$-tube. Shadowed triangles follow $\gamma^1 $, the leafwise homotopy joining $\tau_1$ with the diagonal $\sigma_1$. White triangles follow $\gamma^2$,  the leafwise homotopy joining $\tau_2$ with the diagonal $\sigma_2$.}\label{fig:escalier}

\end{figure}

 The family $\gamma$ provides an $\mathcal F$-homotopy between generators $\tau_1\vee\tau_2$ of the tree and the generators $\sigma _ 1\vee\sigma_2$ of the diagonal (see Figure \ref{fig:papillon}). We will now extend this homotopy and get an $\mathcal F$-homotopy between $F_0 ^{\gamma}$ and $F_1^{\gamma}$. The basic idea is that, by using the relation (\ref{eq:paramtree}), any point $q$ in the tree can be joined by a leafwise path in the tube of $h_q^{-1}$ (a concatenation of the \emph{geodesic} tubes of $h_1^{-1}$ and $h_2^{-1}$) to its corresponding point $\tau_i(t)$. Remark that we follow the tubes in the expanding direction of the holonomy. By this procedure we obtain a family of leafwise paths parametrized by $T$ which is continuous only outside the vertices of $T$. At the vertices of the tree the number of tubes of $h_1^{-1}$ or $h_2^{-1}$ that we follow changes by $1$ because of the conditions defining $h_q$. However, if for each $q$ in the tree instead of considering the leafwise path following the expanding tubes until $\tau_i(t)$ we go a little further by concatenating the path $\gamma_t^i$, the family will also be continuous at the vertices, since $\gamma_1^i$ and the leafwise path in the tube of $h_i^{-1}$ are both the same geodesic. The endpoints of the new family will be in the diagonal instead of being in $B$.

 To avoid cumbersome notations of homotopy parametrizations we have preferred to represent the  homotopy between $F_0^{\gamma}$ and $F_1^{\gamma}$ by a figure. For each $i=1,2$ consider the holonomy tube $\Gamma^i:[0,1] \times h_i(B)\rightarrow \mathbb P^2$ whose associated holonomy map is $h_i^{-1}$, given by Lemma~\ref{lem:schottky}. To get continuity it is important here to use the leafwise geodesics for the $\Gamma^i (.,q)$'s. We find those tubes  and the family $\gamma$ in Figure (\ref{fig:escalier}) concatenated in the adequate way to produce the desired $\mathcal F$-homotopy between $F_0^{\gamma}$ and $F_1^{\gamma}$.
 Formally, the existence proof uses a straightforward induction on the heights of edges (i.e. lengths of multiindices) in the tree.

 Some remarks are in order at this point:

 First, the choice of the pair $\gamma=(\gamma^1,\gamma^2)$ could have been done by using other more differential--topological conditions instead of using the metric arguments that apply only under the hypothesis that all leaves are of hyperbolic type. These other choices allow to guarantee the continuity of the $\mathcal{F}$-homotopies to more general foliations.

 Second, let us relate the diagonal construction of section \ref{s:riccati} in the Riccati context to the present context. In the former case the diagonal can be interpreted in the covering space, which is a product, as the graph of a holomorphic map from a leaf to a transversal. The asymptotical values of this map provide singular points for the holonomy germ determined by a germ of its inverse. In other words there are infinite paths in the diagonal whose projection to the fibre converge, while the projection onto the leaf goes towards infinity. In the present case the diagonal is represented by the map $F_1^{\gamma}:T\rightarrow \mathbb{P}^2$. The basic point is that the diagonal can be projected to the tree $F_0^{\gamma}(T)$ in the transversal $B$ and \emph{also} to a tree in the leaf of $\mathcal{F}$ through $p$, by using the holonomy tubes of $h_1$ and $h_2$ to define lifts. If we take an infinite path in $T$ towards a point in the boundary, its projection from the diagonal to $B$ converges, while its projection onto the leaf diverges.

The proof is complete except that $F_1^{\gamma}$ constructed above
does not necessarily take values in an algebraic curve of $\mathbb P^2$, so we now modify the tubes $\gamma$ to obtain this latter condition. For this purpose, we will construct in what follows a perturbation $\hat{\sigma_1} \vee \hat{\sigma_2}$ of $\sigma_1 \vee \sigma_2$ in the uniform topology, based at the point $p$, so that its image is contained in an algebraic curve. Then
the construction of a pair $\hat{\gamma}=(\hat{\gamma}_1, \hat{\gamma}_2)$ whose corresponding diagonal is $\hat{\sigma}_1 \vee \hat{\sigma}_2$ follows by a simple perturbation of the construction above.
One should note that the  perturbation $\hat{\sigma}_1 \vee \hat{\sigma}_2$
determines uniquely  a perturbation of $\tau_i$
compatible with the  $\gamma^i$ construction, which then  makes the continuous dependence of
the  $\gamma^i $ quite obvious.

\begin{lemma}\label{lem:approx}

There exists a continuous mapping $\hat{\sigma}_1 \vee \hat{\sigma}_2:Q\rightarrow \mathbb P^2$ arbitrarily close to $\sigma_1 \vee \sigma_2$ in the uniform topology, whose image is contained in a smooth algebraic curve of $\mathbb P^2$, and both maps share the same basepoint $p\in\mathbb P^2$.

\end{lemma}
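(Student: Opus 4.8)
The plan is to approximate the figure-eight curve $\sigma_1\vee\sigma_2$ (a loop in $\mathbb{P}^2$ based at $p$, made of two leafwise ``diagonal'' arcs) by a nearby figure-eight whose image lies on a single smooth algebraic curve. First I would note that $\sigma_1\vee\sigma_2$ is a compact subset of $\mathbb{P}^2\setminus\mathrm{Sing}(\mathcal{F})$; since $p$ is not a singular point and the two arcs $\sigma_i$ are embedded away from $p$ (they meet only at the basepoint, which can be arranged after a small preliminary perturbation transverse to the leaves), the union is an immersed figure-eight, i.e.\ a one-complex with a single node at $p$ and no other self-intersections. The goal is then purely a statement about approximating such a real one-dimensional object in $\mathbb{P}^2$ by (a piece of) an algebraic curve.

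The key steps are as follows. First I would thicken the picture: choose disjoint (except near $p$) tubular neighbourhoods of the two arcs and realise each arc, up to a $C^0$-small perturbation, as an embedded real-analytic arc; a real-analytic arc in $\mathbb{C}^2$ lies in a (local) complex-analytic curve, so after covering by finitely many charts one obtains a compact complex curve-with-boundary $\Sigma_0$ containing a curve isotopic to $\sigma_1\vee\sigma_2$. Second, embed $\Sigma_0$ into a compact Riemann surface and use a projective-embedding / general-position argument: by Bertini-type reasoning (or by the classical fact that a compact piece of a holomorphic curve in $\mathbb{P}^2$ can be uniformly approximated by algebraic curves, e.g.\ by approximating its defining functions by polynomials and invoking the implicit function theorem away from the finitely many self-intersection and critical points), one produces a smooth algebraic curve $C\subset\mathbb{P}^2$ passing uniformly close to $\Sigma_0$. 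Third, I would pull the perturbed figure-eight onto $C$: since $C$ is $C^0$-close to the original arcs and transverse to the relevant structures, there is a projection (along small transverse discs, using the normal bundle of $C$ or simply a retraction of a neighbourhood of $C$) carrying $\sigma_1\vee\sigma_2$ to a nearby figure-eight $\hat{\sigma}_1\vee\hat{\sigma}_2$ whose image lies in $C$; arrange that the node is mapped to a fixed point $\hat p\in C$, and finally (to keep the \emph{same} basepoint $p$ as demanded) note that by the same approximation argument we may require $C$ to actually pass through $p$, since passing through one prescribed smooth point is one linear condition on the coefficients of the defining polynomial and is compatible with smoothness and with $C^0$-proximity. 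This yields $\hat{\sigma}_1\vee\hat{\sigma}_2$, based at $p$, arbitrarily $C^0$-close to $\sigma_1\vee\sigma_2$, with image in a smooth algebraic curve.

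The main obstacle I anticipate is the algebraic approximation in the second step: one is approximating a real one-dimensional, merely continuous (a priori only $C^0$) configuration of leafwise arcs by an honest \emph{smooth algebraic} curve, and one must do so while controlling three things simultaneously — uniform $C^0$-closeness, smoothness of the approximating curve, and the pointwise constraint at $p$. The continuity issue is handled by first replacing $\sigma_1\vee\sigma_2$ by a real-analytic (indeed, after the holomorphic thickening, complex-analytic) approximation, which is harmless since we only need $C^0$-proximity in the end; smoothness and the incidence at $p$ are then open/codimension-one conditions on polynomial coefficients, so a generic polynomial of large degree uniformly close to a local defining function of $\Sigma_0$ works, and the implicit function theorem upgrades this to a genuine curve close to $\Sigma_0$ away from its (finitely many) critical and nodal points, which can be avoided by the preliminary choices. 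Everything else — disjointness of the two branches, transversality, the retraction onto $C$ — is routine once this approximation is in hand.
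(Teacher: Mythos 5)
The overall strategy — perturb the two leafwise loops so that each lands on an algebraic curve, then combine into a single smooth algebraic curve — is the same as the paper's, but your execution diverges at the two places where the paper is most explicit, and in both you have gaps.

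\emph{The approximation step.} You propose to realise each perturbed arc as a real-analytic arc, complexify it to a local holomorphic curve, patch these into a curve-with-boundary $\Sigma_0$, and then invoke a ``classical fact that a compact piece of a holomorphic curve in $\mathbb{P}^2$ can be uniformly approximated by algebraic curves.'' This last step is not standard and is not justified: a compact Riemann surface with boundary embedded in $\mathbb{C}^2$ need not be a single graph, polynomial approximation of local defining functions in separate charts does not produce the zero locus of a single global polynomial, and making it work requires Oka--Weil-type approximation on a polynomially convex neighbourhood, which you would have to arrange. The paper sidesteps all of this with a completely elementary device: parametrize each loop by $S^1$, perturb to $C^\infty$, truncate the Fourier series, and observe that the resulting trigonometric polynomial map is the restriction to $S^1\subset\mathbb{P}^1$ of a \emph{rational} map $\mathbb{P}^1\to\mathbb{P}^2$, whose image is automatically a (rational) algebraic curve $C_i=\{f_i=0\}$ through $p$. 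No complexification or approximation theorem is needed.

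\emph{Combining into a single smooth curve.} You propose to impose the incidence condition $p\in C$ directly (``one linear condition on the coefficients''). But the object you are approximating is the nodal configuration $C_1\cup C_2$, with two transverse branches at $p$. Any \emph{smooth} $C$ through $p$ has a single branch and a single tangent there, so it cannot be uniformly close to both branches in the strong sense; moreover ``smoothness near $p$'' and ``uniform closeness to the two transverse branches'' are in genuine tension, and you do not verify that the tubular-neighbourhood retraction you invoke remains $C^0$-close near $p$ (it does, but it requires a short computation, and the retracted arcs fold at $p$, which is harmless for the statement but must be noticed). The paper instead takes $\tilde{C}=\{f_1f_2=\delta\}$, the classical smoothing of the node, which is smooth for generic small $\delta$; this curve does \emph{not} pass through $p$, so the two loops now live on $\tilde{C}$ with common basepoint $q$ near $p$, and the paper corrects this by one final small automorphism of $\mathbb{P}^2$ sending $q$ to $p$. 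Your shortcut of forcing $p\in C$ from the start could be made to work but is more delicate than you indicate, and as written the claim is unsupported.

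In summary: the proposal has the right shape, but the algebraic-approximation step is a genuine gap (the paper's Fourier-truncation trick is exactly the piece of magic that makes the lemma cheap), and the treatment of the node at $p$ skips the smoothing-plus-automorphism argument the paper relies on.
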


\begin{proof} Let $\varepsilon >0$. For $i=1,2$ choose $\tilde{\sigma} _i:S^1 \rightarrow {\mathbb C} ^2 \subset \mathbb P^2$ a $C^{\infty}$ map in a $\varepsilon /4$-neighbourhood of $\sigma_i$ satisfying $\sigma_i(1)=\tilde{\sigma}_i(1)=p$. We can assume that $\tilde{\sigma}'_1(1),\tilde{\sigma}'_2(1)$ are $\mathbb C$-linearly independent. The Fourier series of $\tilde{\sigma} _i$ (resp. the series' first derivative) converges uniformly to $\tilde{\sigma}_i$ (resp. $\tilde{\sigma}_i'$) so by truncating the series for some sufficiently big $N>0$ and applying a small translation we get a finite family $\{a_n^i,b_n^i\in\mathbb C :|n|<N\}\subset\mathbb C$ and a map

$$\bar{\sigma}_i(e^{2i\pi t})=\left(\sum_{|n|<N}a_n^i (e^{2i\pi t})^n,\sum_{|n|<N}b_n^i (e^{2i\pi t})^n \right),$$
which is in a $\varepsilon/2$-neighbourhood of $\sigma_i$ and satisfies $\bar{\sigma}_i(1)=p$. The map $\bar{\sigma}_i$ is the restriction of a rational map $\mathbb P^1\rightarrow \mathbb P^2$ to $S^1\subset \mathbb P^1$ so its image is contained in an algebraic curve $C_i=\{f_i=0\}\subset \mathbb P^2$ that contains $p$. The non-collinearity of $\tilde{\sigma}'_1(1)$ and $\tilde{\sigma}'_2(1)$ guarantees that $p$ is a normal crossing point of the reducible curve $C_1\cup C_2=\{f_1 f_2=0\}$. For a generic and sufficiently small $\delta\in \mathbb C$, the curve $\tilde{C}=\{f_1 f_2=\delta\}$ is smooth and $\varepsilon/2$-close to $C_1\cup C_2$. In a neighbourhood of $p$, $\tilde{C}$ is arc connected (it is biholomorphic to a disc with a finite number of holes) so we can find a point $q\in\tilde{C}$ which is $\varepsilon/2$-close to $p$ and two loops $\breve{\sigma}_1, \breve{\sigma}_2$ in $\tilde{C}$ with basepoint at $q$ which are $\varepsilon/2$-close to $\sigma_1$ and $\sigma_2$ respectively. By choosing an automorphism $\varphi$ of $\mathbb P^2$ which is $\varepsilon/4$-close to the identity sending $q$ to $p$ we have that the curve $C:=\varphi(\tilde{C})$ and the loops $\hat{\sigma}_i:=\varphi\circ\breve{\sigma}_i$ satisfy the desired properties.
\end{proof}

\subsection{Proof of Theorem~\ref{thm:curveofsing}}

A careful analysis of the proof of Theorem \ref{thm:cantor} shows that if $B$ is a transverse disc to a holomorphic singular foliation $\mathcal{F}$ of $\mathbb P^2$ having only hyperbolic leaves and $h_1,\ldots,h_l:B\rightarrow B$ are holonomy maps such that
\begin{itemize}
  \item there exists a constant $c<1$ satisfying $|h_i'(x)|<c$ for all $x\in B$ and $i\in\{1,\ldots,l\}$, and
  \item each $h_i$ carries a holonomy tube
\end{itemize}
then we can find an algebraic curve $C$ and a germ of holonomy of $\mathcal{F}$ from $B$ to $C$ for which every point in the limit set of the action of the semigroup $H$ generated by $h_1,\ldots,h_l$ on $B$, namely
$$\Lambda_H=\bigcup_{(i_n)\in\{1,\ldots,l\}^{\mathbb{N}}}\left( \bigcap_{n\geq 0}h_{i_1}\circ\cdots\circ h_{i_n}(B)\right),$$ is a singularity for its analytic continuation. In fact we only need to remark that the leafwise Poincar\'e metric is continuous (\cite{Lins}), adapt Proposition~\ref{p:F-homotopy} by taking $T$ as an $l$-adic tree instead of a dyadic tree, $Q$ as a bouquet of $l$ circles, and use an adaptation of Lemma \ref{lem:approx} to guarantee that arbitrary close to a continuous map $Q\rightarrow\mathbb P^2$ there is another such map with image contained in an algebraic curve.

The plane algebraic foliations that will satisfy Theorem~\ref{thm:curveofsing} are examples that we learned from Loray and Rebelo's paper~\cite{LR}. To find the admissible germ for such a foliation, we will construct a semi-group in the holonomy pseudo-group, defined by some holomorphic maps carried by holonomy tubes, whose limit set has non empty interior, as above. For this purpose, we review Loray/Rebelo's techniques.

The important property satisfied by these foliations is that their pseudo-group is \textit{non discrete}: namely, there exist transverse discs $B\subset B'$, with $B$ relatively compact in $B'$, and a sequence of holonomy maps $g_n : B \rightarrow B'$ which are different from the identity for every integer $n$, but nevertheless converge to the identity uniformly in $B$. The main ingredient to ensure that a foliation has a non discrete pseudo-group is~\cite[Lemma 3.3]{LR}:

\begin{lemma} \label{l:LorayRebelo} There is a constant $\varepsilon_0>0$ such that the following holds. Let $f: \mathbb D\rightarrow \mathbb D$ be a map defined by $f(z) = \lambda z$ for some complex number $\lambda$ of modulus $|\lambda | < 1$ and such that $|\lambda - 1| \leq \varepsilon_0$. Let $g: \mathbb D \rightarrow \mathbb C$ be an injective holomorphic map, $\varepsilon_0$-close to the identity, i.e. $\sup _{z\in \mathbb D} |g(z)-z| \leq \varepsilon_0$. Then, there exists an integer $N\geq 1$ such that the holomorphic maps defined by $g_0= g$, and $g_{k+1} = f^{-N} \circ [f,g_k]\circ f^{N}$ for every integer $k\in \mathbb N$ have domain of definition at least the disc $\mathbb D_{1/3}$, and converge uniformly to the identity when $k$ tends to infinity. Moreover, if $g$ is not an affine map, then $g_k$ is different from the identity for every $k$.
 \end{lemma}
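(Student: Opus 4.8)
The plan is to run a KAM--style iteration, tracking along the sequence $(g_k)$ two small quantities: the $C^0$--size $\delta_k:=\sup|g_k-\id|$ over a fixed disc on which $g_k$ is defined, and the defect at the origin $e_k:=|g_k(0)|$ --- in general the $g_k$ do not fix $0$, which is exactly why $g$ is only assumed \emph{affine}, not linear, in the non--triviality clause. Two elementary tools drive the argument. First, the conjugation identity $(f^{-N}\circ h\circ f^{N})(z)-z=\lambda^{-N}\bigl(h(\lambda^{N}z)-\lambda^{N}z\bigr)$, valid for any germ $h$: as $|\lambda|<1$, it shows that conjugating by $f^{N}$ only \emph{enlarges} domains, by the factor $|\lambda|^{-N}>1$, and that on the Taylor coefficients at $0$ it multiplies the coefficient of $z^{n}$ by $\lambda^{N(n-1)}$; so it damps every coefficient of order $\ge 2$ by at least $|\lambda|^{N}$, leaves the order--$1$ coefficient fixed, and amplifies the order--$0$ coefficient by $|\lambda|^{-N}$. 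Second, a commutator estimate: writing $\psi:=h-\id$, small on $\mathbb D$, one checks by a direct computation (or via the flow picture, $f$ being the time--one map of $(\log\lambda)\,z\,\partial_z$) that $[f,h]=f\circ h\circ f^{-1}\circ h^{-1}$ is defined and injective on $\mathbb D_{|\lambda|-2\|\psi\|}$, that $\Psi:=[f,h]-\id$ is of the same order as $\psi$ on $\mathbb D_{1/2}$, but that its two lowest Taylor coefficients at $0$ are of \emph{second} order in the small data: $\Psi(0)=[f,h](0)=(\lambda-1)\,h(0)+O(\|\psi\|^{2})$ and $\Psi'(0)=O(\|\psi\|^{2})$. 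The crucial feature is the factor $(\lambda-1)$ multiplying $h(0)$.

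With these two facts I would fix an absolute constant $C_{0}$, large, and for each $\lambda$ with $|\lambda|<1$, $|\lambda-1|\le\varepsilon_{0}$, take $N=N(\lambda)$ to be the least integer with $|\lambda|^{N}\le e^{-C_{0}}$, so $1<|\lambda|^{-N}\le 2e^{C_{0}}$. Then I would show, by induction on $k$, that each $g_{k}$ is defined and injective on a disc of radius $\ge 3$ (in particular on $\mathbb D_{1/3}$) --- automatic from the enlargement property above, the domain of $[f,g_{k}]$ staying $\ge \tfrac12$ --- with $\delta_{k},e_{k}\le\varepsilon_{0}$. Indeed, applying the conjugation identity to $\Psi_{k}:=[f,g_{k}]-\id$ and splitting $\Psi_{k}=\Psi_{k}(0)+\Psi_{k}'(0)z+\widehat\Psi_{k}$ (the last term vanishing to order $\ge 2$ at $0$), the order--$0$ part yields $e_{k+1}=|\lambda|^{-N}|\Psi_{k}(0)|\le|\lambda|^{-N}\bigl(|\lambda-1|\,e_{k}+O(\delta_{k}^{2})\bigr)$, the order--$1$ part contributes $O(\delta_{k}^{2})$ to $\delta_{k+1}$, and --- using $\|\widehat\Psi_{k}\|_{t}=O(\delta_{k}\,t^{2})$, from the Schwarz lemma applied to $\widehat\Psi_{k}/z^{2}$ --- the higher--order part contributes $|\lambda|^{-N}\cdot O\bigl(\delta_{k}\,(|\lambda|^{N})^{2}\bigr)=O(|\lambda|^{N}\delta_{k})$. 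Choosing first $C_{0}$ large enough that $O(|\lambda|^{N})\le O(e^{-C_{0}})$ is $\le\tfrac18$, and then $\varepsilon_{0}$ small enough that $|\lambda|^{-N}|\lambda-1|\le 2e^{C_{0}}\varepsilon_{0}$ and the remaining $O(\varepsilon_{0})$--terms are $\le\tfrac18$, one gets $e_{k+1}\le\tfrac14(e_{k}+\delta_{k})$ and $\delta_{k+1}\le\tfrac18 e_{k}+\tfrac38\delta_{k}$. Hence $M_{k}:=\max(e_{k},\delta_{k})$ satisfies $M_{k+1}\le\tfrac12 M_{k}$, and since $M_{0}\le\delta_{0}\le\varepsilon_{0}$ we conclude $M_{k}\to 0$, i.e.\ $g_{k}\to\id$ uniformly on $\mathbb D_{1/3}$.

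For the non--triviality clause, suppose $g_{k+1}=\id$ for some $k$; I would argue by descending induction, using that $\lambda$ has modulus $<1$, hence is not a root of unity, so $n\mapsto\lambda^{1-n}$ is injective. Then $[f,g_{k}]=\id$, i.e.\ $g_{k}$ commutes with $f$, which on Taylor coefficients ($\lambda a_{n}=\lambda^{n}a_{n}$) forces $g_{k}(z)=cz$ to be a homothety. More generally, if $g_{j+1}$ is a homothety then $[f,g_{j}]$ is a homothety $L_{c}$, and the relation $f\circ g_{j}\circ f^{-1}=L_{c}\circ g_{j}$ reads $\lambda^{1-n}a_{n}=c\,a_{n}$ on the coefficients of $g_{j}$; by injectivity of $n\mapsto\lambda^{1-n}$ at most one index survives, so $g_{j}$ is a monomial, hence --- being injective and close to the identity --- again a homothety. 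Descending to $j=0$ shows $g$ is a homothety, in particular affine, contradicting the hypothesis. (In the remaining, \emph{excluded} case $g(z)=cz$ one has $[f,g]=\id$, so indeed $g_{1}=\id$; the scheme makes no claim there.)

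The main obstacle is the competing pressure on the choice of $N$: it must be taken large so that conjugation by $f^{N}$ damps the higher Taylor coefficients of the commutator --- which $[f,\cdot\,]$, with $f$ ``hyperbolic'' of ratio $|\lambda|<1$, mildly inflates --- yet that same factor $|\lambda|^{-N}$ inflates the order--$0$ coefficient, i.e.\ the failure of $g_{k}$ to fix $0$, and there is no useful upper bound on $N$ to prevent this. What makes the scheme close is that this order--$0$ defect is not merely bounded but \emph{contracts} geometrically, precisely because the commutator feeds it the small factor $(\lambda-1)$; one must therefore iterate $\delta_{k}$ and $e_{k}$ \emph{simultaneously}, and the delicate point is arranging the two absolute constants $C_{0}$ (large) and $\varepsilon_{0}$ (small) so that every estimate in the recursion closes at once.
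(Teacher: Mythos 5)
The paper itself does not prove this lemma: it is quoted verbatim from Loray--Rebelo (\cite[Lemma~3.3]{LR}) and taken as a black box, so there is no ``paper's own proof'' to compare against here.

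Your reconstruction is correct and follows the intended KAM-style renormalization. The three ingredients you isolate are the right ones: the exact conjugation identity, which contracts every Taylor coefficient of $h-\id$ of order $n\ge2$ by $|\lambda|^{N(n-1)}$, fixes the $n=1$ coefficient, and inflates the $n=0$ coefficient by $|\lambda|^{-N}$; the commutator expansion $[f,h]-\id=-\psi+\lambda\,\psi(\lambda^{-1}\cdot)+O(\|\psi\|^{2})$, whose linear part $\sum a_n(\lambda^{1-n}-1)z^n$ kills the coefficient at $n=1$ and puts a factor $(\lambda-1)$ on $n=0$; and the Schwarz-lemma control $\|\widehat\Psi_k\|_t=O(\delta_k t^2)$ of the quadratic-and-higher tail. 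The coupled recursion in $\delta_k=\|g_k-\id\|$ and $e_k=|g_k(0)|$ then closes precisely because the inflation $|\lambda|^{-N}\le 2e^{C_0}$ on the constant term is offset by the factor $|\lambda-1|\le\varepsilon_0$, while $|\lambda|^{N}\le e^{-C_0}$ kills the tail; choosing $C_0$ first and $\varepsilon_0$ second is exactly the right order of quantifiers. The non-triviality descent is also sound: the relation $f\circ g_j\circ f^{-1}=L_c\circ g_j$ gives $\lambda^{1-n}a_n=c\,a_n$ on coefficients, injectivity of $n\mapsto\lambda^{1-n}$ (valid since $|\lambda|<1$ prevents $\lambda$ from being a root of unity) leaves at most one nonzero $a_n$, and injectivity of $g_j$ near $0$ forces the surviving monomial to be linear. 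Your descent actually proves the sharper statement that $g_k=\id$ for some $k$ if and only if $g$ is a homothety, which is stronger than the ``$g$ affine'' in the lemma's statement; this is consistent, since a translation $g(z)=z+d$ with $d\neq0$ produces $[f,g](z)=z+(\lambda-1)d\neq\id$ and the subsequent $g_k$ decay geometrically without ever vanishing. One stylistic remark: rather than tracking the (genuinely very large) natural domains after the first conjugation, it is cleaner to restrict every $g_k$ to a fixed disc, say $\mathbb{D}_{1/2}$, throughout the induction; the Cauchy and Schwarz estimates you invoke then have uniform constants.
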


In what follows, we introduce the set $\mathcal U_d$ of degree $d$ plane algebraic foliations, whose pseudo-group contains two maps $f$ and $g$ defined on some transversal set $B\simeq \mathbb D \subset B'\simeq \mathbb C$ as in Lemma~\ref{l:LorayRebelo}, and such that moreover:
\begin{itemize}
\item $\lambda$ is not real,
\item $g(0)$ is different from $0$,
\item $g$ is not affine in the linearization coordinate of $f$,
\item there exist holonomy tubes carrying the maps $f$ and $g$,
\item the singularities are of hyperbolic type, and the foliation do not carry any invariant algebraic curve. In particular, the leaves are hyperbolic, and the Poincar\'e metric on the leaves is continuous.
\end{itemize}

All the above conditions are open (there is a little work to see that after perturbation $g$ is still not affine in the linearization coordinate of $f$; this is due to the fact that this coordinate depends continuously on $f$); hence $\mathcal U_d$ is an open set. Loray/Rebelo~\cite{LR} proved that this set is not empty. The idea is to construct a particular foliation satisfying all the conditions but the last one, and to perturb it so that the singularities become hyperbolic, and no algebraic curve is invariant. Such foliations are for instance the Riccati foliations with dense monodromy group: the reader can check that they satisfy all the conditions, appart the last one.

At this point, an important thing to observe is that, if a foliation belong to $\mathcal U_d$, the corresponding maps $g_k$ defined on $\mathbb D_{1/3}$ are carried by holonomy tubes for every $k$. These tubes are obtained by concatenating the tubes of the maps $f$, $g$, $f^{-1}$, $g^{-1}$.

Next, we review another fundamental result proved in Loray/Rebelo's paper: the existence of flows in the closure of the pseudo-group of a foliation in $\mathcal U_d$. Here is the precise statement (see~\cite[Corollary 4.2]{LR}):

\begin{lemma} \label{l:flows}
Let $\mathcal F$ be a foliation belonging to $\mathcal U_d$ and $\mathcal G$ be the holonomy pseudo-group of $\mathcal F$. Then there is a transversal $B$ isomorphic to the unit disc, and two holomorphic vector fields $X_1, X_2$ defined on $\mathbb D$, such that $X_1(0)$ and $X_2(0)$ are independent over the real numbers, and such that for every small enough real number $t$, the map $\exp (t X_i)$ is defined on the disc $\mathbb D_ {1/2}$ (with values in $\mathbb D$) and is a uniform limit of a sequence $k_n$ of elements of $\mathcal G$ on $\mathbb D_{1/2}$ (in particular the domain of definition of $k_n$ contains $\mathbb D_{1/2}$ for every $n$). Moreover, the maps $k_n$ are associated to holonomy tubes.
\end{lemma}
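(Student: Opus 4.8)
The plan is to reduce the statement to the techniques of Loray--Rebelo and then to keep track of the tube structure along the way. By the very definition of $\mathcal U_d$, the holonomy pseudo-group $\mathcal G$ contains two maps $f,g$ defined on a transversal $B'\simeq\mathbb C$ with $B\Subset B'$, where in a suitable coordinate $f(z)=\lambda z$ with $|\lambda-1|\le\varepsilon_0$, $\lambda\notin\mathbb R$, $g$ is $\varepsilon_0$-close to the identity, $g(0)\ne 0$, $g$ is not affine in the linearizing coordinate of $f$, and both $f$ and $g$ are carried by holonomy tubes. First I would apply Lemma~\ref{l:LorayRebelo} to this pair, obtaining a sequence $g_k$ defined on $\mathbb D_{1/3}$, converging uniformly to $\id$, with $g_k\ne\id$ for every $k$ (it is the non-affineness of $g$ that prevents the $g_k$ from collapsing to $\id$).

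Next I would extract a nontrivial holomorphic vector field from $(g_k)$, following \cite[Corollary 4.2]{LR}. Write $g_k=\id+a_k$ with $\|a_k\|\to 0$ on a slightly smaller disc; the renormalized vector fields $a_k/\|a_k\|$ form a normal family, so after passing to a subsequence they converge to a nonzero holomorphic vector field $X$, and the closure of $\mathcal G$ contains the time-$t$ map $\exp(tX)$ for every sufficiently small real $t$, realized as a uniform limit of the iterates $g_k^{\lfloor t/\|a_k\|\rfloor}$. Here one must verify, using $|\lambda|<1$ to keep the dynamics near the fixed point, that these iterates remain defined on $\mathbb D_{1/2}$ with values in $\mathbb D$; this uniform control of domains of definition is the delicate point of the Loray--Rebelo argument and is where I expect the real work to lie. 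This yields $X_1:=X$ with a flow in $\overline{\mathcal G}$. To obtain a second, real-independent vector field I would conjugate by powers of $f$: the fields $(f^{-m})_*X_1$ are again limits of elements of $\mathcal G$ (conjugate the approximating sequence by $f^{\pm m}$), and because $\lambda\notin\mathbb R$ the phases of $\lambda^{-m}$, which govern the direction of the leading part of $(f^{-m})_*X_1$ at $0$, are dense in the circle; choosing $m$ suitably produces $X_2\in\overline{\mathcal G}$ with $\exp(tX_2)$ defined on $\mathbb D_{1/2}$ and $X_2(0)$ independent of $X_1(0)$ over $\mathbb R$. (If $X_1(0)$ were zero one first replaces $X_1$ by an appropriate $(f^{-m})_*$-translate; the hypothesis $g(0)\ne 0$ ensures the relevant coefficient can be arranged nonzero.)

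Finally I would check the tube assertion, which costs nothing essentially new. By its recursive definition in Lemma~\ref{l:LorayRebelo}, each $g_k$ is a finite word in $f^{\pm1}$ and $g^{\pm1}$; since $f$ and $g$ are carried by holonomy tubes and the inverse of a tube is a tube, concatenating these tubes along compatible intermediate transversals (legitimate as long as the successive images stay within the domains where the tubes are defined) exhibits each $g_k$ as carried by a holonomy tube. The iterates $g_k^{\lfloor t/\|a_k\|\rfloor}$ and the conjugates $f^{-m}g_k^{(\cdot)}f^{m}$ are then again finite concatenations of such tubes, so each approximating map $k_n$ in the statement is carried by a holonomy tube. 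The main obstacle, as noted, is the bookkeeping of domains: one must ensure that throughout the commutator–iteration–conjugation process the maps stay defined on $\mathbb D_{1/2}$ with values in $\mathbb D$, so that both the analytic limits and the concatenated tubes make sense; this is exactly the content of the quantitative estimates in \cite{LR}, which I would invoke rather than redo.
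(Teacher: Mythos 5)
Your proposal is essentially the paper's argument, and it is correct in substance. The paper obtains $X_1$ with $X_1(0)\neq 0$ by citing \cite[Corollary~4.2]{LR} directly (which you partially unpack via the renormalization/normal-families argument), then simply takes $X_2 = f_* X_1$; since $f(z)=\lambda z$ with $\lambda\notin\mathbb R$, we have $X_2(0)=\lambda X_1(0)$, automatically $\mathbb R$-independent from $X_1(0)$. The tube assertion is handled exactly as you describe: each approximating map is a finite word in $f^{\pm1}$ and $g^{\pm1}$ (the paper notes they are of the form $f^{-N_k}\circ g_k\circ f^{N_k}$), and concatenating the corresponding tubes gives the conclusion.

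One minor correction to your write-up: the claim that "the phases of $\lambda^{-m}$ \ldots are dense in the circle" is not correct in general --- it fails when $\arg\lambda$ is a rational multiple of $2\pi$. Fortunately you do not need density: $\lambda\notin\mathbb R$ already makes $m=1$ work, since $\lambda^{-1}X_1(0)$ and $X_1(0)$ are then $\mathbb R$-independent. That is the choice the paper makes. Also, your hedge in case $X_1(0)=0$ is unnecessary: Corollary~4.2 of \cite{LR} already produces $X_1$ with $X_1(0)\neq 0$, and the paper uses this directly.
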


Remark that Loray/Rebelo's Corollary 4.2 states that there is at least one vector field $X_1$ satisfying Lemma~\ref{l:flows} such that $X_1(0) \neq 0$. Then, by taking $X_2 = f_* X_1$, we get another vector field, and these two verify lemma~\ref{l:flows}. Notice also that the maps $k_n$'s are carried by holonomy tubes. Indeed, they are constructed by considering compositions of the form $f^{- N_k} \circ g_k \circ f^{N_k}$, for some integer $N_k$, where $g_k$ are the maps constructed in Lemma~\ref{l:LorayRebelo}. These maps are as above carried by holonomy tubes: the concatenations of the holonomy tubes of the maps $f$, $f^{-1}$ and $g_k$'s.

 The following lemma (together with the sketched construction above) finishes the proof:

\begin{lemma}\label{lem:dense semi-group}
Let $\mathcal F$ be a foliation as in Lemma~\ref{l:flows}. Then there exists a finite number of holonomy maps  $h_1, \ldots, h_l$ from a transversal disc $B \simeq \mathbb D$ to itself, carried by holonomy tubes, such that the limit set of the action of the semi-group $H$ generated by the $h_i$'s on $\mathbb D$ has nonempty interior. Moreover the $h_i$'s can be chosen to be contracting, i.e. there is a constant $0<c<1$ such that for every $i= 1,\ldots, l$ and $x\in \mathbb D$, $| h_i' (x) | <c$.
\end{lemma}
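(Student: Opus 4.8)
The plan is to build the $h_i$ as compositions $h_i=\psi_i\circ f^N$, where $f$ is the strong contraction coming with the definition of $\mathcal{U}_d$ and $\psi_i$ is an honest holonomy map approximating a short word in the time-$t$ maps of the two flows of Lemma~\ref{l:flows}; for $N$ large this produces a finite iterated function system whose pieces \emph{cover} a fixed sub-disc, and that forces the limit set to have interior. Concretely: write $f$ for the map $z\mapsto\lambda z$ with $|\lambda|<1$ from the definition of $\mathcal{U}_d$ (obtained via Lemma~\ref{l:LorayRebelo}), which is carried by a holonomy tube, and let $X_1,X_2$ and $B\simeq\mathbb{D}$ be the two vector fields and the transversal furnished by Lemma~\ref{l:flows}; after shrinking $B$ I may assume that $B$ is a round disc $\mathbb{D}_{r_0}$ in the linearizing coordinate of $f$, so that $f$ acts on $B$ by $z\mapsto\lambda z$ and is still carried by a holonomy tube. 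Since $X_1(0),X_2(0)$ are linearly independent over $\mathbb{R}$, the real-analytic map $(t,s)\mapsto\exp(tX_1)\bigl(\exp(sX_2)(0)\bigr)$ has invertible differential at $(0,0)$, hence restricts to a diffeomorphism from a neighbourhood of $(0,0)$ onto a round disc $\mathbb{D}_\eta\subset B$; write $z\mapsto(t(z),s(z))$ for the inverse, with $\eta$ small enough that all the parameters $t(z),s(z)$, $z\in\mathbb{D}_\eta$, lie in the range where Lemma~\ref{l:flows} applies. Over the resulting compact family of maps $\exp(tX_1)\circ\exp(sX_2)$ one has uniform bounds $1/M\le|D(\cdot)|\le M$ and a uniform quadratic Taylor estimate on a fixed disc about $0$.

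Next I would fix the integer $N$ so large that $M|\lambda|^N<1$ (this already makes the $h_i$ contracting) and that the quadratic Taylor error, evaluated on the disc $f^N(B)$ — which is $B$ scaled by $|\lambda|^N$ — is negligible compared with $|\lambda|^N$. Then I pick a finite $a|\lambda|^N$-net $z_1,\dots,z_l$ of $\overline{\mathbb{D}_{\eta/2}}$, for a suitable constant $a=a(M)>0$. For each $j$, Lemma~\ref{l:flows} allows me to choose honest holonomy maps carried by holonomy tubes approximating $\exp(t(z_j)X_1)$ and $\exp(s(z_j)X_2)$ uniformly to within a small $\varepsilon$ (to be chosen after $N$); their composition $\psi_j$ is again a holonomy map carried by a concatenated holonomy tube, and I set
\[ h_j:=\psi_j\circ f^N . \]
Each $h_j$ is then a holonomy map carried by a holonomy tube (concatenate the tube of $\psi_j$ with $N$ copies of the tube of $f$, as is done repeatedly in this section), is defined on $\overline{B}$, and satisfies $|h_j'|=\bigl(|\psi_j'|\circ f^N\bigr)\cdot|\lambda|^N\le c:=(M+\varepsilon)|\lambda|^N<1$ throughout $\overline{B}$.

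The geometric heart is then a covering estimate. Because $\psi_j$ is $C^1$-close (uniformly, by Cauchy estimates) to $\exp(t(z_j)X_1)\circ\exp(s(z_j)X_2)$, which sends $0$ to $z_j$ with derivative of modulus in $[1/M,M]$, and because $f^N(\overline{\mathbb{D}_\rho})$ is exactly $\overline{\mathbb{D}_{|\lambda|^N\rho}}$, for $N$ large and $\varepsilon$ small the piece $h_j(\overline{\mathbb{D}_{\eta/2}})=\psi_j\bigl(\overline{\mathbb{D}_{|\lambda|^N\eta/2}}\bigr)$ contains the disc of radius $a|\lambda|^N$ about $z_j$ and is contained in the disc of radius $\eta/4$ about $z_j$. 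With the net spacing $\le a|\lambda|^N$ this gives $\bigcup_j h_j(\overline{\mathbb{D}_{\eta/2}})\supseteq\overline{\mathbb{D}_{\eta/2}}$, while $\bigcup_j h_j(\overline{B})$ lies within distance $\eta/4$ of $\overline{\mathbb{D}_{\eta/2}}$, hence inside $B$, so $\overline{B}$ is forward invariant for the system $\{h_1,\dots,h_l\}$. An immediate induction on word length then yields $\bigcup_{|I|=n}h_I(\overline{B})\supseteq\overline{\mathbb{D}_{\eta/2}}$ for every $n$, and since this sequence of compacta is decreasing by forward invariance, the limit set $\Lambda_H=\bigcap_n\bigcup_{|I|=n}h_I(\overline{B})$ — which coincides with the set $\bigcup_{(i_n)}\bigcap_n h_{i_1}\circ\cdots\circ h_{i_n}(B)$ of the statement, each $h_i$ being a contraction so that the nested intersections are singletons matched to the two descriptions by the usual compactness argument — contains $\overline{\mathbb{D}_{\eta/2}}$ and thus has nonempty interior. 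Feeding this $H$ into the variant of the proof of Theorem~\ref{thm:cantor} indicated at the beginning of this subsection (an $l$-adic tree in place of the dyadic one, a bouquet of $l$ circles, the adapted Lemma~\ref{lem:approx}) then produces an admissible germ from $B$, hence from a line, to an algebraic curve for which every point of $\Lambda_H$ — in particular every point of the open set $\mathbb{D}_{\eta/2}$ — is a singularity for the analytic continuation.

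I expect the main difficulty to be precisely the interface between the holonomy pseudo-group and the flows: the maps $\exp(tX_i)$ are only uniform limits of holonomy maps, not holonomy maps, so the whole construction must be carried out with genuine nearby holonomy maps $\psi_j$, and all the metric estimates — the covering inclusion, the forward invariance, and the contraction bound — must survive a perturbation that is small relative to the already small scale $|\lambda|^N$. Organising the chain of dependencies ``fix $N$ large, then $\varepsilon$ small'', together with checking that the concatenated tubes genuinely carry the $h_j$ and that the transversal of Lemma~\ref{l:flows} may be taken to be a linearizing disc for $f$, is where the real care is needed; the covering estimate itself, once the scales are set, is elementary.
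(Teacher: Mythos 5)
Your proposal is correct and takes essentially the same route as the paper: both constructions set $h_i$ equal to a composition of a high iterate $f^k$ (or $f^N$) of the linearizable contraction with an honest holonomy approximation of a two-parameter flow word from Lemma~\ref{l:flows}, and both use compactness of a fixed small disc to extract a finite family whose images cover that disc, so that every point admits an infinite backward itinerary and hence lies in $\Lambda_H$. The only cosmetic difference is that the paper parametrizes by the source point $p$ and works with $g_i \approx \exp(t_1X_1)\exp(t_2X_2)$ sending $p$ near $0$, defining $h_i=g_i^{-1}\circ f^k$, whereas you parametrize by the target $z_j$ and use $\psi_j$ sending $0$ near $z_j$, defining $h_j=\psi_j\circ f^N$; these are two ways of stating the same covering/contraction argument.
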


\begin{proof} Let $h_1, \ldots h_l : \mathbb D \rightarrow \mathbb D$ be some maps and $H$ the semi-group generated by these maps. Suppose that there is a point $p$ in $\mathbb D$, and an infinite sequence of indices $i_1, i_2, ..., i_n, ...$ each of them belonging to $\{1,...,l\}$ such that we can define the sequence of points $p_1= h_{i_1}^{-1}(p)$ and for every $n>1$, $p_n = h_{i_n}^{-1} (p_{n-1})$. If we suppose further that for every $i= 1,\ldots, l$ and $x\in \mathbb D$, $| h_i' (x) | <c<1$ then actually $p=\cap_{n\geq 1}h_{i_1}\circ\cdots\circ h_{i_n}(\mathbb D)$. By definition $p$ is a limit point of the semigroup. Our goal will be to find such maps $h_1, ... , h_l$ in the holonomy pseudo-group $\mathcal G$ of $\mathcal F$, defined on $\mathbb{D}$, and such that for every point in a nonempty open set~$U$ of~$\mathbb{D}$ we can find a sequence $i_1,... , i_n, ..$ as above.

Observe that because $X_1$ and $X_2$ are not collinear at $0$, there is a neighborhood of~$0$, say $\mathbb D_{r}$ with $r>0$, and a positive real number $ \tau >0$, such that for every point $p$ of $\overline{\mathbb D _{r}}$, there are unique real numbers $t_1, t_2$ such that $|t_1|, \ |t_2 | < \tau$ and $0 = \exp ( t_1 X_1) \exp (t_2 X_2) (p)$. Moreover, we can suppose that the composition $\exp( t_1 X_1 ) \exp(t_2 X_2)$, and its inverse $\exp( -t_2 X_2 ) \exp (-t_1 X_1)$, are defined on $\mathbb D _{1/4}$ with values in $\mathbb D$, for every $|t_1|,|t_2|\ < \tau$.

We fix some constants. Denote $M>0$ an upper bound for all the derivatives
\[ |\big( \exp (-t_2 X_2) \exp (-t_1 X_1) \big) '(p)|\]
for every point $p$ in the ball of radius $1/4$ and every $|t_1|,|t_2|\leq \tau$. Let $k$ be an integer such that $ |\lambda |^k < \min (r, 1/8, 1/2M)$.

Given $p\in \overline{\mathbb D_{r}}$, we approximate the maps $\exp(t_j X_j)$, $j=1,2$ on $\mathbb D_{1/2}$ by elements $k_j$ of the pseudo-group defined on $\mathbb D_{1/2}$, so that the map $g= k_1 k_2$ is defined on $\mathbb D_{1/8}$ with derivative bounded by $2M$, and $g(p)$ belongs to the disc $\mathbb D _{|\lambda|^k r}$. If $k_j$ is sufficiently close to $\exp(t_j X_j)$, we will also get that $g^{-1}$ is defined on $\mathbb D_{1/8}$, and that its derivative is bounded by $2M$.

By compactness of $\overline{\mathbb D _r}$, we can find a finite number of elements $g_1,\ldots, g_l$ of $\mathcal G$ defined on $\mathbb D_{1/8}$, such that for every point $p$ in $\mathbb D _ {r}$, there is an index $i\in\{1,\ldots ,l\}$ such that $g_i (p) $ belongs to $\mathbb D _{|\lambda|^k r}$. Moreover, the map $g_i$'s, together with their inverses, are defined on $\mathbb D_{1/8}$ with derivatives bounded by $2M$.

Consider the maps $h_i = g_i ^{-1} \circ f^k$, for $i= 1,\ldots , l$. Because $|\lambda |^k < 1/8$, $h_i$ is a map from $\mathbb D $ to itself. Moreover, the derivative of $h_i$ is uniformly bounded by $2M |\lambda|^k <1$. We have $h_i^{-1} = f^{-k} \circ g_i$, i.e. $h_i^{-1} = \lambda^{-k} g_i^{-1}$. Since for every $p\in U = \mathbb D_{r}$ there is an index $i\in\{1,\ldots ,l\}$ such that $g_i (p) $ belongs to $\mathbb D_{|\lambda|^k r}$ we have that $h_i^{-1} (p) $ belongs to $U$. We define $i_1:=i$ and repeat the argument for $h_{i_1}^{-1} (p)$. By induction we to get our sequence $i_1,\ldots, i_n, \ldots$. The proof is complete. \end{proof}

\begin{small}

\vspace{0.3cm}

G. Calsamiglia,
Instituto de Matem\' atica, Universidade Federal Fluminense, Rua M\'ario Santos Braga s/n, 24020-140, Niter\'oi, Brazil,
gabriel@mat.uff.br

\vspace{0.3cm}

B. Deroin,
D\'epartement de Math\'ematiques d'Orsay, Universit\'e Paris 11, 91405 Orsay Cedex, bertrand.deroin@math.u-psud.fr

\vspace{0.3cm}

S. Frankel,
sid1frankel$<at>$gmail.com,   (and frankel$<at>$math.jussieu.fr, spring 2010 )

\vspace{0.3cm}

A. Guillot,
Instituto de Matem\'{a}ticas, Unidad Cuernavaca, Universidad Nacional Aut\'{o}noma de M\'{e}xico, A.P.~273-3 Admon.~3, Cuernavaca, Morelos, 62251 Mexico,
adolfo@matcuer.unam.mx

\end{small}

\end{document}